\definecolor{darkgreen}{rgb}{0,0.45,0}
\setlist{leftmargin=8mm}
\newcommand{\id}{\mathrm{id}}
\newcommand{\ca}{\mathcal{A}}
\newcommand{\cb}{\mathcal{B}}
\newcommand{\cc}{\mathcal{C}}
\newcommand{\ce}{\mathcal{E}}
\newcommand{\cf}{\mathcal{F}}
\newcommand{\cg}{\mathcal{G}}
\newcommand{\ct}{\mathcal{T}}
\newcommand{\cp}{\mathcal{P}}
\newcommand{\cq}{\mathcal{Q}}
\newcommand{\cx}{\mathcal{X}}
\newcommand{\cy}{\mathcal{Y}}
\newcommand{\bgam}{\Gamma}
\newcommand{\blam}{\Lambda}
\newcommand{\N}{{\mathbb N}}
\newcommand{\R}{{\mathbb R}}
\newcommand{\Z}{{\mathbb Z}}
\newcommand{\K}{\mathbb{k}}
\newcommand{\supp}{\mathrm{supp}}
\DeclareMathOperator*{\coker}{coker}
\DeclareMathOperator{\Hom}{Hom}
\DeclareMathOperator{\End}{End}
\DeclareMathOperator{\fmod}{mod}
\newcommand{\kvec}{{\mathrm{vec}_\K}}
\newcommand{\kVec}{{\mathrm{Vec}_\K}}
\newcommand{\Ab}{\mathrm{Ab}}
\DeclareMathOperator{\rep}{rep}
\newcommand{\repp}{{\rep \cp}}
\newcommand{\repq}{{\rep \cq}}
\newcommand{\op}{\mathrm{op}}
\newcommand{\proj}[1]{{\mathrm{proj} (#1)}}
\newcommand{\inj}[1]{{\mathrm{inj} (#1)}}
\newcommand{\add}[1]{{\mathrm{add} (#1)}}
\DeclareMathOperator{\spread}{sprd}
\newcommand{\addspread}[1]{\add{\spread #1}}
\newcommand{\acx}{{\mathrm{add}(\mathcal{X})}}
\newcommand{\acy}{{\mathrm{add}(\mathcal{Y})}}
\DeclareMathOperator{\rad}{rad}
\newcommand{\gldim}[2]{\mathrm{gl.dim}_{#1}\!\left(#2\right)}
\newcommand{\pdim}[2]{\mathrm{p.dim}_{#1}\!\left(#2\right)}
\newcommand{\floor}[1]{{\lfloor #1 \rfloor}}
\newcommand{\Ran}{\mathsf{Ran}}
\newcommand{\Lan}{\mathsf{Lan}}
\newcommand{\res}{\mathsf{Res}}
\DeclareMathOperator{\pad}{pad}
\DeclareMathOperator*{\colim}{colim}
\newcommand{\onemorphism}{\mathbb{1}}
\DeclareMathOperator{\cover}{cov}
\DeclareMathOperator{\cocover}{cocov}
\DeclareMathOperator{\incomp}{incomp}
\DeclareMathOperator{\minimal}{min}
\DeclareMathOperator{\maximal}{max}
\newcommand{\ds}[1]{{{\downarrow}#1}}
\newcommand{\us}[1]{{{\uparrow}#1}}
\newcommand{\iprod}[2]{{\langle #1, #2 \rangle}}
\newcommand{\dsminus}[2]{{{\downarrow}#1 \setminus {\downarrow}#2}}% > D, x>
\newcommand{\usminus}[2]{{{\uparrow}#1 \setminus {\uparrow}#2}}
\renewcommand{\phi}{\varphi}
\renewcommand{\epsilon}{\varepsilon}
\theoremstyle{plain}
\newtheorem{thm}{Theorem}[section]
\newtheorem{cor}[thm]{Corollary}
\newtheorem{prop}[thm]{Proposition}
\newtheorem{lem}[thm]{Lemma}
\newtheorem{question}{Question}
\theoremstyle{definition}
\newtheorem{defn}[thm]{Definition}
\newtheorem{eg}[thm]{Example}
\newtheorem{rem}[thm]{Remark}
\newtheorem{notation}[thm]{Notation}
\newtheorem{assumptions}[thm]{Assumptions}
\newtheorem{theoremx}{Theorem}
\newtheorem{corollaryx}[theoremx]{Corollary}
\newtheorem{conjecture}{Conjecture}
\Crefname{prop}{Proposition}{Propositions}
\Crefname{cor}{Corollary}{Corollaries}
\Crefname{lem}{Lemma}{Lemmas}
\Crefname{thm}{Theorem}{Theorems}
\Crefname{defn}{Definition}{Definitions}
\Crefname{rem}{Remark}{Remarks}
\Crefname{eg}{Example}{Examples}
\Crefname{constr}{Construction}{Constructions}
\Crefname{assumptions}{Assumptions}{Assumptions}
\Crefname{corollaryx}{Corollary}{Corollaries}
\Crefname{theoremx}{Theorem}{Theorems}
\def\noteson{%
\gdef\luis##1{\noindent{\color{blue}[Luis: ##1]}}%
\gdef\justin##1{\noindent{\color{red}[Justin: ##1]}}%
\gdef\ben##1{\noindent{\color{darkgreen}[Ben: ##1]}}%
\gdef\eric##1{\noindent{\color{orange}[Eric: ##1]}}%
\gdef\todo##1{\noindent{\color{violet}[to do: ##1]}}}
\title{Stabilization of the spread-global dimension}
\author{Benjamin Blanchette}
\address{Département de mathématiques, Université de Sherbrooke, Québec, Canada}
\author{Justin Desrochers}
\address{Département de mathématiques, Université de Sherbrooke, Québec, Canada}
\author{Eric J. Hanson}
\address{Department of Mathematics, North Carolina State University, Raleigh, NC 27695, USA}
\author{Luis Scoccola}
\address{Centre de Recherches Mathématiques et Institut des sciences mathématiques;
Laboratoire de combinatoire et d'informatique mathématique de l'Université du Québec à Montréal;
Université de Sherbrooke; Québec, Canada}
\begin{document}

\maketitle

\begin{abstract}
Motivated by constructions from applied topology, there has been recent interest in the homological algebra of linear representations of posets, particularly in the context of homological algebra relative to non-standard exact structures.
A prominent example is the spread exact structure on the category of representations of a fixed poset, in which the indecomposable projectives are the spread representations (that is, the indicator representations of convex and connected subsets).
The spread-global dimension is known to be finite for finite posets and not uniformly bounded on the collection of all Cartesian products between two arbitrary finite total orders.
It was conjectured in [AENY23] that the spread-global dimension is uniformly bounded on the collection of all Cartesian products between a fixed finite total order and an arbitrary finite total order.
We provide a positive answer to this conjecture and, more generally, prove that the spread-global dimension is uniformly bounded on the collection of all Cartesian products between a fixed finite poset and an arbitrary finite total order.
In doing so, we also establish the existence of finite spread-resolutions for finitely presented representations of arbitrary grid posets.
\end{abstract}

\section{Introduction}

\subsection{Context and problem statement}
Let $\K$ be any field, fixed throughout this paper.

\smallskip\noindent\textbf{Poset representations.}
A linear representation of a poset $\cp$ is a functor $\cp \to \kVec$.
The study of linear representations of posets has a long history.
Of particular importance is the work of Nazarova and Roiter on matrix representations of posets~\cite{nazarova-roiter}, leading to the solution to the second Brauer--Thrall conjecture for finite-dimensional algebras \cite{nazarova-roiter-2},
and the work of Gabriel \cite{gabriel} on quivers of finite type, which relies on filtered representations of posets.
Both matrix representations and filtered representations are special cases of linear representations of posets in the above sense; for a treatment from this perspective, see \cite{simson}.
The classification of finite posets according to their representation type, which is significantly more involved than the case of quivers, is in
\cite{loupias,zavadskij,drozdowski,leszczynski1,leszczynski2}.
The homological properties of poset representations, such as global dimension, are also of interest \cite{mitchell-2,mitchell-3,baclawski,changchang}, and these show up, for example, in connection with standard cohomology of simplicial complexes \cite{gerstenhaber-schack,igusa-zacharia}.
%For $\cp$ a finite poset, the category of linear representations of $\cp$ is equivalent to the category of modules over the incidence algebra $\K \cp$.

\smallskip\noindent\textbf{Poset representations in applied topology.}
More recently, there has been interest in the study of poset representations coming from \emph{persistence theory} \cite{oudot,botnan-lesnick}, an area grown out of applied topology and Morse theory.
In this context, linear representations of a poset $\cp$ show up as the homology with field coefficients of $\cp$-filtered topological spaces, that is, families of topological spaces $\{X_p\}_{p \in \cp}$ with $X_p \subseteq X_q$ when $p \leq q$, as in Morse theory \cite{bauer-medina-schmahl}.
When the poset $\cp$ is not a total order, its category of representations is typically of wild representation type, meaning that its indecomposable representations cannot be classified effectively, and that these representations ought to be studied by means of incomplete invariants.
A recent line of work in persistence theory uses invariants derived from homological constructions, and, in particular, classical Betti tables \cite{lesnick-wright,OS24} and Betti tables with respect to non-standard exact structures \cite{boo,AENY23,BBH2022,botnan-oppermann-oudot-scoccola,
chacholski-guidolin-ren-scolamiero-tombari,asashiba24}.
% in the sense of \cite{draxler-reiten-smalo-solberg}.

\smallskip\noindent\textbf{Exact structures.}
An \emph{exact structure} on an Abelian category $\ca$ consists of a class of short exact sequences of $\ca$ satisfying certain conditions \cite{draxler-reiten-smalo-solberg}.
An exact structure~$\ce$ defines a notion of $\ce$-exactness, and hence notions of $\ce$-projective object, $\ce$-projective resolution, and $\ce$-global dimension,
and is useful in a variety of settings
\cite{hochschild,eilenber-moore,auslander-solberg,draxler-reiten-smalo-solberg}.
In this paper, we are concerned with exact structures of the form
$\ce = \cf_\cx$, where $\cx$ is a class of objects of $\ca$, and $\cf_\cx$ consists of all short exact sequences of $\ca$ that remain exact after applying $\Hom_\ca(X,-) : \ca \to \Ab$ for every $X \in \cx$.
The class $\cf_\cx$ is an exact structure for every collection of objects $\cx$
\cite[Prop.~1.7]{draxler-reiten-smalo-solberg},
and the following are standard questions about $\cf_\cx$:
\begin{enumerate}
    %\item What are the projective objects of $\cf_\cx$?
    \item[(Q1)] Are the $\cf_\cx$-projective objects the same as the $\cx$-decomposable objects?
    \item[(Q2)] Does every object of $\ca$ admit a finite $\cf_\cx$-projective resolution?
    \item[(Q3)] Is the $\cf_\cx$-global dimension finite?
\end{enumerate}

In persistence theory,
the category $\ca$ is often $\rep \cp$, the category of finitely presented representations of a poset $\cp$,
and the poset $\cp$ is either a finite poset or an upper semilattice, most commonly a grid poset (i.e., a finite Cartesian product of total orders) such as $\R^n$ \cite{botnan-lesnick}.
The exact structures most commonly used are the
\emph{standard exact structure},
where $\cx$ consists of the representable functors,
the \emph{rank exact structure} \cite{boo},
where $\cx$ consists of the representable functors 
together with the quotients of two representable functors,
and the \emph{spread exact structure} \cite{AENY23},
where $\cx$ consists of all \emph{spread representations}, recalled below.

In the case of the standard exact structure on $\rep \cp$ with $\cp$ finite or an upper semilattice, 
there are classical positive answers to the above questions:
(Q1) the projective representations are the direct sums of representable functors,
(Q2) every object admits a finite projective resolution (see \cite[Cor.~10.6]{chacholski-jin-tombari} for the upper semilattice case)
and (Q3) the global dimension is finite when $\cp$ is finite or a grid poset.
In fact, when $\cp = \R^n$, the global dimension is $n$, by Hilbert's syzygy theorem; see also \cite{iyama-marczinzik}, where other lattices are considered.

In the case of the rank exact structure, positive answers to (Q1) and (Q2) are given in \cite{boo}; a positive answer to (Q3) in the case of finite posets is given in \cite{BBH2022}, and in the case of upper semilattices in
\cite{botnan-oppermann-oudot-scoccola,chacholski-guidolin-ren-scolamiero-tombari};
in particular \cite{botnan-oppermann-oudot-scoccola} shows that the global dimension is $2n-2$ when $\cp = \R^n$.

Due to its complexity, less is known about the spread exact structure.
We now describe some recent advances; see also \cite{AENY23,aoki-escolar-tada,BBH2023}.
%, where various useful results about exact structures on categories of representations of infinite posets are proven.

\smallskip\noindent\textbf{The spread exact structure.}
A \emph{spread}%
\footnote{Various references in persistence theory call spreads ``intervals''; we prefer the name spread \cite{BBH2023} since it avoids confusion with the standard notion of interval from the theory of posets and combinatorics.}
of a poset~$\cp$ is a subset $S \subseteq \cp$ that is poset-connected (non-empty and such that any two elements in $S$ are connected by a zigzag of comparable elements in $S$) and poset-convex (for any pair of elements in $S$, the closed segment between them is also in $S$), and its corresponding \emph{spread representation}
%$\K_S : \cp \to \kvec$
is the indicator representation of $S$, that is,
the representation which takes the value $\K$ on $S$ and zero elsewhere, with all morphisms that are not forced to be zero being the identity of $\K$.
Spread representations were considered very early in the representation theory of posets; for example \cite{chaptal} classifies finite posets whose indecomposable representations are spread representations.
%The \emph{spread exact structure} is the exact structure whose projective representations are the direct sums of spread representations.
%; this exact structure does not always exist, but it does for example when~$\cp$ is finite (thanks to \cite[Thm.~1.15]{auslander-solberg} and the fact that, in a finite poset, there are finitely many spreads).

Questions (Q1), (Q2), and (Q3) are essentially open in the case of $\cp$ infinite and $\cf_\cx$ the spread exact structure.
In this setting, and to our knowledge, it is only known that 
(Q3) has a negative answer for $\cp = \R^n$, with $n \geq 2$ \cite[Ex.~7.24(3)]{BBH2023}.
However, a positive answer for (Q2) would still allow one to define invariants using spread-resolutions as done, in the case of a finite poset, in
\cite{BBH2022,AENY23,escolar-kim,brustle-et-al}.

If $\cp$ is finite, then there are finitely many spreads, and thus the projective objects of the spread exact structure are the spread-decomposable representations,
by a standard result \cite[Thm.~1.15]{auslander-solberg}; this gives a positive answer to (Q1) in this case.
Positive answers to (Q2) and (Q3) are given in \cite{AENY23} also for finite posets, but no explicit bound on the global dimension is currently known.
%In this direction, still in the finite poset case,
%it was recently proven that the spread-global dimension is monotonic with respect to inclusions of posets \cite[Thm.~1.2]{aoki-escolar-tada} (whenever $\cq \subseteq \cp$ is a (full) inclusion of posets, the spread-global dimension of $\cq$ is bounded above by the spread-global dimension of $\cp$).
%This is interesting since the standard global dimension does not have this property
%\cite[Sec.~3]{igusa-zacharia}.

The spread-global dimension of a two-dimensional grid poset $\cp = [k] \times [k]$ is not uniformly bounded as $k \to \infty$ \cite[Ex.~7.24(3)]{BBH2023}, where
$[k] = \{0, \dots, k-1\}$ denotes the standard $k$-element totally ordered set.
%\justin{we use $\ct$ for total order in the later sections}
%The following facts have been established in the literature:
%\begin{itemize}
%\item
%The spread exact structure has finite global dimension for every finite poset~\cite{AENY23}.
%\item
%This implies that the spread-global dimension of infinite posets that are standard in persistence theory, such as $\Z^2$ and $\R^2$, cannot be finite, in contrast to the standard and rank-global dimension of such posets.
However, it is conjectured in \cite{AENY23} that, if one of the two dimensions of the two-dimensional grid is kept fixed, then the spread-global dimension stabilizes:

\begin{conjecture}[{\cite[Conj.~4.11]{AENY23}}]
    \label{conjecture:1}
    If $k \geq 4$, then $\gldim{\spread}{ [k] \times [2] } = 2$.
\end{conjecture}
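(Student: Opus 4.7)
The plan is to prove both the upper and lower bounds in $\gldim{\spread}{[k] \times [2]} = 2$ for $k \geq 4$.

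For the lower bound, I would exhibit an explicit representation $M \in \rep([4] \times [2])$ of spread-projective dimension exactly $2$: a non-spread-decomposable $M$ whose first spread-syzygy is again not spread-decomposable. For $k \geq 4$, the extension by zero $\indi{M}$ along the down-set inclusion $\iota \colon [4] \times [2] \hookrightarrow [k] \times [2]$ then has the same spread-projective dimension. The two key facts are: (i) every spread of $[4] \times [2]$ is a spread of $[k] \times [2]$, since convexity and connectedness are preserved by full down-set inclusions; and (ii) the restriction $\iotas$ of a spread representation of $[k] \times [2]$ is the indicator representation of a convex subset of $[4] \times [2]$, hence a direct sum of spread representations. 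Combined with the adjunction $\iota_! \dashv \iotas$, these imply that restricting any spread-resolution of $\indi{M}$ along $\iotas$ yields a spread-resolution of $M$ of the same length, so $\pdim{\spread}{\indi{M}} \geq \pdim{\spread}{M} = 2$.

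For the upper bound, the approach is to apply the paper's main boundedness theorem (for $\cp \times [k]$ with $\cp$ a fixed finite poset) with $\cp = [2]$ and to sharpen the resulting constant to $2$. Since $[2]$ is a total order, every finitely presented representation of $[2]$ is already spread-decomposable, so $\gldim{\spread}{[2]} = 0$, and one expects passing to the Cartesian product with $[k]$ to contribute at most two additional homological degrees. Concretely, for any $M \in \rep([k] \times [2])$, one would: take a spread-cover $P_0 \twoheadrightarrow M$, take a spread-cover $P_1 \twoheadrightarrow \Omega_1 M$ of the first syzygy, and verify that the second syzygy $\Omega_2 M$ is already spread-decomposable. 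Existence of the required covers is supplied by the paper's result on finite spread-resolutions for finitely presented representations of upper semilattices.

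The main obstacle is the termination statement: proving that $\Omega_2 M$ is spread-decomposable for every $M$, uniformly in $k$. This is where the two-row structure enters crucially, via the explicit combinatorial description of spreads in $[k] \times [2]$ as ``staircases'' parametrized by quadruples $a_1 \leq a_0 \leq b_1 \leq b_0$ in $[k]$ (together with the degenerate single-row cases). I would approach this by induction on $k$, using the contraction functor $\contractf \colon \rep([k] \times [2]) \to \rep([k-1] \times [2])$ and its adjoint $\stretchf$ to reduce to a small base case (for instance $k = 4$, which can be verified by direct computation given the finiteness of spreads and indecomposables of $\rep([4] \times [2])$), and showing at each inductive step that the extra column introduced by $\stretchf$ does not create new homological complexity beyond degree $2$.
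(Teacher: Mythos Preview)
Your lower-bound plan is sound but more elaborate than needed. The real gap is in your upper bound: you replace the paper's two-line argument with an induction whose key step is not justified.

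The paper's proof runs as follows. The stabilization theorem (\cref{theorem:stabilization}, which you call the ``main boundedness theorem'') already gives $g_2(k) \leq g_2(1 + 4\cdot 2) = g_2(9)$ for every $k$, and monotonicity of the spread-global dimension \cite[Thm.~1.2]{aoki-escolar-tada} then forces $g_2(k) = g_2(9)$ for all $k \geq 9$. The values $g_2(4) = \cdots = g_2(9) = 2$ are imported directly from \cite[Ex.~4.10]{AENY23}. There is no separate ``sharpening'' argument: once \cref{theorem:stabilization} reduces the problem to the finitely many cases $k \leq 9$, the answer is read off from a computation already in the literature.

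Your proposal instead tries to show, by induction on $k$, that every second spread-syzygy over $[k]\times[2]$ is spread-decomposable. The inductive step---``the extra column introduced by $\stretchf$ does not create new homological complexity beyond degree $2$''---is left as an assertion; carrying it out rigorously would essentially reprove the content of \cref{theorem:stabilization} for $\cg=[2]$. The supporting heuristic, that $\gldim{\spread}{[2]}=0$ so crossing with $[k]$ should add at most two, does not generalize: $\gldim{\spread}{[m]}=0$ for every total order $[m]$, yet $\gldim{\spread}{[k]\times[m]}$ is unbounded in $m$ (this is exactly why \cref{conjecture:2} allows the stabilized value to depend on $m$). So the step cannot be as soft as you suggest, and even a successful induction would still need the base-case value---which is precisely the input \cite[Ex.~4.10]{AENY23} that the paper cites directly.
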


\begin{conjecture}[{\cite[Conj.~4.12]{AENY23}}]
    \label{conjecture:2}
    For every $m \geq 1 \in \mathbb{N}$,
    the function $g_m : \mathbb{N} \to \mathbb{N}$ given by $g_m(k) = \gldim{\spread}{ [k] \times [m] }$ is constant when $k \geq 2 + m$.
\end{conjecture}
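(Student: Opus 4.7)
The plan is to combine the main boundedness theorem announced in the abstract, applied to $\cp = [m]$, with a monotonicity argument. Boundedness yields a constant $C_m$ with $g_m(k) \leq C_m$ for all $k$, so it suffices to show that $g_m$ is non-decreasing from $k = 2+m$ onwards; a non-decreasing, bounded sequence of non-negative integers is eventually constant.

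To establish monotonicity I would build a stretching functor $\stretchf \colon \rep([k] \times [m]) \to \rep([k+1] \times [m])$ that duplicates one row of the first coordinate, together with a contracting companion $\contractf$ in the opposite direction which identifies the two duplicated rows and satisfies $\contractf \circ \stretchf \cong \id$. One verifies first that $\stretchf$ sends spread representations to spread representations, since duplicating a row preserves both poset-convexity and poset-connectedness of the support. The functor is exact and, being fully faithful with a retraction, sends spread-projective resolutions to spread-projective resolutions of equal length; symmetrically, $\contractf$ applied to a spread-resolution of $\stretchf F$ would produce a spread-resolution of $F$ of at most the same length, provided $\contractf$ is suitably compatible with the spread exact structure. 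Together these give $\pdim{\spread}{F} = \pdim{\spread}{\stretchf F}$ for all $F \in \rep([k] \times [m])$, and taking a supremum over $F$ yields $g_m(k) \leq g_m(k+1)$. Combined with boundedness, this gives the desired eventual constancy.

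The main obstacle is controlling how $\contractf$ acts on spreads of $[k+1] \times [m]$: contracting two identified rows can collapse a spread to a smaller convex connected set or, in borderline cases, merge pieces in unexpected ways, so verifying that $\contractf$ sends spread-decomposables to spread-decomposables and preserves exactness requires care. A clean way to handle this is to realize $\stretchf$ and $\contractf$ as Kan extensions along the evident order-preserving maps between $[k]$ and $[k+1]$, and to exploit adjointness to transfer $\ext$-computations in the spread exact structure between the two categories. Establishing the explicit threshold $k \geq 2+m$, rather than merely asymptotic stabilization, requires additional combinatorial work: one likely needs to exhibit an extremal indecomposable representation at $k = 2 + m$ whose spread-resolution length realizes the stable value, thereby ruling out any further growth beyond this point.
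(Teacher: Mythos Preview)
There is a basic point to clarify first: the paper does \emph{not} prove \cref{conjecture:2}. It establishes only the weaker \cref{corollary:conjecture-2}, in which the stabilization threshold is $k \geq 1 + 4m$ rather than $k \geq 2 + m$; the paper explicitly describes this as ``a slight weakening of \cref{conjecture:2}''. So there is no proof of the full conjecture to compare against, and the sharp threshold $2+m$ remains open after the paper.

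Your proposal does not prove \cref{conjecture:2} either, and you essentially concede this in your last paragraph. The argument ``bounded plus non-decreasing implies eventually constant'' gives no control over \emph{where} stabilization occurs; it does not even recover the paper's explicit threshold $1 + 4m$, let alone $2+m$. To obtain any explicit threshold one needs an inequality of the form $g_m(k) \leq g_m(k_0)$ valid for \emph{all} $k$, which together with monotonicity forces constancy from $k_0$ onward. This is exactly what the paper extracts (with $k_0 = 1 + 4m$) from \cref{theorem:stabilization}, whose proof proceeds via the radical-approximation machinery of \cref{section:bounding-relative-global-dimension,section:stabilization-dimension} and is genuinely quantitative, not a soft boundedness-plus-monotonicity argument. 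Your suggestion of ``exhibiting an extremal indecomposable at $k = 2+m$'' would not close the gap either: knowing that $g_m(2+m)$ is realized by some module does nothing to prevent $g_m(k)$ from exceeding that value for larger $k$.

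On the monotonicity step: the stretching and contracting functors you sketch are precisely the functors $\Lan_\iota$ and $\Lan_{\floor{-}_\iota}$ that the paper develops in \cref{section floor functor} (your worry about $\contractf$ on spreads is handled by \cref{prop: fls respect spread}). The resulting monotonicity $g_m(k) \leq g_m(k+1)$ is, however, already known and is simply cited in the paper as \cite[Thm.~1.2]{aoki-escolar-tada}; this part of your proposal re-derives a result the paper quotes.
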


\cref{conjecture:1} is relevant, for example, in the study of morphisms between representations of totally ordered sets~\cite{escolar-hiraoka,asashiba-escolar-hiraoka-takeuchi,jacquard-nanda-tillmann,urbancic-giansiracusa},
since the data given by two representations $M, N \in \rep([k])$ together with a morphism $M \to N$ is equivalent to the data given by a single representation of $\rep([k] \times [2])$, sometimes known as a persistence module over a commutative ladder.

\subsection{Contributions}
Recall that, for $\cp$ a poset, we let $\rep \cp$ denote the category of finitely presented representations of $\cp$,
and note that, if $\cp$ is finite, this is equivalently the category of finite-dimensional representations.

Our first main result gives positive answers to (Q1) and (Q2) for the spread-exact structure on the category of finitely presented representations of arbitrary grid posets, and in particular $\R^n$.
Recall that a \emph{grid poset} is any finite Cartesian product of total orders.

\begin{restatable}{theoremx}{finiteresolutionsinfinitecase}
    \label{theorem:answers-1-2-lattice}
    Let $\cp$ be a grid poset.
    The projective representations of the spread exact structure on $\rep \cp$ are the spread-decomposable representations.
    Every finitely presented representation of $\cp$ admits a finite spread-resolution.
\end{restatable}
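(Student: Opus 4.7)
The plan is to reduce both statements to the finite-poset case---where (Q2) is established in \cite{AENY23} and (Q1) follows from \cite[Thm.~1.15]{auslander-solberg}---by means of a Kan extension along the inclusion of a suitably chosen finite sub-upper-semilattice $L$ into $\cp$. Given a finitely presented representation $M \in \rep\cp$, I would first choose a finite presentation $F_1 \to F_0 \to M \to 0$ with $F_0$ and $F_1$ finite direct sums of representables, and take $L$ to be the finite sub-upper-semilattice of $\cp$ generated by the basepoints of these representables; the fact that $L$ is closed under the joins of $\cp$ will be essential below. One then identifies $M$ with the image under $\stretchf = \iota_!$ (the left Kan extension along $\iota : L \hookrightarrow \cp$) of a representation $N \in \rep L$.

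Next, I would verify three key properties of $\stretchf$: (a) it is exact on finitely presented representations; (b) it preserves finite presentability; and, most importantly, (c) it sends spread representations of $L$ to spread representations of $\cp$. Concretely, for a spread $S$ of $L$, the left Kan extension $\iota_! \mathbb{1}_S$ should be identified with the indicator representation of a suitable extension $\widetilde{S} \subseteq \cp$, and the join-closure of $L$ together with the upper-semilattice structure of $\cp$ is what guarantees that $\widetilde{S}$ remains convex and connected. Granting (a)--(c), the finite-poset case of (Q2) yields a finite spread-resolution of $N$ in $\rep L$, which stretches to a finite spread-resolution of $M$ in $\rep\cp$, establishing the second assertion of the theorem.

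For the first assertion, every spread representation is projective in the spread exact structure by the very definition of $\cf_\cx$, so spread-decomposable representations are projective. Conversely, any projective $P \in \rep\cp$ admits, by the second assertion just proved, a finite spread-resolution $\cdots \to Q_1 \to Q_0 \to P \to 0$; projectivity forces the surjection $Q_0 \to P$ to split, exhibiting $P$ as a direct summand of the spread-decomposable object $Q_0$. Since spread representations have local endomorphism rings (so that Krull--Schmidt applies), $P$ itself must decompose as a finite direct sum of spread representations.

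The main obstacle is point (c) above: identifying the left Kan extension of a spread representation of $L$ with a spread representation of $\cp$. This is precisely where the upper-semilattice hypothesis on $\cp$ (and the choice of $L$ as a sub-upper-semilattice) is used decisively, to ensure that the extended support remains both convex and connected. The remaining ingredients---the choice of $L$, the exactness and finite-presentability preservation of $\stretchf$, and the final Krull--Schmidt argument---should follow from reasonably standard considerations.
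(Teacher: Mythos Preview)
Your overall strategy matches the paper's: reduce to a finite sub-upper-semilattice $L \subseteq \cp$, use the finite-poset result of \cite{AENY23}, and push the resolution forward along $\Lan_\iota$. The argument for the first assertion (projectives are spread-decomposable) is also the same as the paper's.

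However, there is a genuine gap in your reduction of the second assertion. Properties (a)--(c) are \emph{not} sufficient to conclude that $\Lan_\iota$ sends a spread-resolution to a spread-resolution. A spread-resolution is not merely an exact sequence with spread-decomposable terms: by definition each map $C_i \to \ker f_{i-1}$ must be a right spread-\emph{approximation}, i.e.\ $\Hom_\cp(\K_S,-)$ must remain surjective for every spread $S$ of $\cp$. Exactness (a) and the fact that $\Lan_\iota$ sends spreads of $L$ to spreads of $\cp$ (c) give you surjectivity only for spreads of $\cp$ of the form $\Lan_\iota\K_T$ with $T$ a spread of $L$ (via full faithfulness of $\Lan_\iota$). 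But most spreads of $\cp$ are not of this form, so you have not shown that the pushed-forward sequence is spread-exact. Equivalently, an abelian-exact sequence with spread-decomposable terms need not be admissible in the spread exact structure, so you cannot conclude finiteness of the spread-projective dimension from it.

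The paper closes this gap by proving directly that $\Lan_\iota$ maps spread-approximations to spread-approximations (\cref{theorem:main-extension-theorem}). The mechanism is an additional adjunction: when $\us{\iota(L)} = \cp$, the functor $\Lan_\iota$ admits a further \emph{left} adjoint, the contraction functor $\Lan_{\floor{-}_\iota}$, and the key new lemma (\cref{prop: fls respect spread}) is that this contraction also sends spread-decomposables to spread-decomposables. Given that, for any spread $S \subseteq \cp$ one has $\Hom_\cp(\K_S, \Lan_\iota(-)) \cong \Hom_L(\Lan_{\floor{-}_\iota}\K_S, -)$, and the right-hand side is surjective on the original approximation because $\Lan_{\floor{-}_\iota}\K_S$ is spread-decomposable. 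The general case (where $\us{\iota(L)} \subsetneq \cp$) is then handled by padding. So the ``main obstacle'' is not your point (c), which is comparatively easy (\cref{lemma:stretching-of-spread-is-spread}), but rather the preservation of spread-approximations, which requires this dual statement about the contraction functor.
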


\cref{theorem:answers-1-2-lattice} is a consequence of the following result, which allows one to construct spread-resolutions of finitely presented representations over arbitrary grid posets by restricting them to a finite subposet, as is the case for the standard and for the rank exact structure.
In the result, a \emph{spread-approximation} of a representation $M$ is a morphism $C \to M$ with $C$ spread-decomposable, and such that every other morphism from a spread-decomposable representations into $M$ factors through it.
An \emph{aligned grid inclusion} is a poset morphism between grid posets that is the Cartesian product of inclusions of totally ordered sets; see \cref{section:aligned-grid-inclusion}.

\begin{restatable}{propositionx}{mainextensiontheorem}
    \label{theorem:main-extension-theorem}
    Let $\iota : \cq \to \cp$ be an aligned grid inclusion, with $\cq$ finite.
    The restriction functor $\res_\iota : \rep{\cp} \to \repq$ has a left adjoint $\Lan_\iota : \repq \to \repp$, which is exact and fully faithful, and which maps spread-approximations to spread-approximations.
\end{restatable}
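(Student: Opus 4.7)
The plan is to proceed in five stages: (i) construct $\Lan_\psi$ as the left Kan extension of a representation along $\psi$ and check that it lands in $\repp$; (ii) establish the adjunction with $\res_\psi$; (iii) prove exactness of $\Lan_\psi$; (iv) prove that $\Lan_\psi$ is fully faithful; and (v) prove that $\Lan_\psi$ preserves spread-approximations. For steps (i)--(iv), the key simplification is that the finiteness of $\cq$ combined with $\psi$ being an upper semilattice morphism implies that, for each $p \in \cp$, the set $T_p = \{q \in \cq : \psi(q) \leq p\}$ is a finite directed subset of $\cq$, hence admits a maximum $q_p$ whenever it is non-empty. The Kan extension formula therefore collapses to $\Lan_\psi M(p) = M(q_p)$ (and $0$ when $T_p = \emptyset$). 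Since $\Lan_\psi$ sends the representable functor on $q \in \cq$ to the representable functor on $\psi(q) \in \cp$ and is right exact as a left adjoint, applying it to a finite presentation of $M$ yields a finite presentation of $\Lan_\psi M$, so $\Lan_\psi$ lands in $\repp$ and the adjunction with $\res_\psi$ restricts. Exactness of $\Lan_\psi$ follows from the pointwise formula, since at each $p \in \cp$ the functor $\Lan_\psi$ is either evaluation $M \mapsto M(q_p)$ or the zero functor, both exact. Full faithfulness follows from the fullness of $\psi$, which forces $q_{\psi(q)} = q$, so that the unit $\eta_M : M \to \res_\psi \Lan_\psi M$ is the identity.

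Step (v) has two parts. First, $\Lan_\psi$ maps spread representations to spread representations: for a spread $S \subseteq \cq$, one computes $\Lan_\psi \K_S = \K_{\tilde S}$, where $\tilde S = \{p \in \cp : T_p \neq \emptyset \text{ and } q_p \in S\}$. The subset $\tilde S$ is convex because the map $p \mapsto q_p$ is monotone on its domain, and is connected because zigzags of $S$ in $\cq$ lift to zigzags in $\tilde S$ via $\psi$ upon observing that every interval $[\psi(q_p), p]$ is contained in $\tilde S$. The second part is the approximation property: given a spread-approximation $C \to M$ in $\repq$ and a morphism $f : \K_T \to \Lan_\psi M$ with $T \subseteq \cp$ a spread, one must construct a lift $\tilde g : \K_T \to \Lan_\psi C$ with $\Lan_\psi(C \to M) \circ \tilde g = f$. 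The natural first step is to consider $\bar f := \res_\psi f : \K_{\psi^{-1}(T)} \to M$ (using the unit isomorphism $\res_\psi \Lan_\psi M \cong M$); since $\psi^{-1}(T)$ is convex in $\cq$, the representation $\K_{\psi^{-1}(T)}$ is spread-decomposable, so $\bar f$ factors through $C \to M$.

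The hard part will be to upgrade this factorization of $\bar f$ to a natural transformation $\tilde g : \K_T \to \Lan_\psi C$ lifting $f$. The difficulty is that $\K_T$ need not lie in the essential image of $\Lan_\psi$, so the counit $\Lan_\psi \res_\psi \K_T \to \K_T$ need not be epi and the naive adjunction argument does not suffice. Overcoming this requires exploiting the upper semilattice structure of $\cq$: for each $p \in T$ with $T_p \neq \emptyset$, the value $\tilde g(p) \in C(q_p)$ must lift $f(p) \in M(q_p)$, and the compatibility among these lifts dictated by naturality of $\tilde g$ over $T$ can be achieved by iteratively applying the spread-approximation property of $C \to M$ to various spread representations of $\cq$---both to spreads arising from $\psi^{-1}(T)$ and to smaller spreads capturing the kernel of $C \to M$---and using the freedom in the resulting lifts (up to elements of $\ker(C \to M)$) to correct for discrepancies.
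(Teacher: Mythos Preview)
Your steps (i)--(iv) and the first part of (v) are correct and essentially match the paper's approach: the pointwise formula $\Lan_\psi M(p) = M(q_p)$ via the floor map $p \mapsto q_p$ is exactly what the paper establishes, and the argument that $\Lan_\psi$ preserves spread representations is the content of the paper's \cref{lemma:stretching-of-spread-is-spread}.

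The genuine gap is in the second part of step (v). You correctly diagnose the obstruction: applying $\res_\psi$ to the lifting problem and factoring $\bar f$ through $C \to M$ produces a map $\res_\psi \K_T \to C$, but since $\Lan_\psi \dashv \res_\psi$ is the wrong direction of adjunction, there is no formal way to turn this into a lift $\K_T \to \Lan_\psi C$; the counit $\Lan_\psi \res_\psi \K_T \to \K_T$ need not be epi. Your proposed remedy---iteratively correcting lifts using the kernel of $C \to M$ and auxiliary spread representations---is not a proof, and it is far from clear that such a procedure terminates or produces a globally natural transformation.

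The paper's resolution is to work on the \emph{other} side: after reducing to the case where $\cp$ is finite (via \cref{lemma:restriction-to-finite-lattice}) and $\cp = \us{\psi(\cq)}$ (handling the complementary convex piece with the padding functor, citing \cite{aoki-escolar-tada}), one observes that $\Lan_\psi = \res_{\floor{-}_\psi}$ itself admits a \emph{left} adjoint $\Lan_{\floor{-}_\psi}$, the contraction functor. The adjunction then gives
\[
\Hom_\cp(\K_T, \Lan_\psi M) \;\cong\; \Hom_\cq\bigl(\Lan_{\floor{-}_\psi}\K_T,\, M\bigr),
\]
and the key lemma (\cref{prop: fls respect spread}) is that $\Lan_{\floor{-}_\psi}$ sends spread-decomposable representations to spread-decomposable representations---explicitly, $\Lan_{\floor{-}_\psi}\K_{\us A \setminus \us B} \cong \K_{\us{\floor A}_\psi \setminus \us{\floor B}_\psi}$. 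Hence the adjoint map factors through the spread-approximation $C \to M$, and transporting back across the adjunction yields the lift. In short, the missing idea is that $\Lan_\psi$ sits in the middle of a triple of adjoints, and the spread-approximation property is proved using the further left adjoint, not $\res_\psi$.
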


%\begin{theoremx}
%    \label{theorem:finite-spread-resolutions}
    %For every finitely presented representation $M$ of $\cp$, there exists a finite, join-closed subset $\cp' \subseteq \cp$ and a representation $M'$ of $\cp'$ such that $M$ is the left Kan extension of $M'$ along the inclusion $\cp' \to \cp$.
    %A finitely presented representation of $\cp$ is fp.spread-projective if and only if it decomposes as a direct sum of fp.spread representations.
    %\luis{I think we need to say that an fp.spread-resolution can be obtained by extending a spread-resolution over a finite upper semilattice}
%\end{theoremx}

Our second main result concerns (Q3), also in the case of the spread exact structure, and is an extension of the assertion in \cref{conjecture:2} to arbitrary finite posets.

\begin{restatable}{theoremx}{stabilization}
    \label{theorem:stabilization}
    For every finite poset $\cq$, we have $n_\cq < \infty$, where
    \[
        n_\cq \; \coloneqq \;\sup_{
            \substack{\ct \text{ finite}\\\text{total order}}}\,
            \gldim{\spread}{\ct \times \cq}.
    \]
    If $\cg$ is a finite grid poset, then $n_\cg = \gldim{\spread}{\left[k\right] \times \cg}$, with $k \coloneqq 1 + 4\cdot |\cg|$.
    %If $\cq = [k_1] \times \cdots \times [k_n]$, then $n_\cq  = \gldim{\spread}{[k] \times \cq}$, for $k = 1 + 4 \cdot k_1 \cdots k_n$.
\end{restatable}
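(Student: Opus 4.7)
The plan is to combine \cref{theorem:main-extension-theorem} with a stabilization argument on the sequence $k \mapsto \gldim{\spread}{[k] \times \cq}$, where $[k]$ is the standard totally ordered set with $k$ elements.

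First, I would reduce the supremum over arbitrary total orders to a supremum over the $[k]$. Given a finitely presented $M \in \rep(\ct \times \cq)$, the set $J(M) \subseteq \ct$ of ``jump points'' (elements $t$ where the transition map $M|_{\{t\} \times \cq} \to M|_{\{t+1\} \times \cq}$ is not an isomorphism in $\rep\cq$) is finite, since $M$ is finitely presented and $\cq$ is finite. Setting $k_M = |J(M)| + 1$ and choosing any order-preserving injection $f : [k_M] \hookrightarrow \ct$ whose image contains $J(M)$, the induced map $\psi_f : [k_M] \times \cq \to \ct \times \cq$ is a full upper semilattice morphism (injections of total orders preserve joins), and the explicit formula for $\Lan_{\psi_f}$ along a full semilattice embedding shows $M \cong \Lan_{\psi_f}(N)$ for $N$ the restriction of $M$. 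By \cref{theorem:main-extension-theorem}, $\Lan_{\psi_f}$ is exact and sends spread-approximations to spread-approximations, hence sends spread-resolutions to spread-resolutions, giving $\pdim{\spread}{M} \leq \pdim{\spread}{N} \leq \gldim{\spread}{[k_M] \times \cq}$. Together with the tautological inequality $\gldim{\spread}{[k] \times \cq} \leq n_\cq$, this yields $n_\cq = \sup_k \gldim{\spread}{[k] \times \cq}$.

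Next, I would show that this sequence is bounded. Monotonicity in $k$ comes from the same Lan-extension argument applied to the full semilattice embedding $[k] \hookrightarrow [k+1]$. The main obstacle, and the crux of the proof, is boundedness: showing that once $k$ exceeds a threshold depending only on $|\cq|$, the spread-projective dimension of any $M \in \rep([k] \times \cq)$ is already realized on some $[k_\cq] \times \cq$ with $k_\cq$ fixed. My approach would be a contraction procedure: identify two consecutive layers of $M$ (viewed as a diagram of objects in $\rep \cq$) whose merging via a pushforward along a non-injective poset map $[k] \to [k-1]$ does not change the spread-projective dimension. A pigeonhole argument, exploiting that the number of combinatorial ``layer-transition types'' relevant to a spread-resolution is controlled by $|\cq|$, would guarantee such a pair exists whenever $k$ is sufficiently large.

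For the grid case, the sharper constant $k = 1 + 4|\cg|$ should arise from a more careful count: each element of $\cg$ contributes at most four possible ``corner/boundary behaviors'' of a spread subset of $[k] \times \cg$ in the $[k]$-direction, and an additional element accommodates an initial condition. Sharpness is verified by exhibiting a specific $M$ on $[1+4|\cg|] \times \cg$ attaining the maximal spread-projective dimension, and then checking via the contraction of Step~2 that no representation on any $[k] \times \cg$ with $k > 1+4|\cg|$ can exceed it. I expect the hardest step throughout to be the contraction, which must preserve the subtle homological data of the spread exact structure; the pigeonhole bound $1+4|\cg|$ should fall out of the grid-specific combinatorics of spreads.
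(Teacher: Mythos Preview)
Your reduction to the posets $[k]\times\cq$ and the monotonicity step are fine (and essentially appear in the paper via \cref{lemma:restriction-to-finite-lattice} and \cite[Thm.~1.2]{aoki-escolar-tada}). The gap is the ``contraction procedure'': you have not given any mechanism by which merging two consecutive $\ct$-layers preserves spread-projective dimension, nor any definition of ``layer-transition type'' that would make the pigeonhole argument go through. Spread-resolutions are not local in the layer direction---the spreads appearing in a resolution of $M$ can have supports that range across many layers simultaneously---so there is no obvious reason a layer-collapse $[\ell]\to[\ell-1]$ should respect them. Your derivation of the constant $1+4|\cg|$ (``four corner/boundary behaviors per element of $\cg$'') is likewise only a heuristic; nothing in the proposal pins down what those four behaviors are or why they suffice.

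The paper takes a completely different route. It does not contract $M$; instead it works entirely at the level of spread representations and their \emph{radical approximations}. The key technical input is \cref{thm: big small radapp}, which says that $\gldim{\add\cx}{\fmod\Lambda}=\gldim{\add\cy}{\fmod\Delta}$ whenever there is a family of exact, fully faithful functors $\phi:\fmod\Delta\to\fmod\Lambda$ sending $\acy$-approximations to $\acx$-approximations, and such that every $X\in\cx$ together with the domain of its minimal $\acx$-radical approximation lies in some $\add{\phi\cy}$. Taking $\Lambda=\K([\ell]\times\cg)$, $\Delta=\K([k]\times\cg)$, $\cx,\cy$ the spread representations, and $\Phi$ the set of $\Lan_\iota$ over all origin aligned grid inclusions $\iota:[k]\times\cg\hookrightarrow[\ell]\times\cg$, conditions (1)--(3) follow from \cref{theorem:main-extension-theorem} and \cref{lemma:stretching-of-spread-is-spread}. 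Condition (4) is the substantive step: using the explicit description of spread-irreducible morphisms from \cite{BBH2023} (see \cref{prop: spread radical approx domain}), the paper shows in \cref{prop rad approx hit by floor} that for any spread $S=\iprod{A}{B}\subseteq[\ell]\times\cg$, the minima and covers of $S$ \emph{and} of every summand of its radical approximation all lie in the subgrid $\pi_0\big(0\cup A\cup(A+e_0)\cup(B+e_0)\cup(B+2e_0)\big)\times\cg$. Since $A,B$ are antichains in $[\ell]\times\cg$, each has size at most $|\cg|$ (\cref{lemma: antichain bound}), so this subgrid has at most $1+4|\cg|$ columns; this is where the constant comes from, and why it is exactly $1+4|\cg|$ rather than an unspecified pigeonhole bound. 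The first statement (arbitrary finite $\cq$) is then deduced from the grid case by embedding $\cq$ fully into some finite grid $\cg$ and invoking monotonicity of spread-global dimension.
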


Using \cref{theorem:stabilization} we give a positive answer to \cref{conjecture:1}:

\begin{corollaryx}[\cref{conjecture:1}]
    \label{corollary:conjecture-1}
    If $k \geq 4$, then $\gldim{\spread}{ [k] \times [2] } = 2$.
\end{corollaryx}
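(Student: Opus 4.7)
The plan is to invoke \cref{theorem:stabilization} with $\cq = \cg = [2]$: since $[2]$ is itself a finite grid poset with $|\cg| = 2$, one obtains
\[
    n_{[2]} \;=\; \gldim{\spread}{[9] \times [2]},
\]
and hence $\gldim{\spread}{[k] \times [2]} \leq \gldim{\spread}{[9] \times [2]}$ for every finite total order $[k]$. This reduces the corollary to two finite verifications: an upper bound at $k = 9$ and a lower bound at some $k \geq 4$.

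For the upper bound I would verify $\gldim{\spread}{[9] \times [2]} \leq 2$ directly. Since $[9] \times [2]$ is a fixed poset of $18$ elements with only finitely many spreads, this is a finite computation, accessible via the algorithm for finite posets developed in [AENY23] (which produces spread-resolutions term by term). Combined with the inequality above, this gives $\gldim{\spread}{[k] \times [2]} \leq 2$ for every finite total order $[k]$.

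For the matching lower bound $\gldim{\spread}{[k] \times [2]} \geq 2$ at each $k \geq 4$, I would propagate from a base case at $k = 4$ via the extension proposition. The inclusion $[4] \into [k]$ as an initial segment is a full upper semilattice morphism, and hence so is the product inclusion $\psi \colon [4]\times[2] \into [k]\times[2]$. By \cref{theorem:main-extension-theorem}, $\Lan_\psi$ is exact and fully faithful; moreover, since it carries spread-approximations to spread-approximations, it takes spread-decomposable representations to spread-decomposable representations, and therefore preserves the length of minimal spread-resolutions. The equality $\gldim{\spread}{[4]\times[2]} = 2$ from [AENY23] then propagates to $\gldim{\spread}{[k]\times[2]} \geq 2$ for every $k \geq 4$, completing the proof.

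The main obstacle is the direct verification that $\gldim{\spread}{[9]\times[2]} \leq 2$. The reduction of the infinite family $\{[k]\times[2]\}_{k}$ to this single finite instance is entirely furnished by \cref{theorem:stabilization}; the residual computation, however, is not formally implied by any result in the excerpt and requires either a careful hand-computation or a small computer-algebra check over this fixed $18$-element poset.
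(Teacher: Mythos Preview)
Your proposal is correct and follows essentially the same route as the paper: invoke \cref{theorem:stabilization} with $\cg=[2]$ to reduce to $k\leq 9$, then appeal to the finite computation $g_2(4)=\cdots=g_2(9)=2$ from \cite[Ex.~4.10]{AENY23}. The only difference is cosmetic: for the lower bound the paper cites the monotonicity result \cite[Thm.~1.2]{aoki-escolar-tada} directly, whereas you re-derive the needed monotonicity from \cref{theorem:main-extension-theorem} (which is valid since the inclusion $[4]\times[2]\hookrightarrow[k]\times[2]$ is a full upper semilattice morphism, and \cref{lem: fully faithful preserve minimality} ensures $\Lan_\psi$ preserves minimality of resolutions).
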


\cref{theorem:stabilization} also implies a slight weakening of \cref{conjecture:2}, where the stabilization constant is still linear in $m$.

\begin{corollaryx}[cf.~\cref{conjecture:2}]
    \label{corollary:conjecture-2}
    For every $m \geq 1 \in \mathbb{N}$,
    the function $g_m : \mathbb{N} \to \mathbb{N}$ given by $g_m(k) = \gldim{\spread}{ [k] \times [m] }$ is constant when $k \geq 1 + 4m$.
\end{corollaryx}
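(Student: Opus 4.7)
The plan is to apply \cref{theorem:stabilization} with $\cq = [m]$ and then upgrade the resulting ``finite maximum'' statement to a stabilization statement by proving that $g_m$ is monotone non-decreasing in $k$. Since $[m]$ is a finite grid poset (a single finite total order) with $|[m]| = m$, \cref{theorem:stabilization} immediately yields $n_{[m]} = g_m(1+4m) < \infty$, and by the defining supremum we have $g_m(k) \leq n_{[m]}$ for every $k \geq 1$.

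The key remaining step is monotonicity: $g_m(k) \leq g_m(k')$ whenever $k \leq k'$. I would apply \cref{theorem:main-extension-theorem} to the inclusion $\psi : [k] \times [m] \hookrightarrow [k'] \times [m]$, $(i,j) \mapsto (i,j)$. This is order-preserving, full, and an upper-semilattice morphism (since joins in $[k] \times [m]$ are computed coordinatewise and agree with those in $[k'] \times [m]$), with finite source. Hence $\Lan_\psi : \rep([k] \times [m]) \to \rep([k'] \times [m])$ is exact, fully faithful, and left adjoint to $\res_\psi$. Moreover, $[k] \times [m]$ is a convex subposet of $[k'] \times [m]$, so for any spread $S \subseteq [k'] \times [m]$ the intersection $S \cap ([k] \times [m])$ is convex in $[k] \times [m]$ and decomposes into its finitely many poset-connected components; each component is a spread of $[k] \times [m]$, and the restricted representation splits as the corresponding direct sum of spread representations (elements in distinct components are incomparable, so the would-be connecting morphisms vanish). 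Thus $\res_\psi$ sends spread-decomposable representations to spread-decomposable representations. Now if $M \in \rep([k] \times [m])$ and $\Lan_\psi M$ admits a spread-resolution of length $d'$ in $\rep([k'] \times [m])$, applying the exact functor $\res_\psi$ produces an exact complex of spread-decomposables resolving $\res_\psi \Lan_\psi M \cong M$ (the unit of the adjunction is an isomorphism, as $\Lan_\psi$ is fully faithful). Hence $\gldim{\spread}{[k] \times [m]} \leq \gldim{\spread}{[k'] \times [m]}$.

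Putting the two steps together, for any $k \geq 1+4m$ one has $n_{[m]} = g_m(1+4m) \leq g_m(k) \leq n_{[m]}$, so $g_m(k) = n_{[m]}$, which is the claimed stabilization. I expect the monotonicity step to be the only non-trivial point, and within it the most delicate verification is that the restriction of a spread representation along the inclusion of a convex subposet really is spread-decomposable; this reduces to the elementary observation above that the intersection of a convex-connected subset with a convex subposet is a disjoint union of spreads whose indicator representations assemble into a direct sum. Everything else is bookkeeping with the hypotheses of \cref{theorem:stabilization} and \cref{theorem:main-extension-theorem}.
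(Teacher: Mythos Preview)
Your overall strategy matches the paper's: combine \cref{theorem:stabilization} (with $\cg = [m]$, so $k = 1+4m$) with monotonicity of the spread-global dimension in $k$. The paper simply cites \cite[Thm.~1.2]{aoki-escolar-tada} for monotonicity, whereas you choose to reprove it via the adjunction $\Lan_\psi \dashv \res_\psi$.

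Your reproof of monotonicity has a gap. By restricting a spread-resolution of $\Lan_\psi M$ along $\res_\psi$, you obtain an ordinary-exact complex of spread-decomposables ending in $M$, and then conclude that $\pdim{\spread}{M}$ is bounded by its length. But a spread-resolution in the sense of \cref{section:relative-homological-algebra} requires each map $C_i \to \ker f_{i-1}$ to be a spread-\emph{approximation}, not merely a surjection onto the kernel; an ordinary exact sequence with spread-decomposable terms need not be a spread-resolution, because the spread-exact structure is strictly smaller than the abelian one, and applying $H = \Hom(G,-)$ to such a sequence need not yield an exact sequence of $\Gamma$-modules. What you actually need is that $\res_\psi$ preserves spread-approximations. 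This follows from the ingredients you already assembled: for any spread $T \subseteq [k]\times[m]$, the adjunction gives $\Hom(\K_T, \res_\psi(-)) \cong \Hom(\Lan_\psi \K_T, -)$, and $\Lan_\psi \K_T$ is spread-decomposable by \cref{lemma:stretching-of-spread-is-spread}, so the lifting property transfers from $[k']\times[m]$ to $[k]\times[m]$. With this one extra line your restricted complex \emph{is} a spread-resolution and the argument closes; the delicate point is not the spread-decomposability of $\res_\psi \K_S$ (which you handled correctly) but this approximation-preservation step.
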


\cref{theorem:main-extension-theorem,theorem:stabilization} imply that certain infinite posets, such as $\R \times \cg$ with $\cg$ a finite grid poset, have finite spread-global dimension:

\begin{corollaryx}
    \label{corollary:R-times-lattice}
    If $\cg$ is a finite grid and $\ct$ is a total order, then
    $\gldim{\spread}{ \ct \times \cg } \leq n_\cg < \infty$.
\end{corollaryx}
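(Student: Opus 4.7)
The plan is to reduce the spread-global dimension of $\ct \times \cg$ to that of a finite subposet by transferring a spread-resolution along the left Kan extension from \cref{theorem:main-extension-theorem}. First I would observe that $\ct \times \cg$ is an upper semilattice: $\ct$ is totally ordered (hence a semilattice under $\vee = \max$), $\cg$ is a product of finite total orders (hence a finite lattice), and the product of upper semilattices is again one. So \cref{theorem:answers-1-2-lattice,theorem:main-extension-theorem} are applicable to $\ct \times \cg$.

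Next, fix a finitely presented $M \in \rep(\ct \times \cg)$ and choose a presentation
\[
    \bigoplus_j P_{q_j}\; \longrightarrow\; \bigoplus_i P_{p_i}\; \longrightarrow\; M \;\longrightarrow\; 0.
\]
Let $\ct' \subseteq \ct$ be the (finite) projection to $\ct$ of the finite set $\{p_i\}\cup\{q_j\}$, so $\ct'$ is a finite total order. The inclusion $\psi : \ct' \times \cg \into \ct \times \cg$ is a full upper semilattice morphism (joins in a product of totally ordered factors are coordinatewise maxima, which are preserved by the inclusion of a finite total order). Let $M' \in \rep(\ct' \times \cg)$ be defined by the same presentation inside $\rep(\ct' \times \cg)$. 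Because $\Lan_\psi$ is a left adjoint to $\res_\psi$, it sends representables to representables (via $\Lan_\psi P_q \cong P_{\psi(q)}$), and because it is exact by \cref{theorem:main-extension-theorem}, it carries the presentation of $M'$ onto that of $M$; thus $\Lan_\psi M' \cong M$.

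Now $\ct'\times \cg$ is itself a finite grid poset, so by \cref{theorem:stabilization} we have $\gldim{\spread}{\ct'\times \cg} \leq n_\cg$. Hence $M'$ admits a spread-resolution
\[
    0 \;\longrightarrow\; C_n \;\longrightarrow\; \cdots \;\longrightarrow\; C_1 \;\longrightarrow\; C_0 \;\longrightarrow\; M' \;\longrightarrow\; 0
\]
with $n \leq n_\cg$ and each $C_i$ spread-decomposable. I would then apply $\Lan_\psi$ term-by-term. Exactness of $\Lan_\psi$ preserves the usual exactness of this sequence; it remains to check that the resulting sequence is actually a spread-resolution of $M$, i.e.~that each $\Lan_\psi C_i$ is spread-decomposable and that each truncation is a spread-approximation of the next syzygy. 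This is precisely where \cref{theorem:main-extension-theorem} does the work: since the identity map of a spread-decomposable object is a spread-approximation, $\Lan_\psi$ sends spread-decomposables to spread-decomposables; and inductively, if $K_i = \ker(C_i \to C_{i-1})$, then $C_{i+1} \onto K_i$ being a spread-approximation is preserved by $\Lan_\psi$, with $\Lan_\psi K_i = \ker(\Lan_\psi C_i \to \Lan_\psi C_{i-1})$ by exactness. Assembling these facts yields a spread-resolution of $M$ in $\rep(\ct\times\cg)$ of length at most $n_\cg$, and since $M$ was arbitrary we conclude $\gldim{\spread}{\ct\times \cg}\leq n_\cg < \infty$.

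The main obstacle is the last step: verifying that \cref{theorem:main-extension-theorem} genuinely pushes a whole spread-resolution, not just a single spread-approximation, across $\Lan_\psi$. The induction is short but must cleanly combine three facts from the preceding results—exactness of $\Lan_\psi$, preservation of spread-decomposability, and preservation of spread-approximations—to produce the desired resolution length bound.
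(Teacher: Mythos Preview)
Your proposal is correct and follows essentially the same approach as the paper: restrict $M$ to a finite sub-grid $\ct'\times\cg$, take a spread-resolution there of length $\leq n_\cg$ using \cref{theorem:stabilization}, and push it forward along $\Lan_\psi$ using \cref{theorem:main-extension-theorem}. The only cosmetic differences are that the paper invokes \cref{lemma:restriction-to-finite-lattice} to produce $\ct'$ and $M'$ in one step (whereas you build them by hand from a presentation), and the paper cites \cref{theorem:main-extension-theorem} tersely for the transfer of the resolution, while you spell out the induction on syzygies; your derivation of ``$\Lan_\psi$ preserves spread-decomposables'' directly from the approximation-preservation clause is a nice shortcut that avoids separately invoking \cref{lemma:stretching-of-spread-is-spread}.
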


\subsection{Open questions}
We elaborate on two open questions regarding possible extensions of some of our results.
The first one concerns \cref{corollary:R-times-lattice}; our techniques are not enough to answer this question in the positive since they are restricted to grid posets.

\begin{question}
    Let $\cq$ be a finite poset, and let $\ct$ be an infinite total order.
    Does $\rep(\ct \times \cq)$ admit finite spread-resolutions?
    In that case, is $\gldim{\spread}{ \ct \times \cq }$ finite?
\end{question}

The second question concerns extending \cref{theorem:answers-1-2-lattice} to tame representations in the sense of Miller \cite{miller2}.
An \emph{m-tame}\footnote{We use m-tame rather than ``tame'' to avoid confusion with other notions of tameness.} representation of $\R^n$ is a (not necessarily finitely presented) representation $M : \R^n \to \kvec$ that admits a finite upset presentation (that is, such that $M \cong \coker(Q \to P)$, with $P$ and $Q$ finitely spread-decomposable, with all spreads being upsets); see \cite[Thm.~6.12]{miller2}.
The notion of an m-tame representation is strictly more general than that of a finitely presented representation; for example, the sublevel set persistent homology of two-dimensional Morse functions is typically not finitely presented, but it is m-tame \cite[Cor.~2.11]{budney-kaczynski}, and the same is true for subanalytically constructible sheaves \cite[Thm.~4.5']{miller}.

%\luis{tame representations need assumptions to form an Abelian category, cite Waas}

\begin{question}
    Does there exist a suitable category of m-tame representations of $\R^n$, strictly containing the finitely presented representations, with the property that every object admits a finite spread-resolution?
\end{question}

In the question, we ask for a suitable category of m-tame representations since it is known that the full category of m-tame representations has some pathologies, such as not being Abelian; see \cite[Sec.~4.5]{miller2} and \cite{waas} for the study of candidate subcategories.

%From \cref{theorem:A}, the monotonicity of the spread-global dimension, and results in \cite{BBH2023}, we deduce the following:

%\begin{corollaryx}
%    \label{corollary:finite-poset}
%    Let $\cp$ be a finite poset.
%    There exists a constant $n_\cp \in \mathbb{N}$ such that, for every finite linear order $\mathcal{L}$, we have $\gldim{\mathrm{spread}}{\rep(\mathcal{L} \times \cp )} \leq n_{\cp}$.
%\end{corollaryx}

%We also deduce the following:
%
%\begin{corollaryx}
%    \label{corollary:continuous-poset}
%    If $\ct_1, \dots, \ct_n$ are finite linear orders,
%    then the spread exact structure on $\rep(\mathbb{R} \times \ct_1 \times \cdots \times \ct_n)$ exists and has finite global dimension.
%\end{corollaryx}
%
%To the best of our knowledge, \cref{corollary:continuous-poset} is the first result establishing the finiteness of the spread-global dimension on an infinite, non-linear poset.
%In the result, the subscript $fp$ indicates that we are only considering finitely presented representations.
%%In the same way that \cref{conjecture:1} is relevant to the study of morphisms between representations of finite linear orders, \cref{corollary:spread-global-dimension-product-R} 
%We conjecture that \cref{corollary:R-times-lattice} can be extended as follows:

\subsection{Overview of the approach and structure of the paper}
\label{section:overview-of-approach}
In \cref{section:background} we provide background about
posets (\cref{section:posets}),
poset representations (\cref{section:poset-representations}),
spreads and spread representations (\cref{section:spreads}),
relative homological algebra (\cref{section:relative-homological-algebra}),
irreducible morphisms (\cref{section:irreducible-morphisms-radical}),
and projectivization (\cref{section:projectivization}).
%and
%radical approximations (\cref{section:radical-approximations}).

In \cref{section floor functor}, we prove \cref{theorem:answers-1-2-lattice}, which follows from \cref{theorem:main-extension-theorem}.
%For this, we study left Kan extensions along upper semilattice morphisms, and their action on spreads representations.
The results in this section can be understood as extensions of results that are known for particular choices of spreads (namely principal upsets in the case of the standard exact structure, and differences between principal upsets in the case of the rank exact structure).

In \cref{section:bounding-relative-global-dimension}, we prove \cref{thm: big small radapp}, a main technical contribution, which gives a general approach for bounding the $\add \cx$-global dimension of an algebra $\Lambda$ from above by the $\add \cy$-global dimension of another algebra $\Delta$, in the case where $\add\cx$ contains the image of $\cy$ under a well-behaved family of functors $\fmod \Delta \to \fmod \Lambda$.
In order to prove \cref{thm: big small radapp}, we use the notion of radical approximation, and in particular we rely on \cite{BBH2023} to give an explicit description of spread-radical approximations (\cref{prop: spread radical approx domain}).

In \cref{section:stabilization-dimension}, we prove \cref{theorem:stabilization} and its consequences.
The approach for proving \cref{theorem:stabilization} is to
apply \cref{thm: big small radapp} to the case where 
$\Lambda = \K\cp$ and $\Delta = \K\cq$ are poset algebras for $\cp$ and $\cq$ certain finite grids, and where $\cx$ and $\cy$ are the sets of spread representations of $\cp$ and $\cq$, respectively.
In order to apply \cref{thm: big small radapp}, we need a family of functors $\rep \cq \to \rep \cp$ satisfying two main properties: the functors must preserve spread-resolutions, and every spread-radical approximation over $\cp$ is the image of a spread-radical approximation over $\cq$ along one of these functors.
As family of functors we take all left Kan extensions along aligned grid inclusions $\cq \to \cp$, which satisfy the first property by \cref{theorem:main-extension-theorem}.
The rest of the results in the section concern the second property.

In \cref{section:technical-results-spreads}, we prove useful, elementary results about spreads.
%, which, to our knowledge, are not in the literature.

\subsection{Related work and other remarks}
We start by commenting on relations to \cite{BBH2023} and \cite{AT2025}.
The first-mentioned paper leverages the concept of aligned grid inclusion (\cref{section:aligned-grid-inclusion}) to study exact structures on categories of representations of infinite posets.
The second-mentioned paper (made public after the first version of this paper was made public) generalizes \cref{prop: fus spread exact} as \cite[Thm.~4.10]{AT2025}, among other things.

Note that, in the following remarks, we use notation from the body of this paper.

\begin{rem}
In \cite[Sec.~7.2]{BBH2023}, the contraction functor $\Lan_{\floor{-}_\psi}$ was computed in the case where $\psi$ is an aligned grid inclusion using the colimit
\[
        \left(\Lan_{\floor{-}_\psi} M\right)(q)
        \;=\;
        \colim_{\substack{
                p \in \cp \\
                {\floor p} = q}}\,
            M(p).
    \]
By the uniqueness of adjoints, it follows that this colimit coincides with the one used in the proof of \cref{lem:contraction is left adjoint}, in this case.
This can also be shown directly using \cite[Lem.~7.6]{BBH2023}; see \cite[Prop.~3.5]{AT2025} for a detailed proof.
\end{rem}

\begin{rem}
We briefly explain how our approach to \cref{theorem:answers-1-2-lattice} is related to the concepts developed in \cite[Sec.~7.3]{BBH2023} and \cite[Sec.~4]{AT2025}. Note in particular that,
although related, \cref{theorem:main-extension-theorem} is not a special case of \cite[Thm.~4.10]{AT2025}, since an aligned grid inclusion $\cq \rightarrow \cp$ is only an aligned interior system (in the sense of \cite{AT2025}) if $\cp = \us \cq$.

Let $\cp$ be a grid poset, and let $\mathfrak{H}$ denote the set of finite aligned subgrids of $\cp$. Let $\cx$ denote the set of spread representations of $\cp$, and for $\cq \in \mathfrak{H}$ let $\cx_\cq$ denote the set of spread representations of $\cq$.
In \cite[Def.~7.21]{BBH2023}, the notion of an $(\mathfrak{H},\cp)$-extended projective class was introduced. One can use \cite[Prop.~4.4]{AT2025} to conclude that
$(\cx,\{\cx_\cq\}_{\cq\in\mathfrak{H}})$
satisfies conditions (3) and (4) of this definition. Moreover, conditions (1), (2), and (6) of the definition all hold by standard arguments; see e.g., \cite[Thm~2.10~and~Lem.~7.13]{BBH2023}.
On the other hand,
$(\cx,\{\cx_\cq\}_{\cq\in\mathfrak{H}})$
does not satisfy condition (5) of \cite[Def.~7.21]{BBH2023}; see \cite[Ex.~7.26]{BBH2023}.
To get around this, we recall from (the dual of) \cite[Lem.~4.4]{AENY23} that any quotient of a spread representation is spread-decomposable (\cite{AENY23} shows this only for finite posets, but the result extends to infinite posets).
Thus
$(\cx,\{\cx_\cq\}_{\cq\in\mathfrak{H}})$
satisfies the following weakening of \cite[Def.~7.21(5)]{BBH2023}.

\begin{itemize}
 \item[(5')] Let $\iota:\cq \rightarrow \cp$ be an aligned grid inclusion. Let $N \in \rep\cq$, $X \in \cx$, and $f \in \Hom_\cp(X, \Lan_{\floor{-}_\iota}(N))$.  Then there exists $Z \in \add\cx$ with $\supp(Z) \subseteq \us{(\iota\cq)}$ and a morphism $g \in \Hom_\cp(Z, \Lan_{\floor{-}_\iota}(N))$ such that $f$ factors through $g$.
\end{itemize}
Indeed, one can take $Z$ to be the image of $f$ to satisfy the condition (5').
Using (5') in place of \cite[Def.~7.21(5)]{BBH2023}, one can then conclude \cref{theorem:answers-1-2-lattice} using an argument similar to the proof of \cite[Thm.~7.22]{BBH2023}.
\end{rem}

\begin{rem}
    \label{remark:mistake}
    The first public draft of this paper incorrectly stated \cref{theorem:answers-1-2-lattice} (and \cref{theorem:main-extension-theorem}) in the generality of upper semilattices (rather than grid posets).
    Toshitaka Aoki kindly pointed out the issue to us, and provided the following example, which exhibits a finitely presented representation of an upper semilattice that does not admit a finitely presented spread-approximation. This example shows that \cref{theorem:answers-1-2-lattice} does not generalize from grid posets to upper semilattices.
    The restriction to grid posets is mild, since the infinite posets of interest in persistence theory are typically $\R^n$ and $\Z^n$ \cite{botnan-lesnick}, both of which are grid posets.
    In the example, the bottom row represents a subposet isomorphic to $\{-\infty\} \cup (- \mathbb{N}) = \{-\infty < \cdots < -1 < 0 \}$, where the representation is constantly $\K$ with identity morphisms.
\[
\begin{tikzpicture}
    \matrix (m) [matrix of math nodes,row sep=1.5em,column sep=2em,minimum width=2em,nodes={text height=1.75ex,text depth=0.25ex}]
    {
        \K   &        &    &      \\
        \K^2 &        &    &  \K  \\
        \K   & \cdots & \K &  \K \\};
    \path[line width=0.75pt, ->]
    (m-1-1) edge [right] node {\scriptsize $[0,1]^T$} (m-2-1)
    (m-3-1) edge [right] node {\scriptsize $[1,0]^T$} (m-2-1)
    (m-3-1) edge [-,above] (m-3-2)
    (m-3-2) edge (m-3-3)
    (m-3-3) edge [above] node {\scriptsize $1$}(m-3-4)
    (m-3-4) edge [left] node {\scriptsize $1$} (m-2-4)
    (m-2-1) edge [above] node {\scriptsize $[1,1]$} (m-2-4)
    %(m-1-1) edge [bend left=36, above] node {$\;\;\;\;\Lan_{\floor{-}_\psi}$} (m-1-2)
    %(m-1-2) edge [bend left=-12, above] node {$\Lan_\psi$} (m-1-1)
    %(m-1-1) edge [bend left=-12, above] node {$\res_\psi$} (m-1-2)
    %(m-1-2) edge [bend left=36, above] node {$\Ran_\psi$} (m-1-1)
    ;
\end{tikzpicture}
\]
\end{rem}

%\luis{fix references once the sections are finished}
%In \cref{section floor functor}, 
%we study, for an upper semilattice morphism $\psi : \cq \hookrightarrow \cp$ with $\cq$ finite,
%the left Kan extension $\Lan_\psi : \rep \cq \to \rep \cp$.
%%Stretching functors are standard in persistence theory and more generally the representation theory of posets, and can be defined abstractly as a left Kan extension along a poset morphism (see \cref{section:general-setup-stretching}).
%Our main technical contribution here is \cref{prop: fus spread exact}, which implies that the $\Lan_\psi$ preserves spread-exact resolutions, addressing (1).

%In \cref{section:spread-radical-approximations}, we first
%prove \cref{prop: spread radical approx domain}, which builds on the description of spread-irreducible morphisms of \cite{BBH2023}, and gives an explicit description of spread-radical approximations.
%Then, combining this description with combinatorial arguments on posets, we prove \cref{prop rad approx hit by floor}, which gives sufficient conditions for a spread-radical approximation over $\cp$ to be in the image of $\Lan_\psi$ for some $\psi$, addressing (2).
%
%\smallskip
%
%\cref{theorem:stabilization} is then a consequence of \cref{thm: big small radapp,prop: fus spread exact,prop rad approx hit by floor}.
%\cref{theorem:stheorem:stabilizationcorollary:continuous-poset} are proven in \cref{section:hitting-radical-approximations}.

\subsection*{Acknowledgements}
We thank Thomas Br\"{u}stle and Steve Oudot for various inspiring conversations on these and related topics.
We thank Toshitaka Aoki for pointing out an issue in the first version of this paper (\cref{remark:mistake}).
BB thanks Emerson Escolar, Shunsuke Tada, and Toshitaka Aoki for conversations during a visit to Kobe University, supported in part by JSPS (International Fellowship for Research in Japan) and Mitacs Globalink.
JD was supported by the bourse d'excellence of the Institut des Sciences Mathématiques and by the Université de Sherbrooke under the supervision of Thomas Br\"{u}stle.
LS was supported by a Centre de Recherches Mathématiques-Institut des Sciences Mathématiques fellowship.
EJH was partially supported by Canada Research Chair CRC-2021-00120, NSERC Discovery Grants RGPIN-2022-03960 and RG-PIN/04465-2019, and an AMS-Simons travel grant. A portion of this work was completed while EJH was a postdoctoral fellow at the Université du Québec à Montréal and the Université de Sherbrooke.

%%%%%
\section{Background}
\label{section:background}

We assume familiarity with category theory \cite{maclane,riehl} and the fundamentals of representation theory of finite-dimensional algebras \cite{ARS,assem}.
% and quivers \cite{schiffler}.
In this section we recall the necessary definitions, results, and notations that we will need in the paper.

\subsection{Basic algebraic conventions}
Recall that we fix a field $\K$ throughout the paper.
We let $\kvec \subseteq \kVec$ denote the categories of finite-dimensional and all vector spaces, respectively.
All algebras are assumed to be unital and associative over $\K$.
If $\blam$ is a finite-dimensional algebra, we let $\fmod{\blam}$ denote the category of finitely generated (equivalently, finite-dimensional) right $\blam$-modules.

A \textit{linear Krull--Schmidt category} is an additive $\K$-category $\ca$ such that every object $A \in \ca$ admits a finite direct sum decomposition $A \cong \bigoplus_{i=1}^m A_i$, with $\End_\cc(A_i)$ a local $\K$-algebra for every $i$.
The objects with local endomorphism algebra are called \textit{indecomposable}.
For every finite-dimensional algebra $\Lambda$, the category $\fmod \Lambda$ is a linear Krull--Schmidt category.

Recall that a morphism $f : B \to C$ in a category is \emph{right minimal} if every morphism $g : B \to B$ such that $f \circ g = f$ is an isomorphism.

\subsection{Posets and spreads}
\label{section:posets}

For examples of the notions recalled here, see \cref{figure:cover-cocover-incomparable}.

\noindent Let $\cp$ be a poset.
We recall several definitions associated to a subset $S\subseteq \cp$, viewed as aposet with the same relation
:
\begin{itemize}

    \item Define $\us{S} \coloneqq \{ p\in \cp : \exists s\in S, s\leq p\}$ and
    $\ds{S} \coloneqq \{ p\in \cp : \exists s\in S, s\geq p\}$;

    \item The set $S$ is an \textit{upset} (resp.~\textit{downset}) if $S = \us{S}$ (resp.~$S = \ds{S}$),
    and a \emph{principal upset}
    (resp.~\textit{principal downset}) if $S = \us{\{p\}}$
    (resp.~$S = \ds{\{p\}}$), for some $p \in \cp$;

    \item The set $S$ is an \textit{antichain} if any two distinct elements $s \neq s'\in S$ are incomparable.

    \item The subsets $\minimal S \subseteq S$ and $\maximal S \subseteq S$ are the sets of \emph{minimal} and \emph{maximal} elements of $S$, respectively.

    \item
    The set of \emph{covers} of $S$
    is $\cover(S) \coloneqq \min(\us{S} \setminus S)$.

    \item
    The set of \emph{cocovers} of $S$
    is $\cocover(S) \coloneqq \max(\ds{S} \setminus S)$.

\end{itemize}
If $A,B \subseteq \cp$, define $\iprod{A}{B} \coloneqq (\us A) \cap (\ds B) =\{ p\in \cp : \exists a\in A, \exists b\in B, a\leq p\leq b\}$.
\begin{itemize}

    \item The set $S$ is \textit{convex} if $\iprod{\{s\}}{\{s'\}} \subseteq S$ for all $s,s' \in S$;
    
    \item The set $S$ is \textit{connected} if $S\neq \emptyset$ and for every $s,s' \in S$, there is a \textit{zigzag path} between from $s$ to $s'$ contained in $S$, that is, a sequence $s = s_0 \leq s_1 \geq s_2 \leq \cdots \geq s_k = s' \in S$;

    \item The set $S$ is a \textit{spread} if it is convex and connected;

    \item The set $S$ is a \textit{connected component} if it is connected and not properly contained in another connected subset.
\end{itemize}
For $p \in \cp$, we often denote the singleton set $\{p\}$ by $p$.

\begin{eg}
    In the poset of real numbers $\R$, a closed interval $[a,b] = \iprod{a}{b} \subseteq \R$ has no cocovers or covers, while a half-open interval $[a,b) = \us a \setminus \us b\subseteq \R$ has $b$ as its only cover.
\end{eg}

\begin{figure}
    \includegraphics[width=0.6\linewidth]{}
    \caption{Dots represent the finite grid poset $\cp = [31] \times [19]$.
        Note that in all figures in this paper, the poset order of a two-dimensional grid poset increases from left to right, and from bottom to top.
        The subsets $S_1,S_2 \subseteq \cp$ are spreads, and the union $S = S_1 \cup S_2$ is convex but not connected.
        Illustrated are also the minimal elements, maximal elements, covers, and cocovers of $S$, as well as $\us{S} \setminus S$ and $\ds{S} \setminus S$. \cref{cor: cover iff min upminus} describes the set $\incomp S$ of elements which are not comparable to any element of~$S$.
        }
    %\caption{ An example of \cref{cor: cover iff min upminus} for the convex set $S$, depicted in red. The complement of $S ~\sqcup \us{\cover{ S}} ~\sqcup \ds{\cocover S}$, depicted in white, are exactly the elements which are not comparable to any element of $S$.Note that the covers of $S$ are the minima of $\us S\setminus S$ as in \cref{lemma:description-finitely-presented-spreads}.}
\label{figure:cover-cocover-incomparable}
\end{figure}

%\luis{Here, I have commented out a lemma with several easy properties of the definitions of this subsection.
%If any of these properties are used, they should just be proved where they are used.
%The lemma is still commented, for reference.}\justin{I believe that all the uses of these lemmas have been accounted for (may 28)}
\begin{comment}
The following standard results are straightforward to verify.

\todo{cite lemmas where they are used. delete afterwards}
\begin{lem}
    Let $\cp$ be a poset.
    \begin{enumerate}
    \item The intersection of convex subsets of $\cp$ is convex.
    \item Each connected component of a convex subset of $\cp$ set is a spread.
    \item If $S \subseteq \cp$, the upset $\us S $ and downset $\ds S $ are convex. 
    \item If $S \subseteq \cp$, the sets $\minimal S$ and $\maximal S$ are antichains.
    \item If $S \subseteq \cp$ is finite, then $\us S = \us \minimal S$, $\ds S = \ds \maximal S$, and $(\us S) \cap (\ds S) = \iprod{\minimal S}{\maximal S}$.
    \item
    For any $A, B \subseteq \cp$, set $\usminus{A}{B}$ is convex.
    \item
    \todo{This statement seems wrong! Is it used somewhere?}
    If $A, B \subseteq \cp$ are such that $\cp$ contains a least upper bound for $A \cup B$, then $\usminus{A}{B}$ is a spread.
    \end{enumerate}
\end{lem}
\end{comment}

\subsection{Posets of interest}
\label{section:posets-of-interest}

An \emph{upper semilattice} is a poset $\cp$ where every finite, non-empty set of elements $S \subseteq \cp$ admits a join (i.e.~least upper bound), denoted $S$.
%If the empty set also admits a join (equivalently, $\cp$ has a least element), then we say that $\cp$ is a \emph{bounded upper semilattice}.

A \textit{grid poset} is a poset of the form $\cg = \ct_1 \times \cdots\times \ct_n$, where each $\ct_i$ is a totally ordered set, and the order on $\cg$ is the product order, that is
\[
    (x_1,\dots , x_n) \leq (y_1,\dots, y_n)
    \text{ if and only if }
    x_i \leq y_i \in \ct_i \text{ for all } 1\leq i \leq n. 
\]
We denote by $\pi_i : \cg \to \ct_i$ the projection map $(x_1,\dots , x_n) \mapsto x_i$.
Note that every grid poset is an upper semilattice.

If $\cg$ is a grid poset and $\ct$ is a totally ordered set, then $\ct \times \cg$ is again a grid poset.
In this case, the projection denoted by $\pi_0$ represents the projection onto $\ct$.

\subsection{Poset representations}
\label{section:poset-representations}
Let $\cp$ be a poset.
The category associated to $\cp$, also denoted by $\cp$, has as objects the elements of $\cp$, precisely one morphism $x \to y$ when $x \leq y \in \cp$, and no morphisms $x \to y$ otherwise.
A \emph{representation}
    %\luis{Also note that in older references such as \cite{nazarova-roiter} poset representations mean a different thing?}}
of $\cp$ is a functor $\cp \to \kVec$.
Note that, in the persistence literature, representations of posets are also known as persistence modules.

Poset representations of a finite poset $\cp$ can be seen as modules
over the incidence algebra of $\cp$.
Recall that the \emph{incidence algebra} of $\cp$, denoted $\K\cp$, has as underlying vector space the free vector space generated by pairs $[i,j]$ with $i \leq j \in \cp$, and multiplication given by linearly extending the rule $[i,j][k,l] = [i,l]$ if $j = k$, and $[i,j][k,l] = 0$ otherwise. 

\medskip

\noindent
\begin{center}
    \fbox{%
        \begin{minipage}{0.98\linewidth}
            \begin{rem}
                \label{remark:poset-representations-as-modules}
                When $\cp$ is a finite poset, the algebra~$\K\cp$ is unital and finite dimensional, and there is an equivalence of categories $\rep \cp \simeq \fmod{\K\cp}$ (see, e.g., \cite[Lem.~2.1]{botnan-oppermann-oudot-scoccola}).
                By a standard abuse of notation, we identify these two categories.
            \end{rem}
        \end{minipage}}
\end{center}

\begin{rem}
    \label{remark:poset-duality}
    The \emph{opposite} of a poset $\cp$, denoted $\cp^\op$, has the same underlying set as $\cp$ and the order relation $p \leq p' \in \cp^\op$ if and only if $p' \leq p \in \cp$.
    When $\cp$ is viewed as a category, the opposite $\cp^\op$ is usual opposite category \cite[Ch.~1,~Def.~2.1]{riehl}.
    %We can therefore consider the notion of duality in a poset. 
    Any statement that applies to all posets thus has a dual statement, obtained by replacing all posets by their duals, and the original statement is true if and only if its dual is. 

    In particular, a subset $S\subseteq \cp$ of a poset is convex (resp.~connected, an antichain) if and only if the subset $S^\op \subseteq \cp^\op$ is convex (resp.~connected, an antichain) in the opposite poset.
    Similarly, we have $\max S = \min(S^\op)$, $\cover S = \cocover(S^\op)$, and $\us{S} = \ds{(S^\op)}$.
    %These results can be observed by looking at \cref{figure:cover-cocover-incomparable} upside-down.
% It is easy to see that maxima are dual to minima, covers are dual to cocovers, and upsets are dual to downsets. 
%Furthermore, a subset $S\subseteq \cp$ is convex (\emph{resp.} connected) if and only
\end{rem}

\subsection{Spread representations}
\label{section:spreads}

If $S \subseteq \cp$ is convex, define the \textit{indicator representation} $\K_S \in  \rep\cp$ by
\begin{align*}
    \K_S(p) = \begin{cases}
    \K & p\in S \\ 0 & \text{otherwise}
\end{cases} 
\hspace{.5cm} \text{and} \hspace{.5cm} \K_S(p\leq p') = \begin{cases}
    \id_\K & p,p' \in S \\ 0 & \text{otherwise}
\end{cases} 
\end{align*}
for all $p,p' \in \cp$.
If $S$ is a spread, the representation $\K_S$ is called a \emph{spread representation}.

%By a standard abuse of language, we say that a representation of $\cp$ is a \emph{spread representation} if it is isomorphic to $\K_S$ for some spread $S \subseteq \cp$.
Note that all spread representations are indecomposable, since their endomorphism algebra is $\K$.

The following result is classical; see, e.g., \cite[Lem.~2.4]{botnan-oppermann-oudot-scoccola} for a proof.

\begin{lem}
    \label{lemma:proj-is-spread}
    Let $\cp$ be a finite poset.
    A representation $M :\cp \to \kvec$ is projective if and only if it is a finite direct sum of spread representations of the form $\K_{\us{p}}$ for $p \in \cp$.
    Dually, a representation is injective if and only if it is a finite direct sum of spread representations of the form $\K_{\ds{p}}$ for $p \in \cp$.
    Thus, over a finite poset, the additive closure of the spread representations contains all projective and injective representations.
    \qed
\end{lem}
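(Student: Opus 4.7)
The plan is to work under the equivalence $\rep \cp \simeq \fmod{\K \cp}$ from \cref{remark:poset-representations-as-modules} and to combine the Yoneda lemma with Krull--Schmidt. First, I would observe that $\K_{\us p}$ is the $\K$-linearization of the representable functor $\cp(p,-) : \cp \to \mathrm{Set}$. Indeed, $\cp(p,q)$ is a singleton when $p \leq q$ and empty otherwise, and the transition maps are identities wherever composable; this matches the definition of $\K_{\us p}$ exactly. By (the linear version of) the Yoneda lemma, there is a natural isomorphism $\Hom_{\rep \cp}(\K_{\us p}, M) \cong M(p)$ for every $M \in \rep \cp$. Since evaluation at $p$ is exact, $\K_{\us p}$ is projective, and since its endomorphism algebra is one-dimensional, it is indecomposable.

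Next, I would identify $\bigoplus_{p \in \cp} \K_{\us p}$ with $\K\cp$ as a right module over itself. Using the orthogonal primitive idempotents $e_p = [p,p]$ one has $\K\cp = \bigoplus_{p \in \cp} e_p \K\cp$; the basis element $[p,q]$ of $e_p \K\cp$ lives in degree $q$ precisely when $p \leq q$, and right multiplication by $[q,q']$ realises the transition maps, so $e_p \K\cp \cong \K_{\us p}$ as representations. Hence $\K\cp$, as a regular right module, is a direct sum of the $\K_{\us p}$'s. Any finitely generated projective $M$ is a direct summand of some $(\K\cp)^n$, so the linear Krull--Schmidt property of $\fmod{\K\cp}$ forces $M$ to be a direct sum of copies of the $\K_{\us p}$. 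Conversely, finite direct sums of projectives are projective.

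For the injective statement I would invoke the standard $\K$-linear duality $D = \Hom_\K(-,\K) : (\rep \cp)^\op \to \rep \cp^\op$, which is an equivalence (since everything is finite-dimensional) and which exchanges projectives with injectives. Under this duality, $D(\K_{\us p})$ viewed as a representation of $\cp^\op$ is again an indicator of an upset, and pulling back via the identification in \cref{remark:poset-duality} this becomes $\K_{\ds p}$ in $\rep \cp$. Applying the projective half of the theorem to $\cp^\op$ therefore gives the injective half for $\cp$. There is no real obstacle here; the only care needed is to match conventions between the functorial description of spread representations and the module-theoretic description via $e_p \K\cp$, which is a routine but unavoidable bookkeeping step.
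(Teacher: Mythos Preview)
Your argument is correct and complete. Note, however, that the paper does not actually prove this lemma: it states the result as classical, cites \cite[Lem.~2.4]{botnan-oppermann-oudot-scoccola} for a proof, and ends with a \qed. So there is nothing to compare against; your Yoneda-plus-Krull--Schmidt-plus-duality argument is exactly the standard justification one would find in the cited reference or reconstruct oneself.
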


\subsection{Finitely presented representations}

A representation $M$ of $\cp$ is \emph{finitely presented} if it is the cokernel of a morphism $C_1 \to C_0$, where $C_0$ and $C_1$ are finite direct sums of representations of the form $\K_{\us{p}}$, for $p \in \cp$ (i.e. representable functors).
The category of finitely presented representations of $\cp$ is denoted by $\rep \cp$.

A convex subset $S\subseteq \cp$ is \emph{finitely presented} if $\K_\cp$ is finitely presented.
Thus, if $\cp$ is finite, every convex set is finitely presented.

The following are important properties of finitely presented convex sets.
Proofs are in 
\cref{section:technical-results-spreads}.

\begin{lem}
    \label{lemma:description-finitely-presented-spreads}
    Let $\cp$ be a poset.
    If $S \subseteq \cp$ is a finitely presented convex set, then
    $\min(S)$ and $\cover(S)$ are finite and $S = \us{\min(S)} \setminus \us{\cover(S)}$.
\end{lem}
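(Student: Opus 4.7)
The plan is to derive finiteness of $\min(S)$ and $\cover(S)$ from a finite presentation of $\K_S$, and then deduce the identity $S = \us{\min(S)} \setminus \us{\cover(S)}$ from convexity of $S$.

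First I would fix a presentation $C_1 \to C_0 \to \K_S \to 0$ with $C_0 = \bigoplus_{p \in A} \K_{\us p}$ for some finite $A$. A nonzero morphism $\K_{\us p} \to \K_S$ forces $p \in S$, so after discarding zero summands I may assume $A \subseteq S$. Surjectivity of $C_0 \to \K_S$ then becomes the statement that every $s \in S$ lies above some element of $A$, whence $\us A = \us S$; a routine descent using finiteness of $A$ further shows $\min(S) = \min(A)$, so $\min(S)$ is finite.

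To control $\cover(S)$, I would replace $C_0$ by the finitely generated projective $C' = \bigoplus_{m \in \min(S)} \K_{\us m}$ equipped with its natural surjection $\tau : C' \to \K_S$. Schanuel's lemma applied to the two presentations of $\K_S$ gives that $\ker \tau$ is finitely generated. The key observation is that $\us S \setminus S$ is convex: indeed, if $x \le z \le y$ with $x, y \in \us S \setminus S$ and $z \in S$, picking $s \in S$ with $s \le x$ yields $s \le x \le z$, forcing $x \in S$ by convexity of $S$, a contradiction. This makes $\K_{\us S \setminus S}$ a well-defined representation fitting into a short exact sequence $0 \to \K_{\us S \setminus S} \to \K_{\us S} \to \K_S \to 0$, and factoring $\tau$ through $\K_{\us S} \to \K_S$ produces a surjection $\ker \tau \onto \K_{\us S \setminus S}$. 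Thus $\K_{\us S \setminus S}$ is finitely generated, which by the standard characterization of finite generation for an indicator representation yields a finite $F \subseteq \us S \setminus S$ with $\us S \setminus S \subseteq \us F$; a descent argument then gives $\cover(S) = \min(\us S \setminus S) \subseteq F$ finite, and that every element of $\us S \setminus S$ lies above some cover.

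For the identity $S = \us{\min(S)} \setminus \us{\cover(S)}$, the inclusion $S \cap \us{\cover(S)} = \emptyset$ follows because any $c \in \cover(S)$ with $c \le s \in S$ would satisfy $s' \le c \le s$ for some $s' \in S$, forcing $c \in S$ by convexity of $S$; conversely, any $x \in \us{\min(S)} \setminus S$ lies in $\us S \setminus S$ and hence above some cover. The main obstacle I expect is the middle step, namely producing the surjection $\ker \tau \onto \K_{\us S \setminus S}$; this rests on convexity of $\us S \setminus S$, without which $\K_{\us S \setminus S}$ would not even be a well-defined representation. The remaining steps are essentially bookkeeping.
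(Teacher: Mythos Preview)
Your proof is correct and takes a somewhat different route from the paper's. The paper (in the appendix) works with a \emph{minimal} projective presentation of $\K_S$ and reads off directly that the support of the degree-zero generators is a finite generating set for $\us S$, while the support of the degree-one generators is a finite generating set for $\us S \setminus S$; it then applies the elementary fact that a finitely generated upset $U$ has finite $\min U$ and equals $\us{\min U}$ to each of $\us S$ and $\us S \setminus S$. You instead start from an arbitrary finite presentation, handle $\min S$ essentially as the paper does, and then invoke Schanuel's lemma together with the factorization $C' \twoheadrightarrow \K_{\us S} \twoheadrightarrow \K_S$ to show that $\K_{\us S \setminus S}$ is finitely generated. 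The payoff of your route is that it sidesteps the existence of minimal presentations, which for an arbitrary infinite poset $\cp$ is not entirely free; the paper's route is shorter once that is granted.

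One small point of labeling: for the short exact sequence $0 \to \K_{\us S \setminus S} \to \K_{\us S} \to \K_S \to 0$ to exist you need $\us S \setminus S$ to be an \emph{upset} of $\cp$, not merely convex (convexity makes $\K_{\us S \setminus S}$ a well-defined representation but does not by itself produce the monomorphism into $\K_{\us S}$). Fortunately your argument already proves the upset property, since it only uses $x \le z$ and never $z \le y$; so the content is fine and only the word ``convex'' should be upgraded.
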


\begin{lem}\label{cor: cover iff min upminus}
    If $S$ is a finitely presented convex subset of a poset $\cp$,
    then $\us S = S \sqcup \us{\cover S}$, $\ds S = S \sqcup \ds{\cocover S}$, and
    $\cp = S \sqcup (\us{\cover{ S}}) \sqcup (\ds{\cocover S}) \sqcup (\incomp S)$,
    where $\incomp S = \cp \setminus(\ds S \cup \us S ) $ consists of all elements that are not comparable to any element of $S$.
\end{lem}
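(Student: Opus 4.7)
The strategy is to feed \cref{lemma:description-finitely-presented-spreads} through the convexity of $S$. That lemma provides finite $\min S$ and $\cover S$ together with the explicit description $S = \us{\min S} \setminus \us{\cover S}$; I will use this to derive the upward identity and the four-part partition of $\cp$, and the symmetric downward description $S = \ds{\max S} \setminus \ds{\cocover S}$ (obtained dually) to derive the second identity.

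For $\us S = S \sqcup \us{\cover S}$, I would first show $\us S = \us{\min S}$. The inclusion $\us{\min S} \subseteq \us S$ is immediate from $\min S \subseteq S$; conversely, every $s \in S \subseteq \us{\min S}$ by the explicit description, so $\us S \subseteq \us{\us{\min S}} = \us{\min S}$. Next, each $c \in \cover S$ lies in $\us S \setminus S \subseteq \us{\min S}$, whence $\us{\cover S} \subseteq \us{\min S}$. Rewriting $S = \us{\min S} \setminus \us{\cover S}$ as a partition then gives $\us{\min S} = S \sqcup \us{\cover S}$, which is the desired identity. The identity $\ds S = S \sqcup \ds{\cocover S}$ follows by the dual argument.

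For the four-part partition of $\cp$: by definition $\cp = (\us S \cup \ds S) \sqcup \incomp S$, so it suffices to decompose $\us S \cup \ds S$. Convexity of $S$ yields $\us S \cap \ds S = S$, since any $p$ with $s \leq p \leq s'$ for $s,s' \in S$ lies in $\iprod{s}{s'} \subseteq S$. Combining this with the previous two decompositions gives
\[
\us S \cup \ds S \;=\; S \sqcup (\us S \setminus S) \sqcup (\ds S \setminus S) \;=\; S \sqcup \us{\cover S} \sqcup \ds{\cocover S},
\]
provided the latter two pieces are disjoint. If $p \in \us{\cover S} \cap \ds{\cocover S}$, then $c \leq p \leq c'$ for some $c \in \cover S$ and $c' \in \cocover S$, and so there exist $s,s' \in S$ with $s \leq c \leq c' \leq s'$; convexity of $S$ then forces the entire chain, and in particular $c$, to lie in $S$, contradicting $c \in \cover S \subseteq \cp \setminus S$.

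The delicate point I foresee is the downward direction: \cref{lemma:description-finitely-presented-spreads} is stated only in the upward sense, so completing the proof of $\ds S = S \sqcup \ds{\cocover S}$ requires the downward counterpart, i.e.\ that $\max S$ and $\cocover S$ are finite and $S = \ds{\max S} \setminus \ds{\cocover S}$. This must be supplied either by a symmetric reading of the finite-presentation hypothesis or by an ambient property of $\cp$. Modulo this input, everything else is a direct consequence of the explicit description of $S$ and its convexity, with no intricate case analysis.
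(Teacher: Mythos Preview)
Your proposal is correct and follows essentially the same route as the paper. The paper's proof observes convexity gives $S = \us S \cap \ds S$, writes the set-theoretic decomposition $\cp = S \sqcup (\us S \setminus S) \sqcup (\ds S \setminus S) \sqcup \incomp S$, and then identifies $\us S \setminus S = \us{\cover S}$ (and dually) via \cref{lemma:finitely-generated-upset,lemma:finitely-presented-implies-finitely-generated-upsets}; you instead route through \cref{lemma:description-finitely-presented-spreads}, which is itself proved from those same two lemmas, so the logical content is identical. Your flagged concern about the downward direction---that finite presentation of $\K_S$ does not obviously dualize---is legitimate, and the paper's proof handles it with the same one word ``Dually'', so you are not missing anything the paper supplies.
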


%\begin{lem}
%    \label{lemma:restriction-has-adjoints}
%    Let $f : \cq \to \cp$ be a poset map such that \luis{I still need to see what is a good set of assumptions}.
%    The restriction functor $f^* : \rep \cp \to \rep \cq$ given by precomposition with $f$ admits both a left and a right adjoint.
%\end{lem}
%\begin{proof}
%    \todo{}
%\end{proof}

\subsection{Poset morphisms}
\label{section:poset-morphisms}
A \emph{poset morphism} is a function $\psi : \cq \to \cp$ between posets such that $q \leq q' \in \cq$ implies $\psi(q) \leq \psi(q')$.
Such poset morphism is \emph{full} if $\psi(q) \leq \psi(q') \in \cp$ implies $q \leq q' \in \cq$ for every $q,q'\in\cq$.
Note that a full poset morphism is in particular injective.
An \emph{upper semilattice morphism} is a morphism between upper semilattices that preserves all finite, non-empty joins.

In the next subsection we introduce a particular kind of upper semilattice morphism that is central to this paper.

\subsection{Aligned grid inclusions}
\label{section:aligned-grid-inclusion}

Let $\cq_j$ and $\cp_j$ be totally ordered sets for $1 \leq j \leq n$, and let $\cq = \cq_1 \times \cdots \times \cq_n$ and $\cp = \cp_1 \times \cdots \times \cp_n$ be the corresponding grid posets.
An \emph{aligned grid inclusion} from $\cq$ to $\cp$ is a poset morphism $\iota : \cq \to \cp$
that is equal to a product $\iota_1 \times \cdots \times \iota_n$ of injective poset morphisms $\iota_j : \cq_j \hookrightarrow \cp_j$.

The following is straightforward to check.

\begin{lem}
    \label{lemma:aligned-grid-inclusion-semilattice-morphism}
    Every aligned grid inclusion is a full upper semilattice morphism.
    \qed
\end{lem}

\begin{eg}
    An aligned grid inclusion into a product poset $\{0 < \cdots < a\} \times \{0 < \cdots < b\}$ can be obtained by deleting some columns, as in \cref{figure:example-aligned-grid-inclusion-and-floor}.
\end{eg}

If $\cp$ and $\cq$ have a minimum element, an \emph{origin aligned grid inclusion} from $\cq$ to $\cp$ is an aligned grid inclusion that maps the minimum of $\cq$ to the minimum of $\cp$.

\subsection{Homological algebra relative to a subcategory}
\label{section:relative-homological-algebra}
%\begin{defn}\label{def: homological}
Let $\cc \subseteq \ca$ be a full subcategory, with $\ca$ a linear Krull--Schmidt category, and let $M \in \ca$.
\begin{itemize}
    \item A (minimal) \textit{$\cc$-approximation} is a (right minimal) morphism $f:C \to M$ such that $C\in \cc$ and such that $\Hom_\ca(Z, f) : \Hom_\ca(Z,C) \to \Hom_\ca(Z,M)$ is surjective for all $Z\in\cc$.
    
    \item The sequence $C_\bullet$ in the top row of the following diagram is 
    a \textit{(minimal) $\cc$-resolution} of $M$ if each $g_i: C_i \to \ker f_{i-1}$ is a (right minimal) $\cc$-approximation:
    \begin{equation*}
        % https://tikzcd.yichuanshen.de/#N4Igdg9gJgpgziAXAbVABwnAlgFyxMJZABgBpiBdUkANwEMAbAVxiRAB12BjKCHBAL6l0mXPkIoAjOSq1GLNgE0A+gCYQQkdjwEiqmdXrNWiECskbhIDNvFEAzAbnGly4pa1jdKACxOjCqYAsh7WojoSyNKSsgEmHOwA1jAATgAEAGZqoTZekfoxhvLxnMnpWRaaYbbeyI6FzoEJZZluOeF2vqQNcWylqa3AALSSAu01kQCs-sVs7gKyMFAA5vBEoBkpEAC2SGQgOBBI0o3xWfYg1Ax0AEYwDAAKHd4gDDAZOKGbO3vUh0jTU5sZbZK63e5PCZsN4fL5bXaIE7-RD6IGmLLqMF3R7PCSvd6fKrfBFIo6IABsRRcphBmNe4JxUNMMMJVmJSFRyMcaJAFUu9OxkLy0IJcJ+KL+ZIA7FSmiCLFiIbiRbCifCkNzkX4eVl3IrGcLmaK1eLNWSAByy+IgvUCpVM-GqtnqxCA5EnXqIMBMBgMfVCiIqz5-OhYBhsAAWEAgiTFCMpBzJqM93t9-uVRthIbDkejsZNCJliY1VqQqb9doNgczwYOofDpijMbjSEtxcQ2pTPor10FGcdtZw9dzzYLSG1yIAnKX0cphqMNBQBEA
        \begin{tikzcd}
        \cdots \arrow[r, "f_3"] \arrow[rd, "g_2"] & C_2 \arrow[r, "f_2"] \arrow[rd, "g_2"] & C_1 \arrow[r, "f_1"] \arrow[rd, "g_1"] & C_0 \arrow[r, "f_0"] \arrow[rd, "g_0"] & M \arrow[r, "f_{-1}"]       & 0 \\
                                                  & \ker f_2 \arrow[u, hook]               & \ker f_1 \arrow[u, hook]               & \ker f_0 \arrow[u, hook]               & \ker f_{-1} \arrow[u, hook] &  
        \end{tikzcd}
        \label{eq:def resolution}
    \end{equation*}
    In that case, the subsequence $C_1 \xrightarrow{f_0} C_1 \xrightarrow{f_0} M$ is a \textit{(minimal) $\cc$-presentation} of $M$.

    \item The \textit{length} of a $\cc$-resolution $C_\bullet \to M$ is $\min (\{i\in \N : C_{i+1}=0 \} \cup \{\infty\}  )$.

    \item If $M$ admits a $\cc$-resolution, the \textit{$\cc$-dimension} of $M$, denoted $\pdim{\cc}{M}$, is the smallest length of a $\cc$-resolution of~$M$.

    \item If every object of $\ca$ admits a $\cc$-resolution, the \textit{$\cc$-global dimension} of $\ca$ is $\gldim{\cc}{\ca} \coloneqq \sup \{ \pdim{\cc}{N} : N \in \ca\}$.
\end{itemize}
%\end{defn}

Note that a $\cc$-approximation is also known as a right $\cc$-approximation or a $\cc$-precover.

%Although we will not use this, we note that, in the above setting, the following class of morphisms forms an exact structure on $\ca$ in the sense of \cite[Prop. 1.7]{Draxler_exact_cats}:
%\luis{I think the following is not right, the sequences should be those orthogonal to the objects in $\cx$}
%\[
%    \left\{\, \ker \into C \xrightarrow{f} M : f \text{ is a right $\cc$-approximation}\,\right\}.
%\]
%\justin{since this is false, we should just cut it. we don't need it}
\begin{eg}
    Let $\bgam$ be a $\K$-algebra and consider the subcategory $\proj \bgam \subseteq \fmod \bgam$ of projective modules.
    Then, the notions of $\proj \bgam$-approximation, $\proj \bgam$-resolution, $\proj \bgam$-exactness,
    $\proj\bgam$-dimension, and $\proj\bgam$-global dimension coincide with the usual notions of $\bgam$-projective precover, $\bgam$-projective resolution, $\bgam$-projective dimension, and global dimension of $\bgam$, respectively.
\end{eg}

Often, the subcategory $\cc\subseteq \ca$ will be the \emph{additive closure} $\add\cx$ of a collection of objects $\cx \subseteq \ca$, that is, the full additive subcategory of $\ca$ consisting of all objects that are isomorphic to a direct summand of a direct sum of objects in $\cx$.

\smallskip

\noindent
\begin{center}
    \fbox{%
        \begin{minipage}{0.98\linewidth}
\begin{notation}
    If $\cp$ is a poset, we let $\addspread\cp$ denote the additive closure of the set of finitely presented spread representations.
    When the poset $\cp$ is clear from the context, we write ``spread'' instead of $\addspread\cp$.
    This means that we write spread-approximation, spread-resolution, spread-exact, spread-dimension, and spread-global
    for the corresponding notions relative to $\addspread\cp$.
\end{notation}
\end{minipage}}
\end{center}

We conclude with a remark connecting relative homological algebra as described in this section to exact structures of the form $\cf_\cx$ as described in the introduction. See \cite{draxler-reiten-smalo-solberg} for further details.

\begin{rem}
    \label{remark:relative-precov-surjective}
    Let $\cx$ be a class of objects of $\ca$ such that every object of $\ca$ admits a finite $\add\cx$-resolution.
    Assume, moreover, that $\proj\ca \subseteq \add \cx$.
    Then, the following are true and easy to check:
    \begin{itemize}
    \item The projective objects of the exact structure $\cf_\cx$ are precisely the objects in $\add\cx$.
    \item Projective resolutions relative to $\cf_\cx$ are the same as the $\add\cx$-resolutions.
    \item The global dimension of $\cf_\cx$ is the same as the $\add\cx$-global dimension.
    \end{itemize}
    %Suppose $\ca$ is an Abelian category with enough projectives.
    %If $\cc \subseteq \ca$ is a full subcategory containing $\proj\ca$, then $\cc$-approximations are epimorphic, and $\cc$-resolutions are exact sequences.
    This is one reason why several references on relative homological algebra assume $\proj\ca \subseteq \add\cx$.
    This assumption is however not necessary for
    some of the results in \cref{section:bounding-relative-global-dimension}.
\end{rem}

\subsection{Irreducible morphisms and the radical}
\label{section:irreducible-morphisms-radical}

A morphism $f:M \to N$ of a category $\cc$ is \textit{$\cc$-irreducible} if it is neither a section nor a retraction, and is such that for any factorization $f = hg$ in $\cc$, either $g$ is a section or $h$ is a retraction.
The reason why we say ``$\cc$-irreducible'' and not just ``irreducible'' is that, in our applications, the category $\cc$ is a full subcategory of a larger category $\ca$, as in \cref{section:relative-homological-algebra}.

\begin{eg}
    \label{example:irreducible-proj-poset-case}
    Let $\cp$ be a finite poset and consider the full subcategory $\proj\cp \subseteq \rep\cp$ of projective representations, so that
    $\proj \cp = \add{\{ \K_{\uparrow i} \}_{i \in \cp}}$, by \cref{lemma:proj-is-spread}.
    If $i \leq j \in \cp$, then there is a non-zero morphism $\K_{\us j} \to \K_{\us i}$, and this morphism is $\proj\cp$-irreducible if and only if $j$ is a cover of $i$.
\end{eg}
\begin{defn}\label{def: cat rad}
    The \emph{radical} of a linear Krull--Schmidt category $\cc$
    is the additive subfunctor $\rad_\cc: \cc^{op} \times \cc \to \kvec$ of $\Hom_\cc(-,-)$, characterized by the following properties:
    \begin{enumerate}
        \item If $X ,Y \in \cc$ are indecomposable then
        \[
        \rad_\cc(X,Y) =\{ f\in \Hom_\cc(X,Y) :~ f \text{ is not invertible}\}.
        \]
        \item $\rad_\cc(X \oplus Z, Y) = \rad_\cc(X,Y) \oplus \rad_\cc( Z, Y)$, for any $X,Y,Z \in \cc$.
        \item $\rad_\cc(X, Y \oplus Z) = \rad_\cc(X,Y) \oplus \rad_\cc( X, Z)$, for any $X,Y,Z \in \cc$.
    \end{enumerate}
    Define a subfunctor $\rad_\cc^2(-,-) \subseteq \rad_\cc(-,-)$ by
    \begin{align*}
         \rad_\cc^2(M,N) &= \left\{ M\xrightarrow{g} Z\xrightarrow{h} N  ~:~ g\in \rad_\cc(M,Z) \text{ and }h\in \rad_\cc(Z,N) \right\} 
        \\&= \bigcup_{Z\in\cc}\rad_\cc(M,Z) \circ \rad_\cc(Z,N) ~. 
    \end{align*}
\end{defn} 

The following properties of the radical are standard.

\begin{lem}[{\cite[Ch.~V,~Prop.~7.1]{ARS}}]
\label{lem: cat rad properties}
    Let $\cc$ be a linear Krull--Schmidt category, and let $M,N \in \cc$. 
    \begin{enumerate}
        \item The vector spaces $\rad^2_\cc(M,N) \subseteq \rad_\cc(M,N) \subseteq \Hom_\cc(M,N)$ are right $\End_\cc(M)$-modules. An endomorphism $\gamma:M\to M$ acts on $f:X\to Y$ by pre-composition.
        
        \item The radical $\rad_\cc(M,M)$ is the Jacobson radical of the $\K$-algebra $\End_\cc(M)$. Furthermore, $\rad_\cc(M,N) = \Hom_\cc(M,N)\circ \rad_\cc(M,M)$.

        \item If $M$ is indecomposable, then $\rad_\cc(M,M)$ is the unique maximal ideal of $\End_\cc(M)$.
        Therefore, the quotient ${\Hom_\cc(M,M)}/{\rad_\cc(M,M)}$ is a division ring, and the nonzero elements are in bijection with the automorphisms of $M$.

        %\item The radical $\rad_\cc(M,M)$ annihilates the quotient ${\rad_\cc(M,N)}/{\rad_\cc^2(M,N)}$. 
    \end{enumerate}
\end{lem}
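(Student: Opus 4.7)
The plan is to use the Krull--Schmidt decomposition of $M$ and $N$ to reduce each claim to the local ring structure of $\End_\cc(X)$ for $X$ indecomposable. Throughout, I fix decompositions $M \cong \bigoplus_{k=1}^p M^{(k)}$ and $N \cong \bigoplus_{\ell=1}^q N^{(\ell)}$ into indecomposables (with repetitions allowed), so that any $f : M \to N$ corresponds to a matrix $(f_{\ell k})$ with $f_{\ell k} \in \Hom_\cc(M^{(k)}, N^{(\ell)})$. By the additivity axioms in \cref{def: cat rad}, $f$ lies in $\rad_\cc(M,N)$ if and only if each entry $f_{\ell k}$ lies in $\rad_\cc(M^{(k)}, N^{(\ell)})$, which equals $\Hom_\cc(M^{(k)}, N^{(\ell)})$ when $M^{(k)} \not\cong N^{(\ell)}$ (every such morphism is automatically non-invertible) and equals the set of non-units when $M^{(k)} \cong N^{(\ell)}$.

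For part (1), I verify closure of $\rad_\cc(M,N)$ under pre-composition by $\gamma \in \End_\cc(M)$ block by block. The $(k,\ell)$-entry of $f \circ \gamma$ is $\sum_{k'} f_{\ell k'} \gamma_{k'k}$; when $M^{(k)} \not\cong N^{(\ell)}$ this lies in $\rad_\cc$ trivially. Otherwise, a short case split shows each summand is non-invertible in the local ring $\End_\cc(N^{(\ell)})$: either the factor $f_{\ell k'}$ is itself non-invertible (when $M^{(k')} \cong N^{(\ell)}$), or the composition routes through a non-isomorphic indecomposable (when $M^{(k')} \not\cong N^{(\ell)}$). Since the non-units form a two-sided ideal of a local ring, the sum remains non-invertible. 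The same argument proves closure of $\rad_\cc^2$ under pre-composition.

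Part (3) is essentially a restatement of the defining property of a Krull--Schmidt category: if $M$ is indecomposable then $\End_\cc(M)$ is local, so the non-units form the unique maximal two-sided ideal, which coincides with both the Jacobson radical and $\rad_\cc(M,M)$. The quotient $\End_\cc(M)/\rad_\cc(M,M)$ is thus a division ring, and its nonzero elements lift uniquely to automorphisms of $M$ modulo $\rad_\cc(M,M)$.

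For part (2), I identify $\rad_\cc(M,M)$ with $J(\End_\cc(M))$ by viewing $\End_\cc(M)$ as a generalized matrix ring over the local rings $\End_\cc(M^{(k)})$; a standard computation identifies the Jacobson radical of such a ring with the matrices whose diagonal blocks lie in each $J(\End_\cc(M^{(k)}))$ and whose off-diagonal blocks are arbitrary (being morphisms between non-isomorphic indecomposables, hence automatically non-invertible), matching the characterization of $\rad_\cc(M,M)$ from the setup. The main technical obstacle is the formula $\rad_\cc(M,N) = \Hom_\cc(M,N) \circ \rad_\cc(M,M)$: the inclusion $\supseteq$ follows from (the argument of) part (1), while the naive factorization $f = f \circ \id_M$ for $\subseteq$ fails because $\id_M \notin \rad_\cc(M,M)$. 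I would establish $\subseteq$ entrywise, using an isomorphism $\phi : M^{(k)} \to N^{(\ell)}$ to write $f_{\ell k} = \phi \circ (\phi^{-1} f_{\ell k})$ when $M^{(k)} \cong N^{(\ell)}$ (so that $\phi^{-1} f_{\ell k} \in \rad_\cc(M^{(k)}, M^{(k)})$), and invoking a Nakayama-type argument for the finitely generated right $\End_\cc(M)$-module $\Hom_\cc(M,N)$ over the semiperfect ring $\End_\cc(M)$ to absorb the remaining entries into the product $\Hom_\cc(M,N) \cdot J(\End_\cc(M))$. Reassembling the entrywise factorizations yields the global identity.
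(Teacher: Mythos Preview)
The paper does not prove this lemma; it is stated with a citation to \cite[Ch.~V,~Prop.~7.1]{ARS} and the remark that ``the following properties of the radical are standard.'' So there is no paper proof to compare against, and your write-up supplies strictly more than the paper does. Your arguments for part~(1), part~(3), and the identification $\rad_\cc(M,M) = J(\End_\cc(M))$ in part~(2) are correct and standard.

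There is, however, a genuine gap in your treatment of the formula $\rad_\cc(M,N) = \Hom_\cc(M,N)\circ \rad_\cc(M,M)$, and it cannot be repaired: the formula is \emph{false} in the stated generality. Take $\cc = \fmod\bigl(\K[x]/(x^2)\bigr)$, $M = \K$ the simple, and $N = \K[x]/(x^2)$. Then $\End_\cc(M) \cong \K$, so $\rad_\cc(M,M) = 0$ and hence $\Hom_\cc(M,N)\circ\rad_\cc(M,M) = 0$; but $M \not\cong N$ forces $\rad_\cc(M,N) = \Hom_\cc(M,N) \cong \K \neq 0$. Your ``Nakayama-type argument'' is precisely where this breaks: Nakayama's lemma says that a nonzero finitely generated module $V$ over a semiperfect ring satisfies $V \neq V\cdot J$, which is the \emph{opposite} of what you would need. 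In particular, when no indecomposable summand of $M$ is isomorphic to a summand of $N$, Nakayama shows the inclusion $\Hom_\cc(M,N)\circ\rad_\cc(M,M) \subseteq \rad_\cc(M,N) = \Hom_\cc(M,N)$ is strict whenever $\Hom_\cc(M,N)\neq 0$.

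The paper only invokes this formula in the proof of \cref{lemma:relative-radical-approximation-presentation} with $M = G$ and $N = X \in \cx$, so that $N$ is a direct summand of $M$. Under that extra hypothesis the formula \emph{does} hold, with a one-line proof: writing $M = N \oplus M'$ and letting $\pi = (1_N,0)\colon M \to N$ be the projection, any $f \in \rad_\cc(M,N)$ factors as $f = \pi \circ h$ with
\[
  h = \begin{pmatrix} f\,\iota_N & f\,\iota_{M'} \\ 0 & 0 \end{pmatrix} \in \rad_\cc(M,M),
\]
the top row lying in the radical because $\rad_\cc$ is a categorical ideal. So the lemma as stated is slightly misquoted, but the special case the paper actually uses is correct.
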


The following lemma is a simple generalization of \cite[Ch.~V,~Prop.~7.3]{ARS};
the proof is analogous, by replacing each instance of $\fmod\bgam$-irreducible with $\cc$-irreducible.
\begin{lem}\label{lem: irr iff rad}
    Let $\cc$ be a linear Krull--Schmidt category, and $X, Y \in \cc$ be indecomposables.
    Then 
    $f \in \Hom_\cc( X , Y)$ is $\cc$-irreducible if and only if 
    $f \in \rad_\cc(X,Y) \setminus \rad_\cc^2(X,Y)$. \qed
\end{lem}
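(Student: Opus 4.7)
The plan is to adapt the proof of \cite[Ch.~V,~Prop.~7.3]{ARS}, which treats the case $\cc = \fmod\bgam$, to an arbitrary linear Krull--Schmidt category $\cc$; the original argument uses only the locality of $\End_\cc(X)$ for $X$ indecomposable together with the general properties recalled in \cref{lem: cat rad properties}, so the generalization is essentially formal. Two observations will drive both directions: (a) by \cref{def: cat rad}(1), a morphism between indecomposables of $\cc$ lies in $\rad_\cc$ if and only if it is not an isomorphism; and (b) a section (resp.\ retraction) between indecomposables is automatically an isomorphism, since if $f:X\to Y$ is a section with left inverse $s$, then $fs \in \End_\cc(Y)$ is an idempotent in a local ring, hence equals $0$ or $\id_Y$, and $fs=0$ would force $f=0$ and contradict $sf=\id_X$.

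For the forward implication, if $f:X\to Y$ is $\cc$-irreducible, then $f$ is not a section, so by (b) it is not an isomorphism, and hence by (a) it lies in $\rad_\cc(X,Y)$. To show $f \notin \rad^2_\cc(X,Y)$, I would argue by contradiction: given a factorization $f = hg$ with $g:X\to Z$ and $h:Z\to Y$ both in $\rad_\cc$, irreducibility forces $g$ to be a section or $h$ to be a retraction. In the section case, decomposing $Z=\bigoplus_i Z_i$ into indecomposables and writing a left inverse of $g$ as $s=(s_i)$, the equation $\sum_i s_i g_i = \id_X$ lies in the local ring $\End_\cc(X)$, so some $s_i g_i$ is a unit; this makes the component $g_i:X\to Z_i$ a section between indecomposables, hence an isomorphism by (b), contradicting $g_i \in \rad_\cc(X,Z_i)$. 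The retraction case is symmetric.

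For the reverse implication, suppose $f \in \rad_\cc(X,Y) \setminus \rad^2_\cc(X,Y)$. By (a) and (b), $f$ is neither a section nor a retraction. Given any factorization $f = hg$ through some $Z = \bigoplus_i Z_i$, write $f = \sum_i h_i g_i$. If every $g_i$ and every $h_i$ lay in $\rad_\cc$, then each $h_i g_i$ would lie in $\rad^2_\cc(X,Y)$, and by additivity of this subspace (see below) so would $f$, contradicting our assumption. Hence some component $g_i$ or $h_i$ fails to lie in $\rad_\cc$, and is therefore an isomorphism by (a); this immediately produces a left inverse of $g$ (namely $g_i^{-1}\pi_i$) or a right inverse of $h$ (namely $\iota_i h_i^{-1}$).

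The one routine point worth noting is that $\rad^2_\cc$ is closed under addition: two factorizations through $Z_1$ and $Z_2$ combine into a single factorization through $Z_1 \oplus Z_2$ whose legs remain in $\rad_\cc$ by parts (2) and (3) of \cref{def: cat rad}. I do not anticipate any substantive obstacle --- the proof is essentially a transcription of the classical argument once observations (a) and (b) are in place.
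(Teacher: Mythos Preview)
Your proposal is correct and follows exactly the approach the paper indicates: the paper does not write out a proof but simply states that the argument of \cite[Ch.~V,~Prop.~7.3]{ARS} goes through verbatim upon replacing $\fmod\bgam$-irreducible with $\cc$-irreducible, which is precisely what you have done. The details you supply (locality of endomorphism rings forcing sections/retractions between indecomposables to be isomorphisms, and the componentwise argument through a decomposition of the intermediate object) are the standard ones and are all sound.
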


The following result is used in \cref{section:bounding-relative-global-dimension}.

\begin{lem}
    \label{lem: fully faithful preserve minimality}
    Let $\phi:\ca \to\cb$ be a fully faithful functor.
    A morphism $f \in \Hom_\ca(M, N)$ is right minimal if and only if $\phi f \in \Hom_\cb(\phi M, \phi N)$ is right minimal.
    If $\ca$ and $\cb$ are Krull--Schmidt categories, the functor $\phi$ induces an isomorphism $\rad_\ca(M,N) \cong \rad_\cb(\phi M, \phi N)$.
\end{lem}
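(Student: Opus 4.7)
The plan is to handle the two assertions separately, with the first being an essentially formal consequence of the defining property of right minimality, and the second relying on the Krull--Schmidt structure to reduce to indecomposable summands.

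For the claim about right minimality, suppose first that $f:M\to N$ is right minimal in $\ca$. Given $h : \phi M \to \phi M$ with $(\phi f) \circ h = \phi f$, use fullness to write $h = \phi g$ for some $g : M \to M$, and use faithfulness to conclude $f \circ g = f$ from $\phi(f \circ g) = \phi(f)$. Right minimality of $f$ then gives that $g$ is an isomorphism, and since fully faithful functors reflect isomorphisms, $h = \phi g$ is an isomorphism. The converse is the same argument in the other direction: if $f \circ g = f$, then $(\phi f)\circ(\phi g) = \phi f$, so $\phi g$ is an isomorphism by right minimality of $\phi f$, whence $g$ is an isomorphism by full faithfulness.

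For the radical statement, I would first observe that $\phi$ preserves indecomposability: since $\phi$ induces a $\K$-algebra isomorphism $\End_\ca(X) \cong \End_\cb(\phi X)$, the endomorphism ring of $\phi X$ is local whenever that of $X$ is. Combined with the fact that fully faithful functors reflect and preserve isomorphisms, this shows that for indecomposable $X, Y \in \ca$, the isomorphism $\Hom_\ca(X,Y) \cong \Hom_\cb(\phi X, \phi Y)$ restricts to an isomorphism between the subspaces of non-invertible morphisms, which by \cref{def: cat rad}(1) are precisely $\rad_\ca(X,Y)$ and $\rad_\cb(\phi X, \phi Y)$.

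To pass to arbitrary $M, N$, decompose $M = \bigoplus_i M_i$ and $N = \bigoplus_j N_j$ into indecomposables. Then $\phi M \cong \bigoplus_i \phi M_i$ and $\phi N \cong \bigoplus_j \phi N_j$ are Krull--Schmidt decompositions in $\cb$ (using that $\phi$ is $\K$-linear, hence additive, and preserves indecomposability). Applying the additivity properties (2) and (3) of \cref{def: cat rad} on both sides gives
\[
    \rad_\ca(M,N) \cong \bigoplus_{i,j} \rad_\ca(M_i, N_j)
    \quad\text{and}\quad
    \rad_\cb(\phi M, \phi N) \cong \bigoplus_{i,j} \rad_\cb(\phi M_i, \phi N_j),
\]
and the isomorphism $\Hom_\ca(M,N) \cong \Hom_\cb(\phi M, \phi N)$ restricts componentwise to the indecomposable case treated above, yielding the desired identification. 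There is no substantial obstacle here; the only subtlety is verifying that the Krull--Schmidt decomposition of $\phi M$ really comes from that of $M$, which is immediate from preservation of indecomposability and of finite biproducts.
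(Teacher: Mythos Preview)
Your proof is correct and follows essentially the same approach as the paper's: both handle right minimality via the endomorphism ring isomorphism induced by full faithfulness, and both reduce the radical statement to indecomposables (where the radical consists of non-isomorphisms) using the additivity axioms in \cref{def: cat rad}. Your version is more explicit about why $\phi$ preserves indecomposability and why the decomposition of $\phi M$ comes from that of $M$, whereas the paper simply writes ``by additivity'' for the reduction step.
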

\begin{proof}
    We start with the first statement.
    Since $\phi$ is fully faithful, the functor $\phi$ induces an isomorphism $\End_\ca(M) \cong \End_\cb(\phi M)$.
    Let $\phi h \in \End_\cb(\phi M)$ and suppose $\phi f= (\phi f) (\phi h) = \phi( f h)$. Since $\phi$ is faithful, this occurs exactly when $f = f h$.
    The morphism $h$ is an automorphism if and only if $\phi h$ is an automorphism, so the result follows. 

    For the second statement, we may assume that $M$ and $N$ are indecomposable, by additivity.
    In this case, we have that $\rad_\ca(M,N)$ and $\rad_\cb(\phi M, \phi N)$ are exactly the non-isomorphisms, and $\phi$ induces an equivalence between these since it is fully faithful.
\end{proof}

\subsection{Projectivization}\label{section:projectivization}
In our cases of interest, we do homological algebra over a category $\ca = \fmod \blam$ relative to a subcategory $\cc = \add{\cx}$, where $\blam$ is a finite-dimensional algebra, and $\cx$ is a finite collection of $\bgam$-modules.

In this setup, which we formalize next, relative homological algebra can be reduced to standard homological algebra in the category of modules over a different finite-dimensional algebra.

\begin{assumptions}
    \label{definition:projectivization-setup}
    The \emph{projectivization setup} is one in which
    we are given a finite-dimensional algebra $\blam$ and a finite set $\cx$ of pairwise non-isomorphic indecomposables of $\fmod \blam$, and define
    \[
        G \coloneqq \bigoplus_{X\in \cx} X\, ,\;\;\;\;\;
        \bgam \coloneqq \End_\blam(G) \, ,\;\;\;\;\;
        H \coloneqq \Hom_\blam(G,-) \,.
    \]
\end{assumptions}

Let us assume that we are in the projectivization setup.
Since $H$ is an additive functor, the algebra $\bgam$ decomposes as a direct sum of $\bgam$-modules $\bgam \cong \bigoplus_{X \in \cx} H X$
and the category $\fmod\bgam$ is an Abelian, linear Krull--Schmidt category.

The following standard result allows us to reduce the homological algebra of $\fmod \blam$ relative to $\add \cx$ to the standard homological algebra of $\fmod \bgam$:

\begin{prop}[{\cite[Ch.~II,~Prop.~2.1]{ARS}}]
\label{add G resolution thm}
    In the projectivization setup (\cref{definition:projectivization-setup}), the following hold:
    \begin{enumerate}
        \item The algebra $\bgam$ is finite dimensional, and, for every $M \in \fmod\blam$, we have that $H M$ is a right $\bgam$-module, where an endomorphism $\gamma\in \bgam$ acts on a morphism $g:G\to M$ by pre-composition.

        \item For any $A \in \add{\cx}$ and any $M \in \fmod \blam$, the functor $H$ induces a linear isomorphism
        $\Hom_\blam( A, M) \cong~ \Hom_\bgam( H A,  H M)$, which is functional in $A$ and in $M$.
        \item The functor $H$ restricts to an equivalence of categories between $\add{\cx}$ and $\proj{\bgam}$.
        In particular, the module $H X$ is an indecomposable projective $\bgam$-module for every $X\in \cx$. 

        \item For any $A \in \add{\cx}$ and any $M \in \fmod \blam$, a morphism $A \to M$ is an $\add\cx$-approximation if and only if $H A \to H M$ is a $\proj\bgam$-approximation.

        \item For every $M \in \fmod \blam$, we have $\pdim{\acx}{M} = \pdim{\proj\bgam}{H M}$.
\end{enumerate}
\end{prop}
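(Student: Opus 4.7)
The plan is to prove the five items in sequence, all of them flowing from a single Yoneda-type identification. Item (1) is essentially formal: $\bgam = \End_\blam(G)$ embeds into the finite-dimensional algebra $\End_\K(G)$, hence is finite dimensional, and the right $\bgam$-action on $HM$ by precomposition is unambiguous. For item (2), I would first verify the case $A = G$, where $HG = \bgam$ as a right $\bgam$-module and the evaluation map $\Hom_\bgam(\bgam, HM) \to HM$ is the classical Yoneda isomorphism. Both $\Hom_\blam(A,M)$ and $\Hom_\bgam(HA,HM)$ are additive in $A$, so the isomorphism extends formally to $A = G^n$; since every object of $\add \cx = \add G$ is a direct summand of some $G^n$ (because $G = \bigoplus_{X\in\cx} X$), the isomorphism extends by splitting off summands to all of $\add\cx$. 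Naturality in $A$ and $M$ is then automatic from the construction.

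Item (3) is an immediate consequence: $H$ is additive and takes $G$ to $\bgam$, so it maps $\add\cx = \add G$ into $\add \bgam = \proj \bgam$; fully faithfulness is item (2) restricted to $M \in \add \cx$; and essential surjectivity follows since every indecomposable projective $\bgam$-module is a summand of $\bgam = HG$, whose preimage under the already-established equivalence on morphisms must lie in $\add G$. Item (4) is then a direct translation: by definition, a morphism $A \to M$ is a right $\add\cx$-approximation exactly when $\Hom_\blam(X,A) \to \Hom_\blam(X,M)$ is surjective for every $X \in \cx$; by (2) this is the same as the surjectivity of $\Hom_\bgam(HX, HA) \to \Hom_\bgam(HX, HM)$ for every $X \in \cx$; and by (3) the modules $\{HX\}_{X \in \cx}$ form a complete set of indecomposable projective $\bgam$-modules, so this is the same as $HA \to HM$ being a right $\proj\bgam$-approximation.

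For item (5) I would induct on resolution length. Given an $\add\cx$-resolution $\cdots \to C_1 \to C_0 \to M \to 0$, the left-exactness of the Hom functor $H$ gives $H \ker f_i = \ker H f_i$ at every stage; item (4) then shows that $HC_{i+1} \to \ker Hf_i$ is a $\proj\bgam$-approximation, and being a map from a projective onto a submodule that is moreover an approximation, it is surjective. Splicing these produces a projective resolution of $HM$ of the same length. Conversely, any projective resolution of $HM$ lifts, stage by stage, to an $\add\cx$-resolution of $M$ by using the equivalence in (3) to realize each projective as an object of $\add\cx$ and item (4) to recognize the lifted maps as $\add\cx$-approximations; the lengths must then coincide.

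The step I expect to be the most delicate is precisely this compatibility in (5): ensuring that $H$ commutes with kernels (via left-exactness) and that approximations correspond under $H$ (via items (3) and (4)) is what lets one translate resolutions back and forth with no change in length. Every other step is essentially a direct unwinding of definitions once the Yoneda isomorphism of (2) is in hand.
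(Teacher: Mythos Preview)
The paper does not give its own proof of this proposition: it is stated with a citation to \cite[Ch.~II,~Prop.~2.1]{ARS} and no argument is supplied in the text. Your proposal is a correct and complete sketch of the standard proof, and it is in the spirit of the cited reference: items (1)--(3) are exactly the content of the projectivization result in \cite{ARS}, and items (4)--(5) are the routine extension to relative approximations and resolutions that the paper needs. There is nothing to compare against beyond the citation, and your argument would serve as a self-contained replacement for it.
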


An important consequence of \cref{add G resolution thm}, and the existence of projective covers \cite[Ch.~I,~Thm.~4.2]{ARS}, is the existence of minimal $\add\cx$-approximations.

\begin{cor}
    \label{cor:approx exist krull}
    In the projectivization setup (\cref{definition:projectivization-setup}), every $M\in \fmod\blam$ admits a minimal $\add\cx$-approximation, and
    every $\add\cx$-approximation factors through a minimal one.
    \qed
\end{cor}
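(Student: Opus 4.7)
The plan is to transport the problem to the finite dimensional algebra $\bgam$ via the projectivization functor $H$, where minimal right $\proj\bgam$-approximations are simply the classical projective covers.

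Given $M\in\fmod\blam$, I would first consider the $\bgam$-module $HM$. Since $\bgam$ is finite dimensional by \cref{add G resolution thm}(1), the module $HM$ admits a projective cover $p : P \to HM$, which is a minimal right $\proj\bgam$-approximation. Using the equivalence $H : \add\cx \to \proj\bgam$ of \cref{add G resolution thm}(3), I would lift $P$ to an $A \in \add\cx$ with a chosen isomorphism $HA \cong P$, and then use the bijection $\Hom_\blam(A, M) \cong \Hom_\bgam(HA, HM)$ from \cref{add G resolution thm}(2) to select the unique $f : A \to M$ corresponding to $p$. By \cref{add G resolution thm}(4), this $f$ is already an $\add\cx$-approximation of $M$.

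It remains to verify that $f$ is right minimal and that every $\add\cx$-approximation factors through it. For minimality, suppose $h : A \to A$ satisfies $f \circ h = f$. Functoriality of the bijection in \cref{add G resolution thm}(2) in its first slot (precomposition by $h$ corresponds to precomposition by $Hh$) transports this equation to $p \circ Hh = p$ in $\fmod\bgam$. Right minimality of the projective cover $p$ forces $Hh$ to be an isomorphism, and the equivalence of \cref{add G resolution thm}(3) then forces $h$ itself to be an isomorphism. The factorization property is immediate from the definition: given any $\add\cx$-approximation $g : C \to M$, the defining surjectivity of $\Hom_\blam(C, f)$ applied to $g$ (valid since $C \in \add\cx$) produces $h : C \to A$ with $f \circ h = g$.

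The argument is essentially mechanical given \cref{add G resolution thm}; the only substantive input is the existence of projective covers for finitely generated modules over a finite dimensional algebra, which is standard Krull--Schmidt theory and hence not a real obstacle. The one subtle point to keep straight is that right minimality of $f$ must be tested against arbitrary endomorphisms of $A$ in $\fmod\blam$, not just in $\add\cx$; but this causes no problem because $\add\cx$ is a full subcategory of $\fmod\blam$, so $\End_\blam(A) = \End_{\add\cx}(A)$ and the two notions of right minimality coincide.
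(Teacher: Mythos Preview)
Your argument is correct and follows exactly the approach the paper sketches: the paper states the corollary as an immediate consequence of \cref{add G resolution thm} together with the existence of projective covers \cite[Ch.~I,~Thm.~4.2]{ARS}, and your write-up simply spells out this transport along $H$ in detail. Nothing more to add.
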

%\begin{proof}
%    Since $H M$ is a module over the finite dimensional algebra $\bgam =HG$, there exists a surjection $\bgam^m \onto HM$, for some $m\in \N$, and,
%    by \cref{add G resolution thm}(4), the corresponding morphism $G^m \to M$ is an $\add\cx$-approximation.
%    To prove existence of minimal $\add \cx$-approximations, we use \cite[Ch.~I,~Thm.~2.2]{ARS}, which says that for any $f\in \Hom_\blam(M,N)$ there exists a decomposition $M = M_1 \oplus M_2$ such that $f|_{M_1}$ is right minimal and $g|_{M_2} =0$.
%\end{proof}

We conclude with an important definition for bounding relative global dimension.

\begin{defn}
In the projectivization setup (\cref{definition:projectivization-setup}), 
the \emph{relative simple} modules are the simple $\bgam$-modules, that is, the modules $S_X \in \fmod \bgam$, with $X \in \cx$ and $S_X := {HX}/{\rad_\Lambda(G,X)}$.
\end{defn}

\noindent Note that $S_X$ is not necessarily in the image of $H$.

\section{Left Kan extension of spread-approximations}
\label{section floor functor}

In this section, we prove \cref{theorem:main-extension-theorem} and \cref{theorem:answers-1-2-lattice}, in that order.
We start with a discussion of the main constructions involved.

\subsection{The general setup}
\label{section:general-setup-stretching}
Since many similar constructions appear in the literature, let us first recall the main setup and notation.
For the sake of simplicity, we tell the story for finite posets $\cq$ and $\cp$, but several constructions generalize to the case of upper semilattices; see \cref{section:left-kan-extension-section}.
See \cref{figure:example-aligned-grid-inclusion-and-floor}
for an illustration of some of the main definitions that are relevant to this section.

Let $\psi : \cq \to \cp$ be a poset morphism, and define the \emph{restriction functor} $\res_\psi : \rep \cp \to \rep \cq$ to be given by precomposition with~$\psi$, that is, for $M : \cq \to \kvec$ we let $\res_\psi M = M \circ \psi$.
The functor $\res_\psi$ admits a left adjoint $\Lan_\psi$, as well as a right adjoint $\Ran_\psi$, given by left Kan extension and right Kan extension along $\psi$, respectively.
If, furthermore, the poset morphism $\psi$ is part of an adjunction $\psi \dashv \floor{-}_\psi$ (also known as Galois connection),
then
$\Lan_\psi = \res_{\floor{-}_\psi}$ and
$\Lan_\psi$ admits a left adjoint $\Lan_{\floor{-}_\psi}$.
We represent this in the following diagram, where each functor is the left adjoint of the functor below it.
\[
\begin{tikzpicture}
    \matrix (m) [matrix of math nodes,row sep=1.5em,column sep=12em,minimum width=2em,nodes={text height=1.75ex,text depth=0.25ex}]
    {
        \rep \cp      & \rep \cq \\};
    \path[line width=0.75pt, ->]
    (m-1-1) edge [bend left=36, above] node {$\;\;\;\;\Lan_{\floor{-}_\psi}$} (m-1-2)
    (m-1-2) edge [bend left=-12, above] node {$\Lan_\psi$} (m-1-1)
    (m-1-1) edge [bend left=-12, above] node {$\res_\psi$} (m-1-2)
    (m-1-2) edge [bend left=36, above] node {$\Ran_\psi$} (m-1-1)
    ;
\end{tikzpicture}
\]

%We refer to $\Lan_\psi$ as the \emph{stretching functor} associated to $\psi$.
For context, we mention that the functor $\Lan_\psi$ is also known as the induction functor,
and the functor $\Ran_\psi$ is also known as the coinduction functor \cite{simson,aoki-escolar-tada}.

The functor $\Lan_{\floor{-}_\psi} : \rep \cp \to \rep \cq$ is important in our approach, and in \cref{lem:contraction is left adjoint} we give sufficient conditions for its existence.
This functor was already studied in \cite{BBH2023}, where it is called the \emph{contraction functor}.
Here, we prove a new, important property of the contraction functor, namely that it maps spread-decomposable representations to spread-decomposable representations (\cref{prop: fls respect spread}).
Note we do not make use of the functor $\Ran_\psi$, but we mention it for completeness, since it plays an important role in, e.g., \cite{aoki-escolar-tada}.

\begin{figure}
    \includegraphics[width=.9\linewidth]{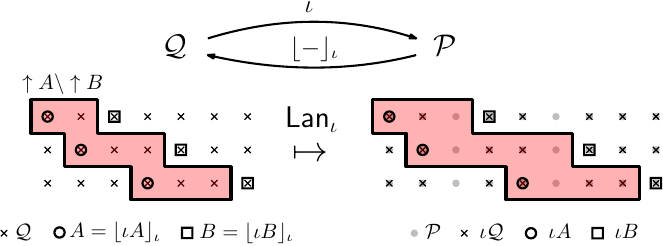}
    \caption{\emph{Left.} A finite grid poset $\cq = [7] \times [3]$ and a spread $\usminus{A}{B} \subseteq \cq$.
    \emph{Right.} A finite grid poset $\cp = [9] \times [3]$.
    The crosses denote the image of an origin aligned grid inclusion $\iota : \cq \to \cp$ (\cref{section:aligned-grid-inclusion}), which is in particular an upper semilattice morphism (\cref{lemma:aligned-grid-inclusion-semilattice-morphism}).
    Note that $\cp = \us{\iota \cq}$, and that $\floor{-}_\iota$ is a retraction for $\iota$, as in \cref{lemma:retraction-for-f}.}
    \label{figure:example-aligned-grid-inclusion-and-floor}
\end{figure}

\subsection{Left Kan extension along an upper semilattice morphism}
\label{section:left-kan-extension-section}

Recall that the definitions of upper semilattice, of full poset morphism, and of upper semilattice morphism are in \cref{section:posets-of-interest} and \cref{section:poset-morphisms}.
%\cref{theorem:answers-1-2-lattice}
%is an easy consequence of \cref{theorem:main-extension-theorem}, so let us describe the structure of the proof of 
%\cref{theorem:main-extension-theorem}.

\begin{lem}
    \label{lemma:retraction-for-f}
    Let $\psi : \cq \to \cp$ be a full, upper semilattice morphism, with $\cq$ finite and $\us{\psi(\cq)} = \cp$.
    The map ${\floor -}_\psi : \cp \to \cq$ given by ${\floor p}_\psi \coloneqq \vee \{q \in \cq : \psi(q) \leq p \}$
    is well-defined and a poset morphism.
    Moreover, the morphism ${\floor -}_\psi$ is the right adjoint of $\psi$, and a retraction for $\psi$.
\end{lem}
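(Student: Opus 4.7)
The plan is to verify each claim in turn, with the well-definedness of $\floor{-}_\psi$ coming first, since monotonicity, the adjunction, and the retraction property all presuppose it.

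First, I would show that for each $p \in \cp$ the set $S_p \coloneqq \{q \in \cq : \psi(q) \leq p\}$ admits a join in $\cq$. Non-emptiness follows from $\us{\psi(\cq)} = \cp$: some $q_0 \in \cq$ must satisfy $\psi(q_0) \leq p$. Finiteness is immediate from $\cq$ being finite. Hence the join $\vee S_p \in \cq$ exists by the upper semilattice hypothesis on $\cq$, so $\floor{p}_\psi$ is well-defined. Monotonicity is then a one-line check: if $p \leq p'$, then $S_p \subseteq S_{p'}$, and hence $\floor{p}_\psi = \vee S_p \leq \vee S_{p'} = \floor{p'}_\psi$.

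Next, I would establish the Galois connection $\psi(q) \leq p \iff q \leq \floor{p}_\psi$, which is the definition of $\floor{-}_\psi$ being right adjoint to $\psi$. The forward direction is definitional: $\psi(q) \leq p$ means $q \in S_p$, so $q \leq \vee S_p = \floor{p}_\psi$. For the reverse direction, the key step is to apply $\psi$ to both sides of $q \leq \floor{p}_\psi = \vee S_p$ and use that $\psi$ preserves finite non-empty joins (this is where the upper semilattice morphism hypothesis is used):
\[
    \psi(q) \;\leq\; \psi(\vee S_p) \;=\; \vee\, \psi(S_p) \;\leq\; p,
\]
where the last inequality holds because every element of $\psi(S_p)$ is, by construction of $S_p$, bounded above by $p$.

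Finally, I would prove $\floor{\psi(q)}_\psi = q$ using fullness. Unfolding the definition,
\[
    \floor{\psi(q)}_\psi \;=\; \vee \{q' \in \cq : \psi(q') \leq \psi(q)\}.
\]
By fullness of $\psi$, the condition $\psi(q') \leq \psi(q)$ is equivalent to $q' \leq q$, so the set being joined is exactly the principal downset $\ds q$, whose join is~$q$ itself. The only step that requires any care is the well-definedness, since all three remaining assertions (monotonicity, the adjunction, the retraction identity) are short formal consequences of the defining formula together with exactly one hypothesis each (monotonicity from nothing extra, the adjunction from $\psi$ preserving joins, the retraction from $\psi$ being full). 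Thus no step is genuinely difficult; the content of the lemma is the observation that the three hypotheses on $\psi$ are precisely what is needed to make the obvious candidate $\floor{-}_\psi$ behave as expected.
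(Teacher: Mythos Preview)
Your proof is correct and follows essentially the same approach as the paper's: well-definedness from finiteness and $\us{\psi(\cq)}=\cp$, monotonicity from $S_p\subseteq S_{p'}$, the adjunction from $\psi$ preserving joins, and the retraction from fullness. The only cosmetic difference is that for the retraction you identify the set $\{q':\psi(q')\leq\psi(q)\}$ directly as $\ds q$ (whose join is $q$), whereas the paper argues that $q$ lies in the set and is an upper bound for it.
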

%\begin{lem}
%    \label{lemma:retraction-for-f}
%    Let $\psi : \cq \to \cp$ be a full, upper semilattice morphism, with $\cq$ finite and $\us{\psi(\cq)} = \cp$.
%    The map ${\floor -}_\psi : \cp \to \cq$ given by ${\floor p}_\psi \coloneqq \vee \{q \in \cq : \psi(q) \leq p \}$
%    is well-defined, a poset morphism, and a retraction for $\psi$, in the sense that ${\floor -}_\psi \circ \psi = \id_\cq : \cq \to \cq$.
%\end{lem}
\begin{proof}
    The map is well-defined since the set
    $\{q \in \cq : \psi(q) \leq p \}$ is finite and non-empty for every $p \in \cp$, since $\cq$ is finite, and for every $p \in \cp$ there exists $q \in \cq$ with $\psi(q) \leq p$.
    The map is order-preserving since $p \leq p' \in \cp$ implies
    $\{q : \psi(q) \leq p\} \subseteq \{q' : \psi(q') \leq p'\}$.

    We now show that ${\floor -}_\psi$ is the right adjoint of $\psi$, that is, that for every $q \in \cq$ and $p \in \cp$ we have $\psi(q) \leq p$ if and only if $q \leq \floor{p}_\psi$.
    If $\psi(q) \leq p$, then $q \in \{q' \in \cq : \psi(q') \leq p\}$, so $q \leq  \floor{p}_\psi$, by definition.
    If $q \leq \floor{p}_\psi$, then
    \[
        \psi(q) \leq \psi\left(\floor{p}_\psi\right)
                 = \psi\left(
                    \vee 
                    \left\{q' \in \cq : \psi(q') \leq p\right\}
                 \right)
                 =
                    \vee 
                    \left\{\psi(q') : \psi(q') \leq p\right\}
                \leq p,
    \]
    where we used the monotonicity of $\psi$, and the fact that it preserves non-empty finite joins.

    We conclude by proving that $\floor{\psi(q)}_\psi = q$ for every $q \in \cq$.
    Note that $q \in \{q' : \psi(q') \leq \psi(q)\}$, so it is sufficient to prove that every other $q' \in \cq$ such that $\psi(q') \leq \psi(q)$ satisfies $q' \leq q$, which follows from the fact that $\psi$ is full.
\end{proof}

As is standard, the adjunction $\psi \dashv {\floor -}_\psi$ allows us to give an explicit characterization of the left adjoint of the restriction functor $\res_\psi$ (this is used in, e.g., \cite[Sec.~4.3]{boo}, \cite[Thm.~2.7]{botnan-oppermann-oudot-scoccola}, or \cite[Prop.~7.12]{BBH2023}).

\begin{lem}
\label{lemma: section retraction between posets}
    Let $\psi : \cq \to \cp$ be a full, upper semilattice morphism, with $\cq$ finite and $\us{\psi(\cq)} = \cp$.
    The left adjoint $\Lan_\psi : \repq \to \repp$ of the restriction functor $\res_\psi :\repp \to \repq$ exists and is given by precomposition with ${\floor -}_\psi : \cp \to \cq$.
    The unit of the adjunction is induced by the equality ${\floor -}_\psi \circ \psi = \id_\cq : \cq \to \cq$.
    In particular $\Lan_\psi$ is exact and fully faithful.
\end{lem}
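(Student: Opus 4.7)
The plan is to derive \cref{lemma: section retraction between posets} as a direct consequence of \cref{lemma:retraction-for-f}, by combining it with the (contravariant) 2-functoriality of the construction $\kVec^{(-)}$. The statement is essentially standard once the poset-level adjunction $\psi \dashv \floor{-}_\psi$ is in hand; the only genuine check is that the candidate functor $\res_{\floor{-}_\psi}$ lands in the finitely presented subcategory.

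First, I would invoke \cref{lemma:retraction-for-f} to view $\psi \dashv \floor{-}_\psi$ as an adjunction of thin categories, observing that its unit $\id_\cq \Rightarrow \floor{-}_\psi \circ \psi$ is literally the identity natural transformation (because $\floor{-}_\psi \circ \psi = \id_\cq$ strictly), while its counit $\psi \circ \floor{-}_\psi \Rightarrow \id_\cp$ is the natural transformation determined by the inequalities $\psi(\floor{p}_\psi) \leq p$ in $\cp$. Applying the contravariant 2-functor $\kVec^{(-)}$ sends this to an adjunction $\res_{\floor{-}_\psi} \dashv \res_\psi$ between representation categories valued in $\kVec$, with unit induced by the original unit, hence again the identity. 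This is the standard mechanism by which a Galois connection on posets lifts to an adjunction on presheaves.

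Second, I would verify that $\res_{\floor{-}_\psi}$ restricts to finitely presented representations, so that it coincides with the desired $\Lan_\psi : \repq \to \repp$. By the adjunction characterization $q \leq \floor{p}_\psi \iff \psi(q) \leq p$, the representable $\K_{\us q}$ satisfies $\res_{\floor{-}_\psi}(\K_{\us q})(p) = \K$ exactly when $\psi(q) \leq p$; thus $\res_{\floor{-}_\psi}(\K_{\us q}) = \K_{\us \psi(q)}$, which is representable, and in particular finitely presented, in $\rep \cp$. Since $\cq$ is finite, any $M \in \repq$ admits a finite presentation by representables, and applying $\res_{\floor{-}_\psi}$ (which is exact, see the next paragraph) produces a finite presentation of its image by representables. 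Therefore $\res_{\floor{-}_\psi}(M) \in \repp$.

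Finally, the remaining claims follow immediately. Exactness of $\Lan_\psi = \res_{\floor{-}_\psi}$ holds because kernels and cokernels in representation categories valued in $\kVec$ are computed pointwise, so precomposition along any poset morphism is automatically exact. Full faithfulness of $\Lan_\psi$ is the standard categorical fact that a left adjoint whose unit is an isomorphism is fully faithful, applied here to the identity unit identified above. I do not anticipate any real obstacle: the argument is a translation of the Galois connection from \cref{lemma:retraction-for-f} into the representation categories, with the only routine verification being that finite presentability is preserved.
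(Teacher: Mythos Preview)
Your proposal is correct and follows essentially the same approach as the paper: both invoke \cref{lemma:retraction-for-f} to obtain the poset-level adjunction, lift it to the representation categories via the standard fact that precomposition along a section--retraction pair yields an adjunction with invertible unit, deduce exactness and full faithfulness from precomposition and the identity unit, and check preservation of finite presentability by observing that $\K_{\us q}$ is sent to $\K_{\us{\psi(q)}}$. The only difference is that you phrase the lifting step explicitly in terms of the contravariant 2-functor $\kVec^{(-)}$, whereas the paper simply cites it as a ``general, standard fact''.
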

\begin{proof}
    The fact that a section-retraction pair of poset morphisms induces an adjunction between the corresponding restriction functors is a general, standard fact,
    so the first statement follows from \cref{lemma:retraction-for-f}.
    The left adjoint $\Lan_\psi$ is fully faithful since the adjunction unit is an isomorphism, and it is exact since it is given by precomposition.

    We conclude by showing that $\Lan_\psi$ takes values in finitely presented representations.
    Since $\Lan_\psi$ is exact, it is enough to observe that it maps the indecomposable projective $\K_{\us{q}} \in \rep \cq$ to the indecomposable projective $\K_{\us{\psi(q)}} \in \rep \cp$.
\end{proof}

We now prove that $\Lan_\psi$ preserves spread representations.
For an illustration, see \cref{figure:example-aligned-grid-inclusion-and-floor}.

\begin{lem}
    \label{lemma:stretching-of-spread-is-spread}
    Let $\psi : \cq \to \cp$ be a full, upper semilattice morphism, with $\cq$ finite, and $\us{\psi(\cq)} = \cp$.
    For all $A, B \subseteq \cq$, such that for every $b \in B$ there exists $a \in A$ with $a \leq b$, we have
    \[
        \Lan_\psi\left(\,\K_{\usminus{A}{B}}\,\right) \; \cong \; \K_{\usminus{\psi(A)}{\psi(B)}}. 
    \]
    In particular, the functor $\Lan_\psi : \repq \to \repp$ maps spread-decomposable representations to spread-decomposable representations.
\end{lem}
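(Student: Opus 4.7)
The plan is to use the explicit description $\Lan_\psi = \res_{\floor{-}_\psi}$ from \cref{lemma: section retraction between posets}, reducing the computation to a pointwise verification. For $p \in \cp$, the representation $\Lan_\psi(\K_{\usminus{A}{B}})$ sends $p$ to $\K_{\usminus{A}{B}}(\floor{p}_\psi)$, which equals $\K$ if $\floor{p}_\psi \in \usminus{A}{B}$ and $0$ otherwise. I would then invoke the adjunction $\psi \dashv \floor{-}_\psi$ from \cref{lemma:retraction-for-f}---which gives $a \leq \floor{p}_\psi \iff \psi(a) \leq p$ for every $a \in \cq$---to translate each of the conditions $\floor{p}_\psi \in \us{A}$ and $\floor{p}_\psi \in \us{B}$ into $p \in \us{\psi(A)}$ and $p \in \us{\psi(B)}$, respectively. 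This yields the pointwise equality $\Lan_\psi(\K_{\usminus{A}{B}})(p) = \K_{\usminus{\psi(A)}{\psi(B)}}(p)$, and compatibility on morphisms is automatic since all nonzero transition maps of both indicator representations are the identity of $\K$.

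For the ``in particular'' assertion, I would reduce by additivity to showing that $\Lan_\psi(\K_S)$ is spread-decomposable for every spread $S \subseteq \cq$. Since $\cq$ is finite, $S$ is automatically finitely presented, so \cref{lemma:description-finitely-presented-spreads} gives $S = \usminus{\min S}{\cover S}$ with both sets finite. Setting $A = \min S$ and $B = \cover S$, the hypothesis that every element of $B$ lies above some element of $A$ holds by definition of covers, so the first part of the lemma yields $\Lan_\psi(\K_S) \cong \K_{\usminus{\psi(A)}{\psi(B)}}$. A routine check shows that any set of the form $\us C \setminus \us D$ is convex; it therefore partitions into a disjoint union of connected components, each of which is a spread, and $\K_{\usminus{\psi(A)}{\psi(B)}}$ decomposes correspondingly into a direct sum of spread representations. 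This decomposition is finite because $\Lan_\psi(\K_S)$ is itself finitely presented, also by \cref{lemma: section retraction between posets}.

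The main obstacle---mild, but worth flagging---is the careful manipulation of the Galois connection $\psi \dashv \floor{-}_\psi$ to push membership conditions across $\psi$; the rest is formal bookkeeping using standard properties of indicator representations and the description of finitely presented spreads.
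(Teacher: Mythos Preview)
Your proposal is correct and uses the same core idea as the paper: the description $\Lan_\psi = \res_{\floor{-}_\psi}$ from \cref{lemma: section retraction between posets} together with the adjunction $\psi \dashv \floor{-}_\psi$ to translate membership conditions across $\psi$. The only organizational difference is that the paper first reduces to upset representations via the exact sequence $\K_{\us B} \to \K_{\us A} \to \K_{\usminus{A}{B}} \to 0$ and cokernel preservation, and then does the pointwise check for $\us S$ alone, whereas you handle both conditions (membership in $\us A$ and non-membership in $\us B$) simultaneously in a single pointwise verification; your route is slightly more direct and avoids invoking exactness. For the ``in particular'' clause, both arguments appeal to \cref{lemma:description-finitely-presented-spreads} to write a spread as $\usminus{\min S}{\cover S}$; your explicit remark that the resulting convex set $\usminus{\psi(A)}{\psi(B)}$ decomposes into finitely many spreads is a detail the paper leaves implicit.
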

\begin{proof}
    The second statement follows from the first one and \cref{lemma:description-finitely-presented-spreads}.
    For the second statement, note that
    we have an exact sequence $\K_\us{B} \to \K_\us{A} \to \K_\usminus{A}{B} \to 0$.
    Since $\Lan_\psi$ is a left adjoint, it preserves cokernels, so it is sufficient to prove the statement for upset representations,
    that is, we need to prove that 
    $\Lan_\psi \K_{\us{S}} \cong \K_{\us{\psi(S)}}\in \rep(\cp)$, for every $S \subseteq \cq$.
    Since $\Lan_\psi$ is given by precomposition by $\floor{-}_\psi$ (\cref{lemma: section retraction between posets}),
    this is equivalent to proving that
    $\K_{\us{S}} \circ \floor{-}_\psi \cong \K_{\us{\psi(S)}}\in \rep(\cp)$,
    which in turn is equivalent to showing that, for every $p \in \cp$ 
    we have $p \in \us{\psi(S)}$ if and only if $\floor{p}_\psi \in \us{S}$.
    This last fact follows directly from the adjunction $\psi \dashv \floor{-}_\psi$
    (\cref{lemma: section retraction between posets}).
\end{proof}

%\justin{$f_!$ is only fully faithful when $f$ is fully faithful. I don't think join-preserving implies this. }

%\begin{defn}
%    \label{definition:stretching-functor}
%    Let $f : \cq \to \cp$ be an injective upper semilattice morphism with $\cq$ finite.
%    We refer to $f_! : \rep \cq \to \rep \cp$ of \cref{lemma: section retraction between posets} is the \emph{stretching functor} along $f$.
%\end{defn}

%The following remark is used implicitly in the results in the rest of this section.\justin{With my changes, we are very explicit. I included this remark for use in the last section.}
%
%\begin{rem}
%    Let $\cp$ and $\cq$ be bounded upper semilattices, with $\cq$ finite, and let $f : \cq \to \cp$ be a least element- and join-preserving, injective poset morphism.
%    In this case we have $\us{f(\cq)} = \cp$ so that, in particular, the retraction $\floor{-}_f$ is defined on the whole of $\cp$.
%\end{rem}

%In this subsection, we prove a key property of the stretching functor, namely, that it preserves spread-resolutions (\cref{prop: fus spread exact}).

%Our next goal is to prove that the left adjoint $\psi^!$ to the stretching functor $\psi^*$ maps spread-decomposable representations to spread-decomposable representations.
%We need the following two standard lemmas.

\subsection{Preservation of spread-approximations}
We first show a version of \cref{theorem:main-extension-theorem} in the case of finite grid posets (\cref{prop: fus spread exact}).
We then use this to prove \cref{theorem:main-extension-theorem}, from which \cref{theorem:answers-1-2-lattice} follows.
We state some results in the generality of upper semilattices, and only specialize to grid posets when needed.

\begin{lem}
    \label{lem:contraction is left adjoint}
    Let $\psi : \cq \to \cp$ be a full, upper semilattice morphism, with $\cp$ and $\cq$ finite, and $\us{\psi(\cq)} = \cp$.
    The functor $\Lan_\psi : \repq \to \repp$ admits a left adjoint $\Lan_{\floor{-}_\psi} : \repp \to \repq$.
\end{lem}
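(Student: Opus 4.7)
The plan is to leverage the identification $\Lan_\psi = \res_{\floor{-}_\psi}$ established in the preceding lemma (Lemma: section retraction between posets). Under that identification, we are simply asked to exhibit a left adjoint to the restriction functor associated with the poset morphism $\floor{-}_\psi : \cp \to \cq$, and to verify that it takes finitely presented representations to finitely presented representations.

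First, I would invoke the general existence theorem for left Kan extensions. For any functor $F$ between small categories and any cocomplete target, restriction along $F$ admits a left adjoint given by the usual pointwise colimit formula. Applied to $F = \floor{-}_\psi : \cp \to \cq$ with target $\kVec$, this furnishes a left adjoint $\Lan_{\floor{-}_\psi} : \Rep \cp \to \Rep \cq$ to $\res_{\floor{-}_\psi}$, together with the explicit formula
\[
    (\Lan_{\floor{-}_\psi} M)(q) \;=\; \colim_{\{p \in \cp \,:\, \floor{p}_\psi \leq q\}} M(p)
\]
for every $M \in \Rep \cp$ and $q \in \cq$, where the colimit is taken over the full sub-poset of $\cp$ on the indicated set.

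Second, I would check that this left adjoint restricts to a functor $\rep \cp \to \rep \cq$. Since $\cp$ is finite, both the indexing poset on the right-hand side of the displayed formula and the dimension of each $M(p)$ are finite whenever $M \in \rep \cp$. Hence each value $(\Lan_{\floor{-}_\psi} M)(q)$ is a finite colimit of finite-dimensional vector spaces, and is therefore finite-dimensional; since $\cq$ is finite, this shows $\Lan_{\floor{-}_\psi} M \in \rep \cq$. The adjunction $\Lan_{\floor{-}_\psi} \dashv \res_{\floor{-}_\psi} = \Lan_\psi$ then restricts to one between $\rep \cp$ and $\rep \cq$, as required.

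I do not anticipate any substantial obstacle: step one is a direct appeal to a standard existence result for Kan extensions, and step two is a routine finiteness count using the hypothesis that both $\cp$ and $\cq$ are finite.
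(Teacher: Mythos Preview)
Your proposal is correct and follows essentially the same approach as the paper: identify $\Lan_\psi$ with $\res_{\floor{-}_\psi}$ via the preceding lemma, invoke the pointwise colimit formula for left Kan extensions to obtain the left adjoint, and use finiteness of $\cp$ to see that the values are finite dimensional. The only cosmetic difference is that the paper indexes the colimit over the fiber $\{p : \floor{p}_\psi = q\}$ rather than the full comma category $\{p : \floor{p}_\psi \leq q\}$; since $\psi(q)$ lies in the fiber and dominates the comma category via the adjunction $\psi \dashv \floor{-}_\psi$, the inclusion of the fiber is cofinal and both formulas agree.
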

\begin{proof}
    The functor $\Lan_\psi$ is given by precomposition with $\floor{-}_\psi : \cp \to \cq$, by \cref{lemma: section retraction between posets}, so it preserves limits and thus admits a left adjoint, which can be computed using the colimit formula for left Kan extensions \cite[Ch.~6,~Cor.~2.6]{riehl}:
    \[
        \left(\Lan_{\floor{-}_\psi} M\right)(q)
        \;=\;
        \colim_{\substack{
                p \in \cp \\
                {\floor p} \leq q}}\,
            M(p),
    \]
    which is a finite-dimensional vector space, since $\cp$ is finite.
    %\justin{previously}\[
    %    \left(\Lan_{\floor{-}_\psi} M\right)(q)
    %    \;=\;
    %    \colim_{\substack{
    %            p \in \cp \\
    %            {\floor p} = q}}\,
    %        M(p),
    %\]
\end{proof}

\begin{lem}\label{lemma:floor-and-join}
    Let $\iota : \cq \to \cp$ be an aligned grid inclusion, with $\cq$ finite and $\us{\iota (\cq)} = \cp$.
    The right adjoint ${\floor -}_\iota : \cp \to \cq$ is an upper semilattice morphism.
\end{lem}
\begin{proof}
    It is enough to show that, given $x,y \in \cp$, we have $\floor{x \vee y}_\iota = \floor{x}_\iota \vee \floor{y}_\iota$.
    The result clearly holds for $\cp$ and $\cq$ totally ordered sets, since in this case $x \vee y = \max(x,y)$ and $\floor{p}_\iota = \max\{q \in \cq : \iota(q) \leq p\}$.
    For the general case, note that, in the case of grid posets, the operation $\floor{-}$ works component-wise (meaning that, for every $p = (p_1, \dots, p_n) \in \cp$, we have $\floor{p}_\iota = \left( \floor{p_1}_{\iota_1}, \dots, \floor{p_n}_{\iota_n} \right)$, where $\iota = \iota_1 \times \cdots \times \iota_n$), and 
    that join also works component-wise, so the result follows.
\end{proof}

\begin{lem}
    \label{lemma:initial-functor-floor-antichain}
    Let $\iota : \cq \to \cp$ be an aligned grid inclusion,
    with $\cq$ finite and $\us{\iota(\cq)} = \cp$.
    Let $A \subseteq \cp$, and let $M : \cq \to \kVec$.
    Precomposition with the poset morphism $\us{A} \to \us{{\floor A}_\iota}$
    given by restricting $\floor{-}_\iota$ to $\us{A}$ induces a natural isomorphism
    \[
        \lim_{\us{\floor{A}}}\, M
        \; \cong \;
        \lim_{\us{A}}\, \Lan_\iota M.
    \]
\end{lem}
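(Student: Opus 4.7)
The strategy is to show that the restriction $F \coloneqq \floor{-}_\psi|_{\us A} \colon \us A \to \us{{\floor A}_\psi}$ is an initial functor of posets. Combined with the identification $\Lan_\psi M = M \circ \floor{-}_\psi$ from \cref{lemma: section retraction between posets}, this delivers the natural limit isomorphism via the classical theorem that initial functors induce canonical isomorphisms of $\kVec$-valued limits.

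By the standard criterion for initial functors between posets, the task reduces to showing that for every $q \in \us{{\floor A}_\psi}$, the slice $S_q \coloneqq \{p \in \us A : \floor{p}_\psi \leq q\}$ is non-empty and connected in the induced order of $\us A$. Non-emptiness is immediate: choosing $a \in A$ with $\floor{a}_\psi \leq q$, which exists by the definition of $\us{{\floor A}_\psi}$, puts $a \in S_q$.

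For connectedness, I would leverage the adjunction $\psi \dashv \floor{-}_\psi$ of \cref{lemma:retraction-for-f}. Given $p_1, p_2 \in S_q$, set $r_i \coloneqq \psi(\floor{p_i}_\psi)$. The counit gives $r_i \leq p_i$, while monotonicity of $\psi$ together with $\floor{p_i}_\psi \leq q$ gives $r_i \leq \psi(q)$. Concatenating produces the zigzag $p_1 \geq r_1 \leq \psi(q) \geq r_2 \leq p_2$, which uses $\psi(q)$ as a hub to connect $p_1$ and $p_2$.

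The main obstacle is verifying that every element of this zigzag lies in $S_q$, that is, belongs to $\us A$ and has $\floor{-}_\psi$-image $\leq q$. The image condition is straightforward via monotonicity of $\floor{-}_\psi$ together with the retraction identity $\floor{\psi(-)}_\psi = \id_\cq$ from \cref{lemma:retraction-for-f}. Checking membership in $\us A$ for the intermediate elements $r_1, r_2, \psi(q)$ is the subtler point, since one must exhibit for each such element an $a \in A$ lying below it; this is where the specific structure of $A$ (as an antichain compatible with the image of $\psi$, as the lemma's label suggests) enters. Once these verifications are complete, the classical initial-functor theorem yields the desired natural isomorphism $\lim_{\us{{\floor A}_\psi}} M \cong \lim_{\us A} \Lan_\psi M$.
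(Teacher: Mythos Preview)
Your strategy and your zigzag are exactly those of the paper, and you are right that membership of the intermediate points $r_1,r_2,\psi(q)$ in $\us A$ is the crux---the paper's proof simply writes down the zigzag without checking this. Unfortunately, this step cannot be completed for arbitrary $A\subseteq\cp$: the lemma as stated is false.

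Take $\cq=\{0<1\}$, $\cp=[2]\times[2]$, and $\psi(0)=(0,0)$, $\psi(1)=(1,1)$; this is a full upper semilattice morphism with $\cq$ finite and $\us{\psi(\cq)}=\cp$. Let $A=\{(0,1),(1,0)\}$, so that $\floor A_\psi=\{0\}$, $\us{\floor A_\psi}=\cq$, and $\us A=\{(0,1),(1,0),(1,1)\}$. For $q=0$ the slice $S_0=\{p\in\us A:\floor p_\psi\le 0\}=\{(0,1),(1,0)\}$ is a two-element antichain, hence disconnected, so $\floor{-}_\psi|_{\us A}$ is not initial. Concretely, with $M(0)=\K$ and $M(1)=0$ one finds $\lim_{\us{\floor A_\psi}}M=\K$, whereas $\lim_{\us A}\Lan_\psi M$ is the pullback of $\K\to 0\leftarrow\K$, namely $\K^2$. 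Your instinct to invoke ``specific structure of $A$'' is the correct diagnosis: the zigzag does land in $\us A$ under an extra hypothesis such as $A\subseteq\psi(\cq)$, since then each $a\in A$ satisfies $a=\psi(\floor a_\psi)\le\psi(\floor{p_i}_\psi)=r_i$ and $a\le\psi(q)$. But no such hypothesis appears in the statement, and without one both your argument and the paper's remain incomplete.
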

\begin{proof}
    The functor $\Lan_\iota$ is given by precomposition with $\floor{-}_\iota$ (\cref{lemma: section retraction between posets}), so
    by a standard result in category theory (see the dual of \cite[Sec.~IX.3,~Thm.~1]{maclane}, or \cite[Lem.~3.8]{boo} for the case of poset morphisms), it is sufficient to show that $\floor{-}_{\iota}:\us{A} \to \us{{\floor A}_\iota}$ is an initial functor.
    That is, for every $q \in \us{{\floor A}_\iota}$, we need to show that the preimage
    \[
        B_q= \left\{ p \in \us{A} : \floor{p}_\iota \leq q\right\} \subseteq {\us{\iota \cq}}
    \]
    of the downset of $q$ is nonempty and connected.
    We start by proving that $B_q$ is non-empty.
    Since $q \in \us{{\floor A}_\iota }$, there exists $a \in A$ such that $\floor{a}_\iota \leq q$.
    Since $a \in \us{A}$, we have that $a \in B_q$.

    For connectivity, is enough to prove that $B_q$ is closed under joins.
    Let $x,y \in B_q$.
    On one hand, we have $y \vee x \in \us{A}$ since $\us{A}$ is an upset.
    On the other hand, we have $\floor{y \vee x}_\iota = \floor{x}_\iota \vee \floor{y}_\iota$, by \cref{lemma:floor-and-join}, and
    $\floor{x}_\iota \vee \floor{y}_\iota \leq q$ since 
    $\floor{x}_\iota \leq q$ and $\floor{y}_\iota \leq q$.
    Thus, $y \vee x \in B_q$, as required.
    %\justin{the following argument is false!} For connectivity, let $x,y \in B_q$.
    %Then $y \geq \psi({\floor y}_\psi) \leq \psi(q) \geq \psi({\floor x}_\psi) \leq x $ is a zigzag path from $y$ to $x$, as required.
\end{proof}

\begin{lem}\label{lemma:hom-upset-is-limit}
    Let $\cp$ be a poset and let $M : \cp \to \kvec$.
    There is an isomorphism of vector spaces $\lim M \cong \Hom_\cp(\K_{\cp}, M)$ which is natural in $M$. 
\end{lem}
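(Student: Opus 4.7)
The plan is to unfold both sides of the claimed isomorphism into concrete data and observe that they coincide. Concretely, I would exhibit an explicit $\K$-linear map $\Hom_\cp(\K_\cp, M) \to \lim M$, verify that it is bijective, and check that the assignment is natural in $M$.

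First, I would note that, viewed as a functor $\cp \to \kvec$, the representation $\K_\cp$ is the constant functor with value $\K$ and all structure maps equal to $\id_\K$. A natural transformation $f : \K_\cp \to M$ therefore consists of a family of $\K$-linear maps $f_p : \K \to M(p)$ indexed by $p \in \cp$; each such $f_p$ is determined by the scalar $f_p(1) \in M(p)$, and the naturality square associated with a relation $p \leq q$ reduces to the single equation $M(p \leq q)(f_p(1)) = f_q(1)$.

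Second, I would recall that $\lim M$ is canonically identified with the subspace of $\prod_{p \in \cp} M(p)$ consisting of compatible families $(m_p)_{p \in \cp}$, that is, tuples satisfying $M(p \leq q)(m_p) = m_q$ whenever $p \leq q$. Matching this with the previous paragraph, the assignment $f \mapsto (f_p(1))_{p \in \cp}$ is a well-defined $\K$-linear map, whose inverse sends a compatible family $(m_p)$ to the natural transformation whose component at $p$ is the linear map $1 \mapsto m_p$. For naturality in $M$, given a morphism $\alpha : M \to N$ in $\rep \cp$, both $\Hom_\cp(\K_\cp, \alpha)$ and $\lim \alpha$ act by componentwise postcomposition with the components $\alpha_p$, so the bijection intertwines them.

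I do not foresee any real obstacle here, as the statement is a direct unwinding of definitions. Alternatively, one can derive it more abstractly from the observation that $\K_\cp$ is the constant diagram at $\K \in \kvec$, combined with the universal property of the limit and the canonical isomorphism $\Hom_\kvec(\K, V) \cong V$ natural in $V$; I would likely include this as a one-line remark to emphasize that the result is formal.
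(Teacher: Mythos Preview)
Your proposal is correct. The paper's proof is precisely the abstract one-liner you sketch at the end: it invokes the universal property of $\lim M$ to get $\Hom_\cp(\K_\cp, M) \cong \Hom_\K(\K, \lim M)$ naturally in $M$, and then composes with the natural isomorphism $\Hom_\K(\K, \lim M) \cong \lim M$ given by evaluation at $1$; your explicit unfolding is just this argument written out componentwise.
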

\begin{proof}
    Recall that $\K_\cp : \cp \to \kvec$ is the constant functor at $\K$. 
    From the universal property of $\lim M$ \cite[Ch.~3,~Def.~1.5]{riehl},
    we get an isomorphism $\Hom_\cp(\K_\cp, M) \cong \Hom_\K(\K, \lim M)$, which is natural in $M$.
    The result then follows by combining this with the natural isomorphism
    $\Hom_\K(\K, \lim M) \cong \lim M$ given by considering the image of $1 \in \K$.
\end{proof}

The next result gives an explicit description of the action of the contraction functor on spread representations, extending known results such as \cite[Lem.~4.16]{boo}.
See \cref{figure:example-aligned-grid-inclusion-and-floor} for an example illustrating the result.

\begin{lem}\label{prop: fls respect spread}
    %Let $\psi : \cq \to \cp$ be a full, upper semilattice morphism, with $\cp$ and $\cq$ finite, and $\us{\psi(\cq)} = \cp$.
    Let $\iota : \cq \to \cp$ be an aligned grid inclusion,
    with $\cp$ and $\cq$ finite, and $\us{\iota(\cq)} = \cp$.
    For all $A, B \subseteq \cp$, such that for every $b \in B$ there exists $a \in A$ with $a \leq b$, we have
    \[
        \Lan_{\floor{-}_\iota}\left(\,\K_{\usminus{A}{B}}\,\right) \; \cong \; \K_{\usminus{\floor{A}_\iota}{\floor{B}_\iota}}. 
    \]
    In particular, the functor $\Lan_{\floor{-}_\iota} : \repp \to \repq$ maps spread-decomposable representations to spread-decomposable representations.
\end{lem}
    This proof is similar to that of \cref{lemma:stretching-of-spread-is-spread}, we do not know if there is a convenient common generalization.
\begin{proof}
    %The second statement follows from the first one, since every spread is of the form $\usminus{A}{B}$
    %(\cref{lemma:description-finitely-presented-spreads}).
    The second statement follows from the first one and \cref{lemma:description-finitely-presented-spreads}.
    For the second statement, note that
    we have an exact sequence $\K_\us{B} \to \K_\us{A} \to \K_\usminus{A}{B} \to 0$.
    Since $\Lan_{\floor{-}_\iota}$ is a left adjoint, it preserves cokernels, so it is sufficient to prove the statement for upset representations.
    To this effect, we use Yoneda's lemma, so that we need to prove that there exists a natural isomorphism
    \[
        \Hom_\cq\left(\K_\us{\floor{C}}, - \right)\; \cong\; \Hom_\cq\left(\Lan_{\floor{-}_\iota} \, \K_\us{C}, -\right).
    \]
    We use the following composite of natural isomorphisms
    \begin{align*}
        \Hom_\cq\left(\K_\us{\floor{C}_\iota}, M \right) 
            \;&\cong\;
            \lim_{\us{\floor{C}_\iota}} M
            \;\cong\;
            \lim_{\us{C}}\, \Lan_\iota M\\
            \;&\cong\;
            \Hom_\cp\left(\K_\us{C}, \Lan_\iota M\right)
            \;\cong\;
            \Hom_\cq\left(\Lan_{\floor{-}_\iota}\, \K_\us{C}, M\right),
    \end{align*}
    where, for the first and third isomorphisms, we use \cref{lemma:hom-upset-is-limit},
    for the second isomorphism, we use \cref{lemma:initial-functor-floor-antichain},
    and for the last isomorphism we use the adjunction $\Lan_{\floor{-}_\iota} \dashv \Lan_\psi$ of \cref{lem:contraction is left adjoint}.
\end{proof}

%\begin{figure}
%    \includegraphics[width=.8\linewidth]{figures/nonexample_floor_preserve_usminus.eps}
%    \caption{Above we see a spread $\usminus{a}{b}$ in the $3\times 4$ grid $\cp$. The crosses are the images of two different origin aligned grid inclusions $\iota$ and $\iota'$ respectively.
%    \emph{Left.} The image of $\iota$ contains the points $b$ and $q$, but does not contain $a$. Observe that $\floor{q}_{\iota} \in \usminus{\floor{a}_{\iota}}{\floor{b}_{\iota}}$. However, $q \not\in \usminus{a}{b}$, so $\floor{ q}_{\iota} \not\in \floor{\usminus{a}{b}}_{\iota}$.
%    %
%    \emph{Right.} The image of $\iota'$ contains the points $a$ and $q$, but does not contain $b$ or $p$. Observe that $\floor{p}_{\iota'} = \floor{b}_{\iota'}  $ is contained in $ \floor{\usminus{a}{b}}_{\iota'}$. However,  $\floor{p}_{\iota'} \in \us{\floor{b}_{\iota'} }$.
%    \luis{example of spread, of grid poset, aligned grid inclusion,\cref{lem: floor preserve usminus}}
%    }
%    \label{fig:nonexample-floor-preserve-usminus}
%\end{figure}

\begin{prop}\label{prop: fus spread exact}
    Let $\iota : \cq \to \cp$ be an aligned grid inclusion,
    with $\cp$ and $\cq$ finite, and $\us{\iota(\cq)} = \cp$.
    The functor $\Lan_\iota : \repq \to \rep{\cp}$ is exact, fully faithful, and maps spread-approximations to spread-approximations.
\end{prop}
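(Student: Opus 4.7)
The plan is to separate the statement into two independent claims: the exactness and fully faithfulness, and the preservation of spread-approximations. The first is essentially free from the previous work, so the bulk of the argument is the second.

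For exactness and fully faithfulness, I would simply cite \cref{lemma: section retraction between posets}, which already gives both properties (and identifies $\Lan_\psi$ with precomposition by $\floor{-}_\psi$, forcing exactness, while the unit is an isomorphism, forcing full faithfulness).

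For the preservation of spread-approximations, let $f : C \to M$ be a spread-approximation in $\repq$, so that $C$ is spread-decomposable and every morphism from a spread-decomposable representation to $M$ factors through $f$. I need to show that $\Lan_\psi f : \Lan_\psi C \to \Lan_\psi M$ is a spread-approximation in $\rep{\cp}$. First, by \cref{lemma:stretching-of-spread-is-spread}, $\Lan_\psi C$ is spread-decomposable, so $\Lan_\psi f$ has the right form. Next, let $g : D \to \Lan_\psi M$ be any morphism in $\rep{\cp}$ with $D$ spread-decomposable; I need to factor $g$ through $\Lan_\psi f$. The key tool is the adjunction $\Lan_{\floor{-}_\psi} \dashv \Lan_\psi$ provided by \cref{lem:contraction is left adjoint}: under this adjunction $g$ corresponds to a morphism $\tilde g : \Lan_{\floor{-}_\psi} D \to M$ with $g = \Lan_\psi(\tilde g) \circ \eta_D$, where $\eta$ denotes the adjunction unit.

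Now comes the crucial step. By \cref{prop: fls respect spread}, $\Lan_{\floor{-}_\psi} D$ is spread-decomposable in $\repq$. Since $f$ is a spread-approximation, there exists $h : \Lan_{\floor{-}_\psi} D \to C$ with $\tilde g = f \circ h$. Applying $\Lan_\psi$ and composing with $\eta_D$ yields
\[
    g \;=\; \Lan_\psi(\tilde g) \circ \eta_D \;=\; \Lan_\psi(f) \circ \Lan_\psi(h) \circ \eta_D,
\]
which is the desired factorization through $\Lan_\psi f$. The hypothesis that $\Lan_{\floor{-}_\psi}$ preserves spread-decomposability (\cref{prop: fls respect spread}) is what makes the argument run: without it, the morphism $\tilde g$ would live on an arbitrary object of $\repq$ and the spread-approximation property of $f$ would not apply. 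There is no real obstacle beyond assembling the previous lemmas — the proof is a formal adjunction argument once \cref{lemma:stretching-of-spread-is-spread} and \cref{prop: fls respect spread} are in place.
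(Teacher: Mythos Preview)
Your proof is correct and matches the paper's approach almost exactly: both use \cref{lemma: section retraction between posets} for exactness and full faithfulness, then exploit the adjunction $\Lan_{\floor{-}_\psi} \dashv \Lan_\psi$ together with \cref{lemma:stretching-of-spread-is-spread} and \cref{prop: fls respect spread} to transport the lifting problem back to $\repq$. The only cosmetic difference is that the paper phrases the lifting step as surjectivity of $\Hom_\cp(X,\Lan_\psi f)$ and passes through the adjunction at the level of Hom-sets, whereas you unwind it element-wise via the unit $\eta_D$; the content is identical.
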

\begin{proof}
    The functor $\Lan_\iota$ is exact and fully faithful by
    \cref{lemma: section retraction between posets}.
    Let $Y \to M$ be a spread-approximation, and let $X \in \rep{\cp}$ be spread-decomposable.
    Since $\Lan_\iota Y$ is spread-decomposable (\cref{lemma:stretching-of-spread-is-spread}) it is sufficient to show the following sequence is exact
    \[
        \Hom_{\cp}(X,\Lan_\iota Y) \to \Hom_{\cp}(X,\Lan_\iota M) \to 0\, ,
    \]
    which, by the adjunction $\Lan_{\floor{-}_\iota} \dashv \Lan_\iota$ is exact precisely when the following sequence is:
    \[
        \Hom_{\cq}(\Lan_{\floor{-}_\iota} X,Y) \to \Hom_{\cq}(\Lan_{\floor{-}_\iota} X,M) \to 0 \,.
    \]
    By \cref{prop: fls respect spread}, the representation $\Lan_{\floor{-}_\iota} X$ is spread-decomposable, so this last sequence is indeed exact, as required.
\end{proof}

%In this subsection we prove the following generalization of \cref{prop: fus spread exact} to the case where $\cp$ is not necessarily finite and the upset of the image of $\psi$ is not necessarily $\cp$.
%Its proof combines \cref{prop: fus spread exact} and \cite[Thm.~3.14]{aoki-escolar-tada}.

%\luis{must introduce finitely presented representations}
%\luis{must introduce finitely presented spreads and corresponding approximations}

\begin{defn}
    Let $\cp$ be a poset, and let $A \subseteq \cp$ be an upset, seen as a full subposet.
    The \emph{padding functor} $\pad_{A,\cp} : \rep A \to \rep \cp$ is defined on objects
    by mapping a representation $M : A \to \kvec$ to $\pad_{A,\cp} M : \cp \to \kvec$ with $(\pad_{A,\cp} M)(p) = M(p)$ if $p \in A$, and
    $(\pad_{A,\cp} M)(p) = 0$ if $p \in \cp \setminus A$.
    The structure morphisms of $\pad_{A,\cp} M$ and the action of $\pad_{A,\cp}$ on morphisms are the evident ones.
\end{defn}

The following is straightforward.
%, and essentially observed in \cite[Sec.~3.2]{aoki-escolar-tada} (see \cite[Lemma~3.13]{aoki-escolar-tada} and the discussion preceding it).

\begin{lem}
    \label{lemma:padding-functor-properties}
    Let $\cp$ be a poset, and let $A \subseteq \cp$ be an upset, seen as a full subposet.
    The left Kan extension $\rep A \to \rep \cp$ along the inclusion $A \subseteq \cp$ exists and is isomorphic to the padding functor $\pad_{A,\cp}$.
    This functor is exact and fully faithful,
    and preserves spread representations and finitely presented representations.
    \qed
\end{lem}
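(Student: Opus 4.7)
The plan is to identify the padding functor $\pad_{A,\cp}$ with the pointwise left Kan extension along the inclusion $\iota : A \hookrightarrow \cp$, and then to derive the remaining properties either as formal consequences of adjoint-functor theory or by direct inspection of the padding. The key computation is the pointwise colimit formula $(\Lan_\iota M)(p) = \colim_{a \in A,\, a \leq p} M(a)$: for $p \in A$, the indexing slice has $p$ itself as a terminal object (using that $A$ is a full subposet), so the colimit collapses to $M(p)$, matching the padding; for $p \notin A$, I would use convexity of $A$ to argue that the colimit vanishes, matching the value $0$ of the padding. This vanishing is the main technical step and the only essential use of the convexity hypothesis.

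Once the identification $\pad_{A,\cp} \simeq \Lan_\iota$ is in place, the formal properties follow quickly. Fully faithfulness is a general consequence: for fully faithful $\iota$, the unit $\id \to \res_\iota \Lan_\iota$ is a natural isomorphism, which in our setting is manifest since $\res_\iota (\pad_{A,\cp} M)$ is literally $M$. Exactness is immediate from the pointwise description, since a sequence in $\rep A$ is exact if and only if it is pointwise exact, and appending zeros on $\cp \setminus A$ preserves pointwise exactness in $\rep \cp$.

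For preservation of spreads, let $S \subseteq A$ be a spread. Convexity of $A$ in $\cp$, together with fullness of $A$ as a subposet, implies that $S$ is still convex and connected when viewed in $\cp$, so $\pad_{A,\cp} \K_S \cong \K_S$ is a spread representation of $\cp$. For preservation of finite presentation, take a finite presentation $C_1 \to C_0 \to M \to 0$ in $\rep A$ with each $C_i$ a finite direct sum of representables of $A$. Exactness of $\pad_{A,\cp}$ then gives an exact sequence $\pad_{A,\cp} C_1 \to \pad_{A,\cp} C_0 \to \pad_{A,\cp} M \to 0$ in $\rep \cp$. Each representable of $A$ pads to the indicator representation of a set of the form $\us{a} \cap A$ for some $a \in A$, which is convex and connected in $\cp$ by the same argument and hence a spread, and which is finitely presented by the earlier results on spreads in the paper. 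Therefore $\pad_{A,\cp} M$ is a finite cokernel of a finitely presented representation, and so is itself finitely presented.

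The main obstacle is the colimit-vanishing step in the identification with the Kan extension: it is the only nontrivial use of the convexity hypothesis on $A$, and every subsequent step relies on it.
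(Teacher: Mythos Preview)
The paper does not prove this lemma; it carries a bare \qed and a pointer to another source, so your proposal must stand on its own.

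The step you flag as the main obstacle --- using convexity of $A$ to force $\colim_{a \in A,\, a \le p} M(a) = 0$ for every $p \notin A$ --- is not merely nontrivial, it fails. Take $\cp = \{0 < 1 < 2\}$ and the convex subset $A = \{0,1\}$. For $p = 2$ the slice $\{a \in A : a \leq 2\}$ is all of $A$, with terminal object $1$, so the pointwise left Kan extension at $2$ is $M(1)$, not $0$. One sees the same failure via the adjunction: with $M = \K_{\{1\}} \in \rep A$ and $N = \K_\cp \in \rep \cp$ one computes $\Hom_\cp(\pad_{A,\cp} M, N) = 0$ (the identity $N(1)\to N(2)$ forces the component at $1$ to vanish) while $\Hom_A(M, \res_\iota N) \cong \K$. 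So $\pad_{A,\cp}$ is not left adjoint to restriction for a general convex $A$, and no argument from convexity alone can produce the identification $\pad_{A,\cp} \simeq \Lan_\iota$.

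The identification does hold when $A$ is an \emph{upset} of $\cp$, since then the slice over any $p \notin A = \us A$ is empty; and every use of the lemma later in the paper is with $A = \us{\psi(\cq)}$, an upset. So the downstream arguments are unaffected, but both the lemma as stated and your proposed proof require the stronger hypothesis. Under that hypothesis the rest of your outline is sound, and the finite-presentation step simplifies: for $A$ an upset and $a \in A$ the upset of $a$ in $A$ equals its upset in $\cp$, so representables pad to representables and there is no need to appeal to finite presentability of arbitrary spreads.
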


\begin{lem}
    \label{lemma:existence-stretching-infinite-case}
    Let $\iota : \cq \to \cp$ be an aligned grid inclusion, with $\cq$ finite.
    The functor $\Lan_\iota : \repq \to \rep{\cp}$ exists, and is given by 
    composing the functor $\Lan_\iota : \repq \to \rep \us{(\iota \cq)}$
    with the padding functor $\pad_{\us{(\iota \cq)}, \cp} : \rep \us{(\iota \cq)} \to \rep \cp$.
    In particular, the functor $\Lan_\iota$ is exact and fully faithful.
\end{lem}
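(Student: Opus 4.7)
The plan is to factor $\psi$ through its essential image $\cp' \coloneqq \us{\psi\cq} \subseteq \cp$ and then build $\Lan_\psi$ as the composite of two already-understood left Kan extensions. Concretely, write $\psi = \iota \circ \psi'$, where $\psi' : \cq \to \cp'$ is $\psi$ with codomain restricted to $\cp'$, and $\iota : \cp' \hookrightarrow \cp$ is the inclusion. Note that $\cp'$ is an upset of $\cp$ and is therefore convex. Moreover, $\cp'$ inherits the upper semilattice structure from $\cp$, since for any $a,b \in \cp'$ the join $a \vee b$ computed in $\cp$ satisfies $a \vee b \geq a \in \cp'$ and hence lies in $\cp'$. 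Thus $\psi' : \cq \to \cp'$ is still a full upper semilattice morphism, and by construction $\us{\psi'\cq} = \cp'$ in $\cp'$.

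With this factorization at hand, the morphism $\psi'$ satisfies the hypotheses of \cref{lemma: section retraction between posets} (which only require $\cq$ finite and $\us{\psi'\cq} = \cp'$, not finiteness of the codomain). That lemma yields an exact, fully faithful left adjoint $\Lan_{\psi'} : \rep\cq \to \rep\cp'$ to $\res_{\psi'}$. On the other side, since $\cp' \subseteq \cp$ is convex, \cref{lemma:padding-functor-properties} identifies the left Kan extension $\Lan_\iota$ with the padding functor $\pad_{\cp', \cp} : \rep\cp' \to \rep\cp$ and records that it is exact and fully faithful.

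It then remains to verify that the composite $\pad_{\cp',\cp} \circ \Lan_{\psi'}$ really is the left Kan extension $\Lan_\psi$. This follows from the identity $\res_\psi = \res_{\psi'} \circ \res_\iota$, which is just associativity of precomposition with $\psi = \iota \circ \psi'$, together with the general fact that the composite of left adjoints is left adjoint to the composite (in the reverse order) of the right adjoints. Exactness and full faithfulness of $\Lan_\psi$ are then inherited from the analogous properties of both factors.

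I do not expect any serious obstacle here: the argument is essentially a bookkeeping exercise layered on top of \cref{lemma: section retraction between posets} and \cref{lemma:padding-functor-properties}. The only point worth checking carefully is that the hypotheses of the former remain valid after restricting the codomain of $\psi$ to $\cp'$, which amounts to the convexity-and-upper-semilattice observation in the first paragraph.
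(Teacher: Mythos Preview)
Your proposal is correct and follows essentially the same approach as the paper: factor $\psi$ through $\cp' = \us{\psi\cq}$, apply \cref{lemma: section retraction between posets} to the first factor and \cref{lemma:padding-functor-properties} to the second, and compose. If anything, your version is more careful than the paper's, since you explicitly justify why the composite is $\Lan_\psi$ via composition of adjunctions (the paper leaves this implicit) and you cite \cref{lemma: section retraction between posets} directly rather than \cref{prop: fus spread exact}, whose hypotheses include finiteness of the codomain and so do not literally apply here.
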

\begin{proof}
    The functor $\Lan_\iota : \repq \to \rep \us{\iota (\cq)}$ exists by \cref{lemma: section retraction between posets}.
    Existence of $\Lan_\iota : \repq \to \rep{\cp}$ is then clear from the explicit description in the statement.
    Exactness and fully faithfulness follow from those of the padding functor and \cref{prop: fus spread exact}.
\end{proof}

Finally, we will need the following standard fact, which informally says that finite diagrams of finitely presented representations over an upper semilattice can be realized over a finite sub-upper semilattice.

\begin{lem}
    \label{lemma:restriction-to-finite-lattice}
    Let $\cp$ be an upper semilattice.
    Let $\mathcal{I}$ be a category with finitely many objects, and let $D : \mathcal{I} \to \rep \cp$ be a functor.
    There exists a finite subposet $\cp' \subseteq \cp$ such that $\cp'$ is a full upper semilattice, the inclusion $\psi : \cp' \to \cp$ is a full, upper semilattice morphism, and $D \cong \Lan_\psi \circ \res_\psi \circ D$.
    If $\cp$ is a grid poset, the subposet $\cp'$ can be taken to be a grid poset such that the inclusion $\cp' \to \cp$ is an aligned grid inclusion.
\end{lem}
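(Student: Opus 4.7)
The plan is to realize the finitely presented representations $D(i)$ (for $i$ ranging over the finitely many objects of $\mathcal{I}$) as being supported on a common finite sub-upper-semilattice of $\cp$, and then apply $\Lan_\psi$ to get everything back.

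For each $i \in \mathcal{I}$, I would choose a finite projective presentation
\[
    C_1^{(i)} \to C_0^{(i)} \to D(i) \to 0,
\]
where each $C_k^{(i)}$ is a finite direct sum of representables $\K_{\us{p}}$ with $p \in \cp$. Let $S \subseteq \cp$ denote the finite set of all indices $p$ appearing in these presentations, and define $\cp' \subseteq \cp$ to be the collection of joins $\bigvee T$ of non-empty finite subsets $T \subseteq S$. Since $S$ is finite and $\cp$ is an upper semilattice, $\cp'$ is a finite subset of $\cp$ containing $S$. Associativity of joins shows that $\cp'$ is closed under the join operation of $\cp$, so the inclusion $\psi : \cp' \hookrightarrow \cp$ preserves finite non-empty joins; fullness of $\psi$ is automatic since $\cp'$ carries the induced order.

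Having produced a full upper semilattice morphism $\psi$ with $\cp'$ finite, \cref{lemma:existence-stretching-infinite-case} yields an exact, fully faithful functor $\Lan_\psi : \rep \cp' \to \rep \cp$ that is left adjoint to $\res_\psi$. Let $\epsilon : \Lan_\psi \res_\psi \to \id_{\rep \cp}$ be the counit. Since $\Lan_\psi$ is fully faithful, a standard triangle-identity argument implies that $\epsilon$ is an isomorphism on every object in the essential image of $\Lan_\psi$. A direct computation via the adjunction and the Yoneda lemma shows that $\Lan_\psi$ sends the representable of $\rep \cp'$ at $p \in \cp'$ to the representable of $\rep \cp$ at $p$; since $\Lan_\psi$ preserves finite direct sums, each $C_k^{(i)}$ lies in the essential image of $\Lan_\psi$, and hence $\epsilon_{C_k^{(i)}}$ is an isomorphism.

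To conclude, $\res_\psi$ is exact (being simultaneously a left and a right adjoint) and $\Lan_\psi$ is right exact, so $\Lan_\psi \res_\psi$ is right exact. Applying it to the presentation of $D(i)$ and comparing with the original presentation via the naturality of $\epsilon$ produces a commutative ladder whose first two vertical maps are isomorphisms; the universal property of cokernels then forces $\epsilon_{D(i)}$ to be an isomorphism as well. Naturality of $\epsilon$ in $i$ packages these isomorphisms into the desired natural isomorphism $\Lan_\psi \res_\psi D \cong D$ of functors $\mathcal{I} \to \rep \cp$. The only subtleties are the closure of $\cp'$ under joins in $\cp$ and the identification of $\Lan_\psi$ on representables at elements of $\cp'$; both are routine.
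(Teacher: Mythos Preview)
Your proposal is correct and follows essentially the same construction as the paper: collect the grades appearing in finite presentations of the $D(i)$, close under finite joins in $\cp$, and call the result $\cp'$. The paper merely asserts that the required properties are ``straightforward to verify'' and stops there, whereas you actually carry out the verification via the counit of the $\Lan_\psi \dashv \res_\psi$ adjunction and a right-exactness comparison of presentations; this is a perfectly good (and more informative) way to flesh out what the paper leaves implicit.
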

\begin{proof}
    We describe the construction of $\cp'$; the fact that it satisfies the conditions in the statement is straightforward to verify.
    The values of $D$ on objects form a finite collection $\mathcal{M}$ of finitely presented representations of $\cp$.
    Let $G$ be the finite set of all distinct grades of generator and relations in the minimal presentations of the objects in $\mathcal{M}$.
    Then $\cp'$ is obtained by taking the closure of $G$ by joins.
    In the case where $\cp = \cp_1 \times \cdots \times \cp_n$ is a grid poset, the grid poset $\cp'$ can be taken to be $\pi_1(G) \times \cdots \times \pi_n(G) \subseteq \cp$.
\end{proof}

\mainextensiontheorem*
\begin{proof}
    Existence of the left adjoint $\Lan_\iota$, as well as the fact that it is exact and fully faithful, follows from
    \cref{lemma:existence-stretching-infinite-case}.

    Let $M \in \rep \cq$, and let $f : C \to M$ be a spread-approximation.
    We have that $\Lan_\iota M$ is spread-decomposable, by \cref{lemma:stretching-of-spread-is-spread}, and the fact that the padding functor preserve spread-decomposable representations.
    So it suffices to show that any morphism $g : \K_S \to \Lan_\iota M$ with $S \subseteq \cp$ a finitely presented spread factors through $f$ via a lift $\K_S \to \Lan_\iota C$.
    This lifting problem constitutes a finite diagram of finitely presented representations of $\cp$, so, by \cref{lemma:restriction-to-finite-lattice}, we may assume that $\cp$ is finite, and the only reason why we cannot directly apply \cref{prop: fus spread exact} is that $\us{\iota (\cq)}$ may be a proper subset of $\cp$.

    Factor $\iota : \cq \to \cp$ as $\cq \xhookrightarrow{\alpha} \us{\iota (\cq)} \xhookrightarrow{\beta} \cp$, and note that $\us{\iota (\cq)}$ is an upset in $\cp$ and an upper semilattice.
    Now, by \cref{prop: fus spread exact}, we have that $\Lan_\alpha f : \Lan_\alpha C \to \Lan_\alpha M$ is a spread-approximation.
    Note that the poset morphism $\beta$ is an inclusion of a convex, full subset,
    so, by \cref{lemma:padding-functor-properties}, 
    the left Kan extension $\Lan_\beta : \rep \us{\iota (\cq)} \to \rep \cp$ exists and is isomorphic to the padding functor $\rep \us{\iota (\cq)} \to \rep \cp$.
    Thus, by \cite[Thm~3.14]{aoki-escolar-tada}, the morphism $\Lan_\beta\left(\Lan_\alpha f\right) = \Lan_\iota f$ is also a spread-approximation.
    This implies the existence of the lift of $g$, as required.
%\luis{at this point, we could reprove the result from \cite{aoki-escolar-tada} we need, but let's consider this after the rest is done.
%In fact, if we want to be fully precise we need to explain more, since I think the functor used in \cite{aoki-escolar-tada} is the intermediate extension.}
\end{proof}

\finiteresolutionsinfinitecase*
\begin{proof}
    We start by proving that every finitely presented representation admits a finite spread-resolution.
    By \cref{lemma:restriction-to-finite-lattice,theorem:main-extension-theorem}, it is sufficient to prove that every representations of a finite poset admits a finite spread-resolution, which follows from \cite[Prop.~4.5]{AENY23}.

    We now prove that spread-projectives are precisely the spread-decomposable representations.
    Let $M$ be spread-projective.
    By the above, there exists a spread-approximation $C \to M$, which, by the fact that $M$ is projective, must admit a section $M \to C$.
    It follows that $M$ is a direct summand of $C$, and thus that it is spread-decomposable.
\end{proof}

\section{Bounding global dimension with radical approximations}
\label{section:bounding-relative-global-dimension}

In \cref{section:radical-approximations}, we recall the notion of radical approximation.
In \cref{section:radical-approximations-bound-global-dimension} we prove classical bounds on relative global dimension,
which we then use in \cref{section:main-abstract-bound} to bound from above the $\acx$-global dimension in the case where $\cx$ is obtained by applying a well-behaved family of functors to a class of modules $\cy$ over a different algebra.
These arguments use minimal radical approximations, and in order to use the results it is necessary to have an explicit description of minimal radical approximations, which we give in \cref{section:results-radical-approximations}.

\subsection{Radical approximations}
\label{section:radical-approximations}
Let $\cc$ be a linear Krull--Schmidt category.
A \textit{$\cc$-radical approximation} of $M \in \cc$ is a morphism $\rho: C \to M$ such that, for all $A\in \cc$, the image of $\Hom_\cc(A, \rho) : \Hom_\cc(A, C) \to \Hom_\cc(A,M)$ is $ \rad_\cc(A,M)$.
Equivalently, $\rho:C\to M$ is 
a $\cc$-radical approximation of $M$ if $\rho \in \rad_\cc(C,A)$ and 
for all $A\in \cc$ and all $g \in \rad_\cc(A,M)$, there exists a lift $\tilde g : A \to C$ making the following diagram commute:
\begin{equation*}
    \begin{tikzcd}
                         & A \arrow[d, "g"] \arrow[ld, "\tilde g"', dashed] \\
    C \arrow[r, "\rho"'] & M \,.
    \end{tikzcd}
\end{equation*}
A $\cc$-radical approximation $\rho : C \to M$ is \emph{minimal} if $\rho$ is right minimal.

\begin{rem}
The reason why we say ``$\cc$-radical approximation'' and not just ``radical approximation'' is that, in our applications, the category $\cc$ is a full subcategory of a larger category, as in \cref{section:relative-homological-algebra}.
\end{rem}

See \cref{figure: eg spread radapp} for an illustration of a spread-radical approximation.

\subsection{Classical bounds on global dimension}
\label{section:radical-approximations-bound-global-dimension}

We now prove a standard result about the relationship between the global dimension of a finite-dimensional algebra $\blam$ and the global dimension of $\blam$ relative to $\add{\cx}$.
We prove this for completeness, and also because these bounds are usually stated with the unnecessary assumption that $\proj\bgam\subseteq \add \cx$ (see \cref{remark:relative-precov-surjective}).
Before doing so, we prove a basic result relating radical approximations and projective presentations of relative simples,
which is a relative version of a well-known result in Auslander--Reiten theory \cite[Ch.~IV,~Lem.~6.4]{assem}.

\begin{lem}
    \label{lemma:relative-radical-approximation-presentation}
    In the projectivization setup (\cref{definition:projectivization-setup}), let $X \in \cx$.
    A morphism $\rho: C \to X$ is a (minimal) $\add\cx$-radical approximation if and only if
    the induced sequence $HC \to HX \to \coker {H\rho}$ is a (minimal) projective presentation of $S_X$ .
\end{lem}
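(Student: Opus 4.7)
The plan is to translate both sides of the equivalence through the functor $H$, using that, by \cref{add G resolution thm}, $H$ restricts to an equivalence $\acx \simeq \proj \bgam$, and that, by \cref{lem: fully faithful preserve minimality}, this equivalence identifies radicals and preserves right minimality.

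First, I would rewrite the $\acx$-radical approximation condition in terms of a single test object. By the additivity of the radical in each argument (items (2)--(3) of \cref{def: cat rad}) together with the fact that every $A \in \acx$ is a direct summand of some power of $G$, the universal quantifier over $A \in \acx$ in the definition of a radical approximation collapses to the single case $A = G$. Thus $\rho \colon C \to X$ is an $\acx$-radical approximation if and only if $\im H\rho = \im \Hom_\blam(G, \rho) = \rad_\blam(G, X) \subseteq HX$, which in turn is equivalent to $\coker H\rho \cong HX/\rad_\blam(G, X) = S_X$.

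Next, I would package this as a projective presentation statement. Since $C \in \acx$, the module $HC$ is projective; since $X \in \cx$, the module $HX$ is an indecomposable projective with top $S_X$, so $HX \onto S_X$ is automatically a projective cover. Hence the preceding equivalence reads exactly as: $\rho$ is an $\acx$-radical approximation if and only if $HC \to HX \to \coker H\rho$ is a projective presentation of $S_X$.

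For the minimality clause, once $\rho$ is already a radical approximation, $H\rho$ factors as a surjection $HC \onto \rad_\blam(G,X)$ followed by the monic inclusion $\rad_\blam(G,X) \into HX$. By \cref{lem: fully faithful preserve minimality}, $\rho$ is right minimal if and only if $H\rho$ is, and since the inclusion is monic, $H\rho$ is right minimal if and only if the surjection $HC \onto \rad_\blam(G,X)$ is. As $HC$ is projective and $\rad_\blam(G,X) = \ker(HX \onto S_X)$, this is precisely the statement that $HC \onto \rad_\blam(G,X)$ is a projective cover; combined with the projective cover $HX \onto S_X$, this is exactly the definition of a minimal projective presentation of $S_X$. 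The step I anticipate to be the most delicate is this transfer of right minimality through the factorisation of $H\rho$; the other steps are formal consequences of the projectivization equivalence.
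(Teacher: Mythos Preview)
Your proposal is correct and follows essentially the same route as the paper: both compute $\im H\rho = \rad_\blam(G,X)$ directly from the definition (the paper plugs in $A=G$, you justify the reduction via additivity), identify the cokernel with $S_X$, and invoke the equivalence $\acx \simeq \proj\bgam$. The paper is terser---it cites \cite[Ch.~I,~Thm.~4.4]{ARS} for the projective cover $HX \to S_X$ and dismisses both the converse and the minimality clause as ``straightforward''---whereas you spell out the minimality transfer via \cref{lem: fully faithful preserve minimality} and the monic factorisation, which is a genuine addition of detail but not a different idea.
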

\begin{proof}
    Assume that $\rho : C \to X$ is a (minimal) $\add\cx$-radical approximation.
    By the definition of radical approximation (\cref{section:radical-approximations}), the image of $H\rho =\Hom_\blam(G,\rho) : \Hom_\blam(G,C) \to \Hom_\blam(G,X)$ equals the radical $\rad_{\add\cx}(G,X)$, which in turn equals the radical $\rad_\blam(G,X)$, since $\add{\cx}$ is a full subcategory.
    So $\coker{H\rho} = HX / \rad_{\blam}(G,X) = S_X$.
    We have $\rad_{\blam}(G,X) = HX\, \circ\, \rad_\blam(G,G)$, and that $\rad_\blam(G,G)$ is the Jacobson radical of $\bgam$, by \cref{lem: cat rad properties}(2).
    Then, the morphism $H X \to \coker{H\rho} = S_X $ is a projective cover, by \cite[Ch.~I,~Thm.~4.4]{ARS}.
    This proves the forward implication.

    The converse is straightforward, using the fact that $H$ restricts to an equivalence between $\add{\cx}$ and $\proj{\bgam}$ (\cref{add G resolution thm}(3)).
\end{proof}

%\luis{adjust this paragraph}
%For the reasons described in \cref{remark:relative-precov-surjective}, \cref{lem:gldim-lambda-leq-gldim-gamma} is often stated with the extra assumption that $\proj\bgam\subseteq \add \cx$. 
%We get the following bounds on the relative global dimension.
%We need the following standard result, which says that the global dimension of a finite dimensional algebra is attained at a simple.
%\begin{lem}[{\cite[Ch.~I,~Prop.~5.1]{ARS}}]
%    \label{lemma:global-dimension-attained-simple}
%    If $\bgam$ is a finite dimensional algebra, then 
%    \[
%    \pushQED{\qed}
%    \gldim{}{\bgam} \;\;=\; \max_{\substack{S \in \fmod \bgam\\ \text{simple}}}\;  \pdim{\bgam}{S}\,. \qedhere
%    \popQED
%    \]
%\end{lem}

\begin{lem}
    \label{lem:bound-global-dimension-radical-approximations}
    In the projectivization setup (\cref{definition:projectivization-setup}), we have
    \[
        \gldim{}{\bgam} \; \leq \; \max_{X\in \cx}\; \pdim{\add\cx}{\ker \rho_X} + 2\,,
    \]
    where $\rho_X : C_X \to X$ is an $\acx$-radical approximation, for every $X \in \cx$.
\end{lem}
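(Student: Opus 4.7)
The plan is to reduce the computation of $\gldim{}{\bgam}$ to the projective dimensions of the relative simples $S_X$ for $X \in \cx$, and then bound each $\pdim{\bgam}{S_X}$ in terms of $\pdim{\add\cx}{\ker \rho_X}$ by splicing two short complexes together.

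First, since $\bgam$ is a finite dimensional algebra (by \cref{add G resolution thm}(1)) and $H$ restricts to an equivalence $\add\cx \simeq \proj\bgam$ (by \cref{add G resolution thm}(3)), the indecomposable projective $\bgam$-modules are precisely the $HX$ with $X \in \cx$; hence the simple $\bgam$-modules (up to isomorphism) are precisely the relative simples $\{S_X\}_{X\in\cx}$. Since the global dimension of a finite dimensional algebra is the maximum of the projective dimensions of its simple modules, it suffices to show that
\[
    \pdim{\bgam}{S_X} \; \leq \; \pdim{\add\cx}{\ker \rho_X} + 2
\]
for every $X \in \cx$.

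Fix $X \in \cx$ and set $K \coloneqq \ker \rho_X$. By \cref{lemma:relative-radical-approximation-presentation}, the sequence $HC_X \to HX \to S_X \to 0$ is a projective presentation of $S_X$ in $\fmod \bgam$. Since $H = \Hom_\blam(G,-)$ is left exact, applying it to the short exact sequence $0 \to K \to C_X \to \mathrm{im}\,\rho_X \to 0$ identifies $HK$ with the kernel of $HC_X \to HX$, giving an exact sequence
\[
    0 \to HK \to HC_X \to HX \to S_X \to 0.
\]
Let $d \coloneqq \pdim{\add\cx}{K}$, and pick an $\add\cx$-resolution $0 \to D_d \to \cdots \to D_0 \to K \to 0$ of length $d$. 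By \cref{add G resolution thm}(5) (or by applying $H$ and using the equivalence $\add\cx \simeq \proj\bgam$), the image $0 \to HD_d \to \cdots \to HD_0 \to HK \to 0$ is a projective resolution of $HK$ in $\fmod \bgam$. Splicing this with the four-term sequence above yields a projective resolution of $S_X$ of length at most $d + 2$, establishing the desired bound.

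The proof is essentially routine once \cref{lemma:relative-radical-approximation-presentation} and \cref{add G resolution thm} are in hand; the only subtlety is the left-exactness step identifying $HK$ with the kernel of $HC_X \to HX$, which follows immediately from $H$ being a Hom functor. No minimality of $\rho_X$ is needed for the inequality, only that $\rho_X$ is an $\add\cx$-radical approximation.
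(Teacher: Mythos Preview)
Your proposal is correct and follows essentially the same approach as the paper: reduce to the simples $S_X$, use \cref{lemma:relative-radical-approximation-presentation} to obtain the projective presentation $HC_X \to HX \to S_X$, invoke left-exactness of $H$ to identify $H(\ker\rho_X)$ with $\ker(H\rho_X)$, and splice with an $\add\cx$-resolution of $\ker\rho_X$ transported through $H$. Your closing observation that minimality of $\rho_X$ is not needed is apt; the paper's phrasing ``$H\rho_X$ is a minimal projective presentation'' in its proof slightly overstates what is required for the inequality.
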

\begin{proof}
    By the standard fact that the global dimension of a finite-dimensional algebra is attained at a simple \cite[Ch.~I,~Prop.~5.1]{ARS}, we have
    \[
        \gldim{}{\bgam} = \max_{X\in \cx}\; \pdim{\bgam}{S_X} .
    \]
    Now, note that $H \rho_X$ is a minimal projective presentation of $S_X$, by \cref{lemma:relative-radical-approximation-presentation}.
    Since~$H$ is left exact, we have $\ker (H \rho_X) \cong H(\ker \rho_X)$.
    Let $Z_\bullet \to \ker \rho_X$ be a minimal $\acx$-resolution.
    By \cref{add G resolution thm}(3), the following is a minimal projective resolution of $S_X$:
    \begin{equation*}
        \begin{tikzcd}
        \cdots \arrow[r, "H z_1"] & H Z_0 \arrow[r, dashed] \arrow[rd, "H z_0"] & H \rho_X \arrow[r, "H\rho_X"] & H X \arrow[r] & S_X \\
                                  &                                             & H(\ker \rho_X) \arrow[u, hook]  &                          &                
        \end{tikzcd}
    \end{equation*}
    It follows that $\pdim{\bgam}{S_X} \leq 2+\pdim{\bgam}{\ker H \rho_X}$, so that
    \begin{align*}
        \gldim{}{\bgam}\; &=\; \max_{X\in \cx}\; \pdim{\bgam}{S_X} \\
                &\leq\; \max_{X\in \cx}\; \pdim{\bgam}{\ker H \rho_X} + 2 \\
                &=\; \max_{X\in \cx}\; \pdim{\add\cx}{\ker \rho_X} + 2\, ,
    \end{align*}
    where in the second equality we used \cref{add G resolution thm}(5), concluding the proof.
\end{proof}

The following bounds for relative global dimension are standard \cite[Lem.~2.1]{ehis04}.

\begin{prop}
    \label{proposition:standard-gldim-bounds}
    In the projectivization setup (\cref{definition:projectivization-setup}), we have:
    \[
          \gldim{\add \cx}{\fmod \blam}
          \;\leq\;
          \gldim{}{\bgam}
          \;\leq\;
          \gldim{\add \cx}{\fmod \blam}+ 2 \,.
    \]
    If, moreover, we have $\inj\blam \subseteq \add\cx$, then
    \[
        \gldim{}{\bgam}
        \;= \;
        \max_{X\in \cx}\; \pdim{\add\cx}{\ker \rho_X} + 2
        \;= \;
        \gldim{\add \cx}{\fmod \blam} + 2\,,
    \]
    where $\rho_X$ is a minimal $\acx$-approximation of $X$ for every $X \in \cx$.
\end{prop}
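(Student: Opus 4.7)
The plan is to establish the two inequalities of the main display first, and then attend to the equality case by reducing it to a single remaining bound.

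The first inequality $\gldim{\add\cx}{\fmod\blam} \leq \gldim{}{\bgam}$ is immediate from \cref{add G resolution thm}(5): it gives $\pdim{\add\cx}{M} = \pdim{\proj\bgam}{HM}$ for every $M \in \fmod\blam$, and the right-hand side is at most $\gldim\bgam$, so taking the supremum over $M$ yields the claim. The inequality may be strict in general because not every $\bgam$-module is of the form $HM$. The second inequality $\gldim\bgam \leq \gldim{\add\cx}{\fmod\blam} + 2$ follows by combining \cref{lem:bound-global-dimension-radical-approximations} with the tautological observation that $\pdim{\add\cx}{\ker\rho_X} \leq \gldim{\add\cx}{\fmod\blam}$ for every $X \in \cx$, valid because $\ker\rho_X$ is an object of $\fmod\blam$. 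Chaining these bounds also gives $\gldim\bgam \leq \max_X \pdim{\add\cx}{\ker\rho_X} + 2 \leq \gldim{\add\cx}{\fmod\blam} + 2$, so in the equality case it suffices to prove the reverse bound $\gldim{\add\cx}{\fmod\blam} \leq \max_X \pdim{\add\cx}{\ker\rho_X}$.

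For this remaining bound under the hypothesis $\inj\blam \subseteq \add\cx$, the strategy is to transfer projective dimension from an arbitrary module to the kernel of the radical approximation of an indecomposable injective. Given $M \in \fmod\blam$ with $\pdim{\add\cx}{M} = d \geq 1$, take an injective envelope $\iota \colon M \hookrightarrow I$, and pass to an indecomposable summand so that $I \in \cx$. Choose a minimal $\add\cx$-approximation $\phi \colon C \to M$, which exists by \cref{cor:approx exist krull}. Since $\iota$ is not a split monomorphism on any indecomposable summand of $C$, the composite $\iota\phi$ lies in $\rad_{\add\cx}(C, I)$, and hence lifts through the minimal radical approximation $\rho_I \colon C_I \to I$ by a morphism $h \colon C \to C_I$ satisfying $\rho_I h = \iota\phi$. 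Restriction then yields a comparison morphism $\ker\phi \to \ker\rho_I$, which is the starting point for extracting the desired bound.

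The main obstacle is the last step: translating the comparison morphism into the inequality $\pdim{\add\cx}{\ker\rho_I} \geq d$. I expect this to proceed by fitting the comparison into a short exact sequence whose remaining term lies in $\add\cx$, so that the standard dimension-shift together with the identity $\pdim{\add\cx}{\ker\phi} = d - 1$, supplied by the minimality of the $\add\cx$-approximation $\phi$, yields the required conclusion; an induction on $d$ (or a dual formulation in terms of $\Ext_\bgam$ applied to the short exact sequence $0 \to HM \to HI \to \coker(H\iota) \to 0$) may be needed to handle iterated syzygies cleanly.
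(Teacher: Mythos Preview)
Your treatment of the two inequalities in the first display is correct and matches the paper exactly.

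The gap is in the equality case. Your reduction is insufficient: proving $\gldim{\add\cx}{\fmod\blam} \leq \max_{X}\pdim{\add\cx}{\ker\rho_X}$ only yields the \emph{second} of the two displayed equalities. From the chain
\[
\gldim{\add\cx}{\fmod\blam}\;\leq\;\gldim{}{\bgam}\;\leq\;\max_X\pdim{\add\cx}{\ker\rho_X}+2\;\leq\;\gldim{\add\cx}{\fmod\blam}+2
\]
you would still only know that $\gldim{}{\bgam}$ lies somewhere in an interval of length $2$, not that it equals the right endpoint. What is actually needed is the bound $\gldim{\add\cx}{\fmod\blam}+2\leq\gldim{}{\bgam}$, and your radical-approximation comparison does not produce this. (There are also loose ends in the argument itself: passing to an indecomposable summand of the injective envelope destroys the embedding $M\hookrightarrow I$, and the claimed short exact sequence relating $\ker\phi$ and $\ker\rho_I$ with third term in $\add\cx$ is never constructed.)

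The paper's route is more direct and avoids radical approximations entirely in this step. For $M\in\fmod\blam$, take a \emph{two-term} injective copresentation $M\to I_0\xrightarrow{c} I_1$. Applying the left exact functor $H$ gives an exact sequence $0\to HM\to HI_0\to HI_1\to\coker Hc\to 0$. Since $\inj\blam\subseteq\add\cx$, both $HI_0$ and $HI_1$ are $\bgam$-projective by \cref{add G resolution thm}(3), so $HM$ is a second $\bgam$-syzygy and $\pdim{\bgam}{HM}+2\leq\gldim{}{\bgam}$; translating back via \cref{add G resolution thm}(5) gives $\pdim{\add\cx}{M}+2\leq\gldim{}{\bgam}$. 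Your parenthetical about the short exact sequence $0\to HM\to HI\to\coker(H\iota)\to 0$ is pointing in the right direction, but a single injective only makes $HM$ a first syzygy and yields $+1$, not $+2$; the second injective term is essential.
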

\begin{proof}
    \noindent\emph{First inequality.}
    This follows directly from \cref{add G resolution thm}(5).

    \medskip

    \noindent\emph{Second inequality.}
    For each $X \in \cx$, let $\rho_X : C_X \to X$ be a minimal $\cx$-radical approximation. 
    We can then compute as follows
    \begin{align*}
        \gldim{}{\bgam} &\leq \max_{X\in \cx}\; \pdim{\add\cx}{\ker \rho_X} + 2 \\
                &\leq \gldim{\add \cx}{\fmod \blam} + 2\,,
    \end{align*}
    Where in the first inequality we used \cref{lem:bound-global-dimension-radical-approximations}, and in the second we used the definition of global dimension.

    \medskip

    \noindent\emph{Equalities.}
    By the inequalities above, it is enough to show that 
    $\gldim{\add \cx}{\fmod \blam} + 2 \leq \gldim{}{\bgam}$.
    Let $M \in \fmod \blam$, and let $M \to I_0 \xrightarrow{c} I_1$ be a $\blam$-injective copresentation.
    The induced exact sequence $0 \to HM \to H I_0 \to H I_1 \to \coker H c \to 0$ implies that
    \[
        \pdim{\bgam}{HM} \leq \max \left\{
            \pdim{\bgam}{H I_0},
            \pdim{\bgam}{H I_1}-1,
            \pdim{\bgam}{H c}-2
              \right\},
    \]
    which simplifies to $\pdim{\bgam}{HM} \leq \max \left\{0, \pdim{\bgam}{H c}-2 \right\}$,
    since $H I_0$ and $H I_1$ are $\bgam$-projective, by
    \cref{add G resolution thm} and the assumption that $\inj{\blam} \subseteq \add{\cx}$.
    This implies that 
    $\pdim{\bgam}{HM} + 2 \leq \pdim{\bgam}{H c} \leq \gldim{}{\bgam}$, so that
    $\pdim{\add{\cx}}{M} + 2 \leq \gldim{}{\bgam}$, by \cref{add G resolution thm}(5).
\end{proof}

The following remark relates our constructions to those in \cite{asashiba24}.

\begin{rem}\label{rem:comparision-with-asashiba}
    In the projectivization setup (\cref{definition:projectivization-setup}), let $X \in \cx$.
    In the proof of \cref{proposition:standard-gldim-bounds}, we spliced a minimal $\cx$-radical approximation of $\rho_X : C_X \to X$ with a minimal $\add{X}$-resolution $Z_\bullet \to \ker \rho_X$ to obtain a minimal projective resolution $\cdots \to H Z_1 \to H Z_0 \to H C_X$ of the relative simple $S_X$.
    The corresponding sequence $\cdots \to Z_1 \to Z_0 \to C_X$ obtained using \cref{add G resolution thm}(3) is isomorphic to the Koszul resolution $\mathscr{K}_\bullet(X)$ of $X$ relative to $\add\cx$, whose dual is defined in \cite[Def.~3.2]{asashiba24}.
    This follows from \cite[Thm.~7]{asashiba24}, which says that $H \mathscr{K}_\bullet(X)$ is a minimal projective resolution of $S_X$.
    The interesting fact is that, a priori, $H \mathscr{K}_\bullet(X)$ is different from the projective resolution we use, and the isomorphism between them is a posteriori.
    This also means that the explicit description of radical approximations in \cref{prop: sum irr rad app} can be used to give an alternative construction the Koszul co-resolutions of \cite{asashiba24}.
\end{rem}

\subsection{Main technical bound on global dimension}
\label{section:main-abstract-bound}

The goal of this subsection is to prove the following result.

\begin{prop}\label{thm: big small radapp}
    Suppose that we are given the following:
    \begin{itemize}
    \item Two finite-dimensional algebras $\Delta$ and $\blam$. 
    \item A finite set $\cx$ of pairwise non-isomorphic indecomposable $\Lambda$-modules.
    
    \item A finite set $\cy$ of pairwise non-isomorphic indecomposable $\Delta$-modules. 
    \item A finite set $\Phi$ of additive functors $\fmod\Delta \to \fmod\blam$. 
    \end{itemize}
    Assume that the following are satisfied:
    %\luis{1 and 2 are 1 now}
    %\luis{3 is 2}
    %\luis{5 is 3}
    %\luis{4 is 4}
    \begin{enumerate}
        \item Each $\phi \in \Phi$ is exact and fully faithful.
        %\item Each $\phi \in \Phi$ is exact. 
        \item Each $\phi \in \Phi$ maps $\acy$-approximations to $\acx$-approximations.
        \item Each $\phi \in \Phi$ restricts to a functor $\add\cy \to \add\cx$.
        \item For every $X \in \cx$, there exists $\phi_X \in \Phi$ such that $\add{\phi_X \cy}$ contains $X$ and the domain of a minimal $\add{\cx}$-radical approximation of $X$.
    \end{enumerate}
    Then, we have
    \begin{align*}
          \gldim{\add \cx}{\fmod \blam} &\leq 
          \gldim{\add\cy}{\fmod \Delta} \, +2\, .
    \end{align*}
    If, moreover, we have $\inj\bgam \subseteq \add\cx$ and $\inj\Delta \subseteq \add\cy$, then
    \begin{align*}
          \gldim{\add \cx}{\fmod \blam} &= 
          \gldim{\add\cy}{\fmod \Delta} \, .
    \end{align*}
\end{prop}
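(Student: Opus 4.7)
The plan is to apply \cref{proposition:standard-gldim-bounds} and \cref{lem:bound-global-dimension-radical-approximations} in the projectivization setup $(\blam,\cx)$, reducing the inequality to bounding $\pdim{\add\cx}{\ker\rho_X}$ for each $X\in\cx$, where $\rho_X:C_X\to X$ is a minimal $\add\cx$-radical approximation. Fix $X\in\cx$ and use assumption (4) to choose $\phi_X\in\Phi$ with both $X$ and $C_X$ in $\add{\phi_X\cy}$. Because $\phi_X$ is additive and fully faithful, the isomorphism $\End_\Delta(Z)\cong\End_\blam(\phi_X Z)$ lets one split idempotents and conclude that every object of $\add{\phi_X\cy}$ is isomorphic to $\phi_X$ applied to an object of $\add\cy$; hence $X\cong\phi_X(Y_X)$ for some $Y_X\in\cy$ (by indecomposability) and $C_X\cong\phi_X(C')$ for some $C'\in\add\cy$. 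Full faithfulness of $\phi_X$ then yields a unique $\rho':C'\to Y_X$ with $\rho_X=\phi_X(\rho')$.

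The key step will be to verify that $\rho'$ is itself a minimal $\add\cy$-radical approximation of $Y_X$. Radicality and right minimality of $\rho'$ follow directly from the isomorphism $\rad_\Delta(-,-)\cong\rad_\blam(\phi_X-,\phi_X-)$ provided by \cref{lem: fully faithful preserve minimality}. For the approximation property, given $g\in\rad_{\add\cy}(A,Y_X)$ with $A\in\add\cy$, the image $\phi_X(g)$ lies in $\rad_{\add\cx}(\phi_X A,X)$, and since $\rho_X=\phi_X(\rho')$ is an $\add\cx$-radical approximation, $\phi_X(g)$ factors through $\phi_X(\rho')$ via some $\tilde h:\phi_X A\to\phi_X C'$. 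By fullness, $\tilde h=\phi_X(\tilde g)$ for some $\tilde g:A\to C'$, and faithfulness gives $\rho'\circ\tilde g=g$.

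Next, I will establish the identity $\pdim{\add\cx}{\phi N}=\pdim{\add\cy}{N}$ for every $N\in\fmod\Delta$ and every $\phi\in\Phi$. Assumptions (1)--(3), combined with \cref{lem: fully faithful preserve minimality}, ensure that $\phi$ sends every minimal $\add\cy$-approximation to a minimal $\add\cx$-approximation; exactness of $\phi$ gives $\phi(\ker f)=\ker\phi(f)$, so iterating shows that the image under $\phi$ of a minimal $\add\cy$-resolution of $N$ is a minimal $\add\cx$-resolution of $\phi N$ of the same (possibly infinite) length. Specializing to $N=\ker\rho'$ and using $\ker\rho_X=\phi_X(\ker\rho')$ yields
\[\pdim{\add\cx}{\ker\rho_X}\;=\;\pdim{\add\cy}{\ker\rho'}\;\leq\;\gldim{\add\cy}{\fmod\Delta},\]
and combining with the opening reduction gives $\gldim{\add\cx}{\fmod\blam}\leq\gldim{\add\cy}{\fmod\Delta}+2$.

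For the equality under the injective hypotheses, the assumption $\inj\blam\subseteq\add\cx$ upgrades \cref{proposition:standard-gldim-bounds} to the equality $\gldim{\add\cx}{\fmod\blam}=\max_{X\in\cx}\pdim{\add\cx}{\ker\rho_X}$, removing the $+2$ from the forward bound and yielding $\gldim{\add\cx}{\fmod\blam}\leq\gldim{\add\cy}{\fmod\Delta}$. The reverse inequality is immediate from the pdim identity: fixing any $\phi\in\Phi$ (which exists whenever $\cx\neq\emptyset$, by (4)), one has $\gldim{\add\cy}{\fmod\Delta}=\sup_{N\in\fmod\Delta}\pdim{\add\cx}{\phi N}\leq\gldim{\add\cx}{\fmod\blam}$. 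The main obstacle is the second paragraph: verifying the approximation property for $\rho'$ over \emph{all} $A\in\add\cy$, not merely those in the image of $\phi_X$, relies crucially on the inclusion $C_X\in\add{\phi_X\cy}$ from (4), which is precisely what allows $\rho_X$ to be realized as $\phi_X(\rho')$ in the first place.
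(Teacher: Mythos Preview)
Your proof is correct and follows the same overall architecture as the paper's: reduce via \cref{lem:bound-global-dimension-radical-approximations} to bounding $\pdim{\add\cx}{\ker\rho_X}$, pull $\rho_X$ back through $\phi_X$ to a minimal $\add\cy$-radical approximation, and transport projective dimensions along $\phi$ (the content of the paper's \cref{prop: domain rad app iso} and \cref{cor: ker rad app iso}). Two small differences are worth flagging. First, for the claim that the pulled-back map $\rho'$ is a minimal $\add\cy$-radical approximation, the paper argues by mutual factorization with a chosen minimal $\add\cy$-radical approximation $\rho_Y$ (using right minimality on both sides), whereas you verify the radical-approximation property directly by lifting $\phi_X(g)$ through $\rho_X$ and pulling back via full faithfulness; both arguments are valid. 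Second, for the reverse inequality in the equality statement, the paper invokes the hypothesis $\inj\Delta\subseteq\add\cy$ together with \cref{proposition:standard-gldim-bounds} to identify $\gldim{\add\cy}{\fmod\Delta}$ with $\max_{Y\in\cy}\pdim{\add\cy}{\ker\sigma_Y}$, while your argument obtains $\gldim{\add\cy}{\fmod\Delta}\leq\gldim{\add\cx}{\fmod\Lambda}$ directly from the identity $\pdim{\add\cy}{N}=\pdim{\add\cx}{\phi N}$; your route is slightly sharper, since it in fact establishes this direction without using $\inj\Delta\subseteq\add\cy$.
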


We briefly describe the main ideas that go into the proof of \cref{thm: big small radapp}.
Assumptions (1), (3), and (4) ensure that every $\acx$-radical approximation can be constructed by applying some $\phi_X \in \Phi$ to some $\acy$-radical approximation, 
and this is the content of \cref{prop: domain rad app iso}.
Assumptions (1), (2), and (3) ensure that each $\phi \in \Phi$ maps $\acy$-resolutions to $\acx$-resolutions,
and this is the content of \cref{cor: ker rad app iso}.

%\begin{rem}
%    \luis{The following is already being said in the overview of approach section, \cref{section:overview-of-approach}}
%    Our main result \cref{theorem:stabilization} is proven by constructing a solution to the following homological algebra problem:
%    Given a finite set $\cx$ of pairwise non-isomorphic indecomposable $\blam$-modules. Find a finite set $\cy$ of indecomposable $\Delta$-modules and a set of functors $\Phi$, satisfying the hypothesis of \cref{thm: big small radapp}.
%    %In \cref{thm: bound spread gl dim}, $\blam$ is the incidence algebra of some finite grid $\cp$
%\end{rem}

%We need the following two properties satisfied under the hypotheses of \cref{thm: big small radapp}.

\begin{lem}\label{prop: domain rad app iso}
    Under the hypotheses of \cref{thm: big small radapp},
    %In the relative cocovering setup (\cref{def:small-big-setup}), let $X \in \cx$.
    for every $X \in \cx$,
    %, let $Y \in \cy$.
    there exists a functor $\phi_X \in \Phi$ and a minimal $\add{\cy}$-radical approximation $r_X : D \to Y$
    such that $\phi_X Y \cong X$ and $\phi_X r_X : \phi_X D \to \phi_X Y \cong X$ is a minimal $\add{\cx}$-radical approximation.
    %There exists $\phi \in \Phi$ and a minimal $\add{\cy}$-radical approximation $\rho_Y : C_Y \to Y$ such that $\phi \rho_Y : \phi C_Y \to \phi Y$ is a minimal $\add{\cx}$-radical approximation.
\end{lem}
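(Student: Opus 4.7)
The plan is to fix $X \in \cx$, apply hypothesis (4) of \cref{thm: big small radapp} to obtain $\phi_X \in \Phi$ together with a minimal $\add{\cx}$-radical approximation $\rho : C \to X$ with $X, C \in \add{\phi_X \cy}$, and then to ``pull $\rho$ back'' along $\phi_X$ via full faithfulness. The first step is to identify $Y \in \cy$ and $D \in \add{\cy}$ together with isomorphisms $\beta : X \xrightarrow{\sim} \phi_X Y$ and $\alpha : \phi_X D \xrightarrow{\sim} C$. By full faithfulness (hypothesis (1)), the functor $\phi_X$ induces an isomorphism $\End_\Delta(Y') \cong \End_\blam(\phi_X Y')$ for each $Y' \in \cy$, so each $\phi_X Y'$ is indecomposable; since $X$ is indecomposable and lies in $\add{\phi_X \cy}$, Krull--Schmidt in $\add \cx$ then yields $Y$ and $\beta$. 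The analogous Krull--Schmidt argument (without indecomposability) furnishes $D$ and $\alpha$.

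Setting $\rho' := \beta \circ \rho \circ \alpha : \phi_X D \to \phi_X Y$, the morphism $\rho'$ is a minimal $\add \cx$-radical approximation of $\phi_X Y$ isomorphic to $\rho$, so by full faithfulness there is a unique $r : D \to Y$ with $\phi_X r = \rho'$. It then remains to verify that $r$ is a minimal $\add \cy$-radical approximation of $Y$; granting this, $\phi_X r = \rho'$ is by construction a minimal $\add \cx$-radical approximation of $\phi_X Y \cong X$.

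For the radical approximation property of $r$, fix $A \in \add \cy$ and $g \in \rad_{\add \cy}(A, Y)$. By hypothesis (3), $\phi_X$ restricts to a fully faithful additive functor $\add \cy \to \add \cx$, so \cref{lem: fully faithful preserve minimality} provides a natural isomorphism $\rad_{\add \cy}(A, Y) \cong \rad_{\add \cx}(\phi_X A, \phi_X Y)$; in particular, $\phi_X g$ lies in $\rad_{\add \cx}(\phi_X A, \phi_X Y)$. The lifting property of $\rho'$ then supplies $\tilde h : \phi_X A \to \phi_X D$ with $\rho' \tilde h = \phi_X g$, and full faithfulness yields a unique $\tilde g : A \to D$ with $\phi_X \tilde g = \tilde h$. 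The identity $\phi_X(r \tilde g) = \rho' \tilde h = \phi_X g$ then forces $r \tilde g = g$. For right minimality of $r$, \cref{lem: fully faithful preserve minimality} states that $r$ is right minimal if and only if $\phi_X r = \rho'$ is, and the latter holds by the assumed minimality of $\rho$.

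No step is expected to pose a real obstacle, since the argument amounts to transferring a lifting problem along the fully faithful functor $\phi_X$. The two points that require some care are the Krull--Schmidt identification of the preimage $Y$ of $X$ under $\phi_X$ (which is where hypothesis (4) combines with full faithfulness and the indecomposability of $X$), and the correct application of \cref{lem: fully faithful preserve minimality} to the restriction $\phi_X : \add \cy \to \add \cx$ rather than to the ambient categories $\fmod\Delta$ and $\fmod\blam$.
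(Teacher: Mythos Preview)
Your argument is correct and follows the same overall outline as the paper: use hypothesis~(4) to place $X$ and the domain $C$ of a minimal $\add\cx$-radical approximation $\rho$ inside $\add{\phi_X\cy}$, then use full faithfulness to find a preimage $r : D \to Y$ with $\phi_X r \cong \rho$, and finally check that $r$ is a minimal $\add\cy$-radical approximation.

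The difference lies in this last verification. You argue \emph{directly}: transport an arbitrary $g \in \rad_{\add\cy}(A,Y)$ to $\phi_X g \in \rad_{\add\cx}(\phi_X A,\phi_X Y)$ via \cref{lem: fully faithful preserve minimality}, lift it through $\rho'$, and pull the lift back through $\phi_X$. The paper instead argues \emph{indirectly}: it fixes an independently chosen minimal $\add\cy$-radical approximation $\rho_Y$ of $Y$ and shows that $r$ and $\rho_Y$ factor through one another (using that each lies in the relevant radical, again via \cref{lem: fully faithful preserve minimality}); since both are right minimal, they are isomorphic by \cite[Ch.~I,~Prop.~2.1]{ARS}. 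Your route is a bit more self-contained, as it does not appeal to the existence of $\rho_Y$; the paper's route is shorter once that existence is granted.

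One small point: the definition of radical approximation in \cref{section:radical-approximations} requires the image of $\Hom_{\add\cy}(A,r)$ to \emph{equal} $\rad_{\add\cy}(A,Y)$, whereas you only check the lifting direction (image contains the radical). The reverse containment is immediate by the same mechanism---for any $f : A \to D$ one has $\phi_X(rf) = \rho'(\phi_X f) \in \rad_{\add\cx}(\phi_X A,\phi_X Y)$ since $\rho'$ is a radical approximation, and the isomorphism of radicals from \cref{lem: fully faithful preserve minimality} gives $rf \in \rad_{\add\cy}(A,Y)$---but it is worth stating explicitly.
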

\begin{proof}
    By assumption (4) of \cref{thm: big small radapp}, there exists a minimal $\add{\cx}$-radical approximation $\rho_X : C_X \to X$ and a functor $\phi_X\in \Phi$ such that $\add{\phi_X \cy}$ contains $X$ and $C_X$.
    %\justin{I changed this to accomodate the assumptions of \cite[Ch.~I,~Prop.~2.1]{ARS}}
    %Let $\rho_X : C_X \to X$ be an $\add{\cx}$-radical approximation.
    By assumptions (1) and (3), each $\phi_X \in \Phi$ restricts to an equivalence of categories $\acy \to \add{\phi_X \cy}$, so there exists $\phi_X \in \Phi$, and $r_X : D \to Y$ in $\add \cy$ such that $\phi_X r_X \cong \rho_X$.
    It suffices to show that $r_X$ is a minimal $\acy$-radical approximation.

    Let $\rho_Y : C_Y \to Y$ be a minimal $\acy$-radical approximation.
    Note that $\rho_Y$ is a right minimal morphism, by definition, and that $r_X$ is also a right minimal morphism, by \cref{lem: fully faithful preserve minimality}.
    So, in order to prove that $r_X$ and $\rho_Y$ are isomorphic, it is enough to prove that $r_X$ factors through $\rho_Y$, and that $\rho_Y$ factors through $r_X$, by \cite[Ch.~I,~Prop.~2.1]{ARS}.
    %To show that $r_X$ factors through $\rho_Y$, we use the fact that $\rho_Y$ is a radical approximation, and to show that $\rho_Y$ factors through $r_X$, we use the fact that $\phi_X r_X$ is a radical approximation, so that $\phi_X \rho_Y$ factors through $\phi_X r_X$, which implies that $\rho_Y$ factors through $r_X$, since $\phi_X$ is fully faithful.
    
    For the first factorization, \cref{lem: fully faithful preserve minimality} ensures that $r_X \in \rad_{\add\cy}(D,Y)$. Since $\rho_Y$ is a radical approximation, we have $r_X$ factors through $\rho_Y$. 
    Similarily, \cref{lem: fully faithful preserve minimality} ensures that $ \phi_X \rho_Y ~\in \rad_\blam( \phi_X C_Y, \phi_X Y) $. By assumption (3), $\phi_X \rho_Y$ is contained in $\acx$, so $\phi_X \rho_Y$ factors through $\phi_X r_X$, because $\rho_X = \phi_X r_X $. Thus, $\rho_Y$ factors through $r_X$, since $\phi_X$ is fully faithful.
\end{proof}

\begin{lem}\label{cor: ker rad app iso}
    Under the hypotheses of \cref{thm: big small radapp}, each $\phi \in \Phi$ maps (minimal) $\acy$-resolutions to (minimal) $\acx$-resolutions. In particular, for all $M \in \fmod\Delta$ we have $\pdim{\acy}{M} = $ $\pdim{\acx}{\phi M}$.
\end{lem}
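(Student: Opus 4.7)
The plan is to unpack the definition of a (minimal) $\cc$-resolution from \cref{section:relative-homological-algebra} and verify each clause after applying $\phi$, using the four hypotheses of \cref{thm: big small radapp} in turn. Let $C_\bullet \to M$ be a (minimal) $\acy$-resolution, with differentials $f_i$ factoring as $g_i : C_i \to \ker f_{i-1}$ followed by the canonical inclusion, where each $g_i$ is a (minimal) right $\acy$-approximation. Applying $\phi$ termwise, hypothesis (3) ensures $\phi C_i \in \acx$; exactness of $\phi$ (hypothesis (1)) yields a natural identification $\phi \ker f_{i-1} \cong \ker \phi f_{i-1}$, so the induced factorization of $\phi f_i$ through its kernel is precisely $\phi g_i$; and hypothesis (2) then says $\phi g_i$ is a right $\acx$-approximation of $\ker \phi f_{i-1}$. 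If the $g_i$ were moreover right minimal, then \cref{lem: fully faithful preserve minimality} applied to the fully faithful $\phi$ shows $\phi g_i$ is right minimal as well. This establishes the main claim.

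For the dimension equality, I would produce a minimal $\acy$-resolution of $M$ of length exactly $n = \pdim{\acy}{M}$ by iteratively choosing minimal right $\acy$-approximations, which exist in the projectivization setup by \cref{cor:approx exist krull}. Applying $\phi$ yields, by the first part, a minimal $\acx$-resolution of $\phi M$. Since $\phi$ is faithful, we have $\phi C_i = 0$ if and only if $C_i = 0$, so the length is preserved, giving $\pdim{\acx}{\phi M} \leq n$; the reverse inequality holds because minimality of the image resolution pins down $\pdim{\acx}{\phi M}$ as its length, which is $n$. Hence $\pdim{\acx}{\phi M} = \pdim{\acy}{M}$.

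I do not anticipate a serious obstacle, as the proof is essentially bookkeeping on top of \cref{lem: fully faithful preserve minimality}. The subtlest point is using fully faithfulness simultaneously in two ways: to transfer right minimality of the approximations $g_i$, and to detect which terms vanish in the resolution (faithfulness giving zero-detection). Beyond that, each remaining verification is a direct invocation of the corresponding hypothesis of \cref{thm: big small radapp}.
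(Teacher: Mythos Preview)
Your proposal is correct and follows essentially the same approach as the paper: both use exactness (hypothesis~(1)) to preserve kernels, hypothesis~(2) to preserve approximations, and \cref{lem: fully faithful preserve minimality} to preserve right minimality. Your version is simply more explicit, spelling out the role of hypothesis~(3) and the zero-detection via faithfulness for the dimension equality, which the paper compresses into ``so the result follows.''
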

\begin{proof}
   Suppose that $R_\bullet \to M$ is a minimal $\acy$-resolution in $\fmod\Delta$.
   By assumption (1), each $\phi \in \Phi$ preserves kenerls.
   By assumption (2), $\phi$ preserves $\acx$-approximations of kernels, so $\phi R_\bullet$ is an $\acx$-resolution of $\phi M$.
   Moreover, the resolution $\phi R_\bullet$ is minimal, by \cref{lem: fully faithful preserve minimality}, so the result follows.
\end{proof}

\begin{proof}[Proof of \cref{thm: big small radapp}]
    Fix, for each $Y \in \cy$, a minimal $\acy$-radical approximation $\sigma_Y : C_Y \to Y$, and
    for each $X \in \cx$, a minimal $\acx$-radical approximation $\rho_X : C_X \to X$.
    
    By \cref{prop: domain rad app iso}, for each $X \in \cx$, there exists $\phi_X \in \Phi$ and a minimal $\acy$-radical approximation $r_X$ such that $\phi_X r_X \cong \rho_X$,
    and by assumption (1) of \cref{thm: big small radapp}, we have $\ker \phi_X r_X \cong \phi_X \ker r_X$.
    Using this and \cref{cor: ker rad app iso}, we get
    \begin{align*}
        \pdim{\add\cx}{\ker \rho_X} &= \pdim{\add\cx}{\ker \phi_X r_X}\\
                                    &= \pdim{\add\cx}{\phi_X \ker r_X}\nonumber\\
                                &= \pdim{\add\cy}{\ker r_X},
    \end{align*}
    
    Let $\bgam \coloneqq \End_\blam\left(\bigoplus_{X \in \cx} X\right)$, so that we are in the projectivization setup (\cref{definition:projectivization-setup}).
    To prove the first claim in \cref{thm: big small radapp}, we bound as follows:
    \begin{align*}
            \gldim{\acx}{\fmod \blam}&\leq \gldim{}{\bgam}
            \\&\leq \max_{X\in \cx}\; \pdim{\add\cx}{\ker \rho_X} + 2
            \\&= \max_{X\in \cx}\; \pdim{\add\cy}{\ker r_X} + 2
            \\&= \max_{Y\in \cy}\; \pdim{\add\cy}{\ker \sigma_Y} + 2
            \\&\leq \gldim{\acy}{\fmod \Delta} + 2
            \,.
    \end{align*}
    The first line in this computation follows
    from \cref{proposition:standard-gldim-bounds}. The second line follows
    from \cref{lem:bound-global-dimension-radical-approximations}. The third line follows from %\cref{eq:pdim-ler-big-small}. 
    the argument in the second paragraph.
    The fifth line is immediate from the definition of relative global dimension (\cref{section:relative-homological-algebra}). 
    For the fourth line, let $Y\in \cy$. Then $X'=\phi Y \in \cx$ has a minimal $\acx$-radical approximation $\rho_{X'}= \phi_{X'} r_{X'} $. 
    Since $r_{X'}$ is a minimal $\acy$-radical approximation of $Y$, we have $\sigma_Y \cong r_{X'}$. It follows that the fourth line is less than or equal to the third. The reverse inequality is trivial.

    To prove the second claim, assume that $\inj\bgam \subseteq \add\cx$ and $\inj\Delta \subseteq \add\cy$.
    In this case, the first inequality in the chain above is an equality,
    and $\gldim{}{\bgam}$ also equals $\gldim{\acx}{\fmod \blam} + 2$, thanks to \cref{proposition:standard-gldim-bounds}.
    So it is enough to prove that the last inequality in the chain above is also an equality, which also follows from \cref{proposition:standard-gldim-bounds}.
\end{proof}

\subsection{Explicit minimal radical approximations}
\label{section:results-radical-approximations}

We give a remark about nomenclature.

\begin{rem}\label{rem:comparison-with-CM}
    Let $\ca$ be an additive subcategory of $\fmod \blam$ for some finite-dimensional algebra~$\blam$.
    In Definition~2 of \cite{chalom2006irreducible}, on which we rely, an $\ca$-radical approximation is referred to as a \textit{$\ca$-right almost split morphism}. 
    The reason for this is that the $\fmod \blam$-radical approximations of an indecomposable $C \in \fmod \blam$ are exactly the right almost split morphisms ending at $C$ in the usual sense described in \cite[Ch. IV,~ Def.~1.1]{assem}.
    It is a classical result in Auslader--Reiten theory \cite[Ch.~VII,~Lem.~1.2]{ARS}, that irreducible morphisms determine the almost split morphisms. Taking $\cx$ to be the entire module category in \cref{prop: sum irr rad app} reproduces this classical fact.
\end{rem}

We now give an explicit construction of relative radical approximations.
%We now give an explicit construction of relative radical approximations, which
%is a relative version of a well-known result in Auslader--Reiten theory \cite[Ch.~VII,~Lem.~1.2]{ARS}.

%\justin{\cref{lem: cat rad properties}}
%\luis{what does the comment right before this one mean?}

\begin{prop}
    \label{prop: sum irr rad app}
    In the projectivization setup (\cref{definition:projectivization-setup}), let $X \in \cx$. 
    \begin{itemize}
        \item For each $Y\in \cx$, choose a set $\{f_{Y,\ell} \}_{\ell \in L_Y}$ of $\acx$-irreducible morphisms, such that the set of equivalence classes $\left\{\overline{f}_{Y,\ell}\right\}_{\ell \in L_Y}$ is a basis for ${\rad_{\add\cx}(Y,X)}/{\rad_{\add\cx}^2(Y,X)}$ as a module over the division ring ${\Hom_{\blam}(Y,Y)}/{\rad_\blam(Y,Y)}$. 
        \item Define
            $C_\cx(X)\coloneqq \bigoplus_{Y\in \cx} \bigoplus_{\ell \in L_Y} Y $.
        \item Define
            $\rho_\cx(X) \coloneqq \sum_{Y \in \cx} \sum_{\ell \in L_Y} f_{Y,\ell} : C_\cx(X) \to X$.
            %$\rho_Y \coloneqq [\cdots\,f_{Y,\ell} \,\cdots] : \bigoplus_\ell Y \to X$, and define
            %$\radapp_\cx X : C_\cx(X) \to X$ to be the morphism $[\cdots\,\rho_Y \,\cdots] : C_\cx(X) \to X$.
    \end{itemize}
    The morphism $\rho_\cx(X) : C_\cx(X) \to X$ is a minimal $\add\cx$-radical approximation of $X\in \cx$.
\end{prop}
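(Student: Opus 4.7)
The plan is to transfer the problem from $\add{\cx}$ to $\proj\bgam$ via the equivalence $H$ provided by \cref{add G resolution thm}, and then apply \cref{lemma:relative-radical-approximation-presentation} together with the classical description of the second radical layer of an indecomposable projective $\bgam$-module.

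First, by \cref{add G resolution thm}(3), $H$ restricts to an equivalence $\add{\cx} \simeq \proj\bgam$ sending each $X \in \cx$ to an indecomposable projective $HX$. Using \cref{add G resolution thm}(2) and the fullness of $\add{\cx}$ in $\fmod\blam$, one verifies that $H$ induces isomorphisms $\rad_{\add{\cx}}(Y,X) \cong \rad_\bgam(HY, HX)$ and $\rad^2_{\add{\cx}}(Y,X) \cong \rad^2_\bgam(HY, HX)$, compatibly with the division-ring actions $\End_\blam(Y)/\rad_\blam(Y,Y) \cong \End_\bgam(HY)/\rad_\bgam(HY, HY)$ (cf.~\cref{lem: fully faithful preserve minimality}). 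Consequently, the classes $\{\overline{Hf_{Y,\ell}}\}_{\ell \in L_Y}$ form a basis of $\rad_\bgam(HY, HX)/\rad^2_\bgam(HY, HX)$ over this division ring.

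Second, I would invoke the standard description of $\rad(HX)/\rad^2(HX)$ as a semisimple $\bgam$-module: for each $Y \in \cx$, the multiplicity of the simple top $S_Y$ in this module equals $\dim_{D_Y}\! \rad_\bgam(HY, HX)/\rad^2_\bgam(HY, HX) = |L_Y|$, where $D_Y$ denotes the division ring above. Hence the minimal projective cover of $\rad(HX)$ takes the form $\bigoplus_{Y \in \cx} HY^{L_Y}$, and any family of morphisms $HY \to HX$ lifting a basis of $\rad_\bgam(HY, HX)/\rad^2_\bgam(HY, HX)$ over $D_Y$ assembles, via Nakayama's lemma, into such a cover. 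Applied to our chosen representatives, this cover map is precisely $H\rho_\cx(X) = \sum_{Y,\ell} Hf_{Y,\ell}$, so composing with the inclusion $\rad(HX) \hookrightarrow HX$ produces a minimal projective presentation of the relative simple $S_X = HX/\rad(HX)$.

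Finally, \cref{lemma:relative-radical-approximation-presentation} translates this back into the statement that $\rho_\cx(X)$ is a minimal $\add{\cx}$-radical approximation of $X$. The step requiring the most care is the compatibility of the categorical radical on $\add{\cx}$ with the Jacobson radical on the $\bgam$-side, but this is a routine consequence of \cref{add G resolution thm} and the fullness of $\add{\cx}$ in $\fmod\blam$; beyond this, the argument is a direct reformulation of the classical correspondence between irreducible morphisms into an indecomposable projective and the semisimple top of its radical (cf.~\cite[Ch.~V,~Prop.~7.4]{ARS}).
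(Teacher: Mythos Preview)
Your argument is correct, but it takes a different route from the paper. The paper's proof is essentially a citation: it notes that minimal $\add\cx$-radical approximations exist (\cref{cor:approx exist krull}) and then invokes \cite[Ch.~2,~Thm.~1]{chalom2006irreducible} directly for the explicit form. You instead give a self-contained argument internal to the paper's framework: you transfer the problem to $\proj\bgam$ via the equivalence $H$, identify $\rad(HX)/\rad^2(HX)$ with the semisimple module $\bigoplus_Y S_Y^{|L_Y|}$ using the classical correspondence between irreducible morphisms and the radical layers, build the projective cover of $\rad(HX)$ by Nakayama, and then translate back using \cref{lemma:relative-radical-approximation-presentation}. In effect, you are reproving the cited result of Chalom--Marcos in the specific projectivization setup, which has the merit of making the paper more self-contained and of actually exploiting \cref{lemma:relative-radical-approximation-presentation} (which the paper proves but does not use in its own proof of this proposition). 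The paper's approach is shorter but relies on an external reference; yours is longer but stays within the tools already developed.
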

\begin{proof}
    By \cref{cor:approx exist krull}, any module $X\in \cx$ admits a minimal $\add\cx$-radical approximation.
    The statement then follows from \cite[Ch.~2,~Thm.~1]{chalom2006irreducible}.
\end{proof}

\begin{comment}
\begin{defn}
    \label{definition:map-of-spreads}
    Let $S,T \subseteq \cp$ be spreads of a poset $\cp$.
    We say that $T$ \emph{maps nicely to} $S$ if $S\cap T$ is a downset of $T$ and an upset of $S$.
    In that case, we define a morphism $\onemorphism_{T,S} : \K_T \to \K_S$ that is the identity over $S \cap T$ and zero elsewhere.
\end{defn}

It is straightforward to see that the morphism $\onemorphism_{T,S}$ of \cref{definition:map-of-spreads} is well-defined.
\end{comment}

Combining \cref{prop: sum irr rad app} with the description of the spread-irreducible morphisms in \cite{BBH2023} allows us to describe the domain of the minimal spread-radical approximation of spread representations.

\begin{prop}\label{prop: spread radical approx domain}
    Let $\cp$ be a finite poset, let $S \subseteq \cp$ be a spread, and let
    $\rho : C \to \K_S$ be a minimal spread-radical approximation.
    The representation $C$ is isomorphic to the direct sum of spread representations $\K_T$ where $T$ ranges over the following set of spreads:
    \begin{align*}
        \big\{\, V \, & :\, V = S \sqcup (\dsminus{x}{S}) \text{ for some } x \in \cover S \,\big\} \\
        \sqcup\,
        \big\{\, W \, & :\, S = W \sqcup (\usminus{s}{W}) \text{ for some } s\in \cocover W \,\big\}.
    \end{align*}
\end{prop}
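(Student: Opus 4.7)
The plan is to invoke the explicit construction of minimal radical approximations established in \cref{prop: sum irr rad app}, combined with the classification of spread-irreducible morphisms proved in \cite{BBH2023}. Since $\cp$ is finite, \cref{remark:poset-representations-as-modules} identifies $\rep\cp$ with $\fmod\K\cp$, and we place ourselves in the projectivization setup (\cref{definition:projectivization-setup}) by taking $\blam = \K\cp$ and $\cx$ to be the finite set of pairwise non-isomorphic finitely presented spread representations of $\cp$.

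Every spread representation $\K_T$ is a brick, so its endomorphism algebra is isomorphic to $\K$; hence the division ring $\End_\blam(\K_T)/\rad_\blam(\K_T,\K_T)$ appearing in \cref{prop: sum irr rad app} is just $\K$. Applying that proposition to $X = \K_S$ therefore yields an isomorphism
\[
    C \;\cong\; \bigoplus_{T \text{ spread}} (\K_T)^{\oplus d_T}, \qquad
    d_T \;\coloneqq\; \dim_\K \!\bigl(\rad_{\add\cx}(\K_T,\K_S)/\rad^2_{\add\cx}(\K_T,\K_S)\bigr).
\]
By \cref{lem: irr iff rad}, $d_T > 0$ precisely when there exists a spread-irreducible morphism $\K_T \to \K_S$, and the classification from \cite{BBH2023} describes exactly when this happens: either $T$ is a ``cover expansion'' $V = S \sqcup (\ds x \setminus S)$ for some $x \in \cover S$, or $T$ is a ``cocover contraction'' $W$ with $S = W \sqcup (\us s \setminus W)$ for some $s \in \cocover W$. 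In either case the Hom-space $\Hom_\cp(\K_T,\K_S)$ is one-dimensional, spanned by the characteristic morphism supported on $T \cap S$, and this morphism lies in $\rad_{\add\cx}(\K_T,\K_S) \setminus \rad^2_{\add\cx}(\K_T,\K_S)$ by \cref{lem: irr iff rad}; hence $d_T = 1$ for each such $T$.

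Substituting $d_T = 1$ for $T$ in these two families and $d_T = 0$ otherwise gives the claimed decomposition of $C$. Disjointness of the two families is automatic, since spreads of the first type strictly contain $S$ while those of the second type are strictly contained in $S$. The essential content of the proof is thus the input from \cite{BBH2023}; the principal obstacle is purely one of bookkeeping, namely verifying that the parametrization of spread-irreducible morphisms in that reference matches the two families exactly as written (in particular that each pair $(S,x)$ with $x \in \cover S$ gives rise to a well-defined spread $V$, and dually for cocovers), and that the corresponding $\rad/\rad^2$ quotients are indeed one-dimensional so that no higher multiplicities appear in $C$.
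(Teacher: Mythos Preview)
Your approach is essentially the paper's: apply \cref{prop: sum irr rad app} together with the classification of spread-irreducible morphisms from \cite{BBH2023}, and use that the relevant Hom-spaces are one-dimensional to conclude each summand appears with multiplicity one.

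One point to sharpen: the classification in \cite[Prop.~6.8]{BBH2023} does not state the injective case directly in the cocover form you quote. It says instead that $f$ is injective with $S = T \sqcup (\usminus{s}{T})$ for some $s \in \min S$, where $T$ is a connected component of $S \setminus s$. The paper invokes \cref{lem: cocover vs connected} to translate this into the condition $s \in \cocover T$ appearing in the statement; this is precisely the ``bookkeeping'' you flag, and it is not entirely trivial. For the one-dimensionality of $\Hom_\cp(\K_T,\K_S)$ in these cases, the paper cites \cite[Prop.~5.10]{BBH2022} rather than asserting it directly.
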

\begin{proof}
    By \cite[Prop.~6.8]{BBH2023}, a morphism $f:\K_T \to \K_S$ is spread-irreducible if and only if one of the following is true (see \cref{figure: eg spread radapp} for an illustration):
    \begin{enumerate}
        \item The morphism $f$ is surjective, and there exists $x\in\cover{S}$ such that $T = S \sqcup (\dsminus{x}{S})$. In this case, the spread $T$ belongs to the first set in the union.
        
        \item The morphism $f$ is injective and there exists $s\in \minimal S$ such that $S = T \sqcup (\usminus{s}{T})$ and $T$ is a connected component of $S\setminus s$.
        By \cref{lem: cocover vs connected}, this occurs if and only if $s \in \cocover T$ and $T\in \spread \cp$. 
        In this case, the spread $T$ belongs to the second set in the union.
    \end{enumerate}
    By \cref{prop: sum irr rad app},
    the multiplicity of $\K_T$ as a direct summand of $C$ is the dimension of
    %the indecomposable summands of $C$ are in one to one correspondence with a basis of 
    $\rad_{\spread\cp}(\K_T, \K_S) / \rad^2_{\spread\cp}(\K_T,\K_S)$
    over $\hom_\cp(\K_S,\K_S)/\rad_\cp(\K_S,\K_S)$.
    %It is thus enough to show that this basis has cardinality at most one.
    %Note that \cite[Prop.~5.10]{BBH2022} gives a basis a for $\Hom_\cp(\K_U,\K_V)$ whenever $U, V \in \spread\cp$. 
    If $f\in \Hom_\cp(\K_T,\K_S)$ is spread-irreducible, then \cite[Prop.~5.10]{BBH2022} implies that $\Hom_\cp(\K_T,\K_S) \cong \K$.
    This means that $\rad(\K_T, \K_S) / \rad^2(\K_T,\K_S)$ has a basis of size one when $\Hom_\cp(\K_T,\K_S)$ contains a spread-irreducible, and of size zero otherwise, as required.
    %, and the indecomposable summands are pairwise nonisomorphic by \cref{prop: sum irr rad app}.
\end{proof}

\begin{figure}
    \includegraphics[width=\textwidth]{}
    \caption{%
    A spread-radical approximation over a grid poset $[6] \times [4]$.
    The five spreads on the left constitute the summands in the domain of the radical approximation.
    The summands in the top row correspond to surjective spread-irreducibe morphisms, and each one corresponds to a cover of the spread.
    The summands in the bottom row correspond to injective spread-irreducible morphisms,
    and each corresponds to a minimum of the spread.
    The outline of the spread, as well as its minimal elements and covers are reproduced in the domain for comparison.}
    \label{figure: eg spread radapp}
\end{figure}

%\begin{eg}\todo{combine this with \cref{eg: proj spread irr}}\todo{move \cref{eg: proj spread irr} here}
%     Let $S$ be a spread in a finite poset $\cp$.
%     
%     $T^w$ does not need to exists, for example take $S=0$ in the 2x2 grid. There are no injections $\K_T \into \K_S$ with $T\neq S$ a spread.
%     This isn't an issue since the right hand summand of $\radapp_\cp S$ can be 0
%     
%     $T^w$ also does not need to be unique. for example consider $S=\cp$, where $\cp$ is the poset $x \leftarrow w \to y $. Then $\cp = x \cup (\us w \setminus x) = y \cup (\us w \setminus y) $. In this example $C_\cp S = \K_x \oplus \K_y$.
%\end{eg}

We conclude this section by giving two explicit computations of spread-radical approximations, for intuition.
One is in \cref{figure: eg spread radapp}, and the other is the next example.

\begin{eg}
    Let $\cp$ be the poset with underlying set $\{a,b,c\}$, and with order relation generated by $b \leq a$ and $b \leq c$.
    Consider the following four representations of $\cp$
    \[
        \K \xleftarrow{\id_\K} \K \xrightarrow{\id_\K} \K\,,
        \;\;\;
        \K \leftarrow 0 \rightarrow \K\,,
        \;\;\;
        \K \leftarrow 0 \rightarrow 0\,,
        \;\;\;
        0 \leftarrow 0 \rightarrow \K\,,
    \]
    which are 
    isomorphic to $\K_{\us{b}}$,
    $\rad \K_{\us{b}}$,
    $\K_{\us{a}}$,
    and
    $\K_{\us{c}}$, respectively.
    Note that $\rad \K_{\us{b}} \cong \K_{\us{b} \setminus b} \cong \K_{\us{a}} \oplus \K_{\us{b}}$.

    Since $\K_\us{b}$ is projective, every surjection $\K_T \to \K_\us{b}$ admits a section. 
    Therefore, there are no spread-irreducible surjections ending at $\K_\us{b}$, which is consistent with the fact that the spread $\us{b}$ has no covers.
    This accounts for case (1) in
    \cite[Prop.~6.8]{BBH2023} and in the proof of \cref{prop: spread radical approx domain}.

    The spread $\us{b}$ has unique minimum $b$. The spread-irreducible injections ending at $\K_{\us{b}}$
    correspond to the connected components of $\us{b} \setminus b$, and are given by the
    canonical inclusions $\K_{\us{a}} \to \K_{\us{b}}$ and $\K_{\us{c}} \to \K_{\us{b}}$.
    This accounts for case~(2) in
    \cite[Prop.~6.8]{BBH2023} and in the proof of \cref{prop: spread radical approx domain}.
\end{eg}
%\justin{haven't defined succesor}

\section{Stabilization of the spread-global dimension}
\label{section:stabilization-dimension}

\subsection{Spreads in the image of the left Kan extension}

We start by giving sufficient conditions for a spread to be in the image of the left Kan extension along an upper semilattice morphism.
Before doing this, we need a lemma.

\begin{lem}
    \label{lem: floor preserve usminus}
    Let $\psi : \cq \to \cp$ be a full, upper semilattice morphism, with $\cp$ and $\cq$ finite, and $\us{\psi(\cq)} = \cp$.
    If $A, B \subseteq \psi(\cq)$,
    then $\us{\floor{A}_\psi} \setminus \us{\floor{B}_\psi} = \floor{ \us{A} \setminus \us{B} }_\psi$.
\end{lem}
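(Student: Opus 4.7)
The plan is to prove both inclusions directly, exploiting two properties of the adjunction $\psi \dashv \floor{-}_\psi$ established in \cref{lemma:retraction-for-f}: the retraction identity $\floor{\psi(q)}_\psi = q$ for all $q\in\cq$, and the counit inequality $\psi(\floor{p}_\psi)\leq p$ for all $p\in\cp$. I will also repeatedly use the observation that, since $A,B\subseteq\psi(\cq)$ and $\psi$ is full (hence injective), each $a\in A$ has a unique preimage, which coincides with $\floor{a}_\psi$, and satisfies $\psi(\floor{a}_\psi) = a$ on the nose.

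For the inclusion $\us{\floor{A}_\psi}\setminus\us{\floor{B}_\psi} \subseteq \floor{\us{A}\setminus\us{B}}_\psi$, I would take $q$ in the left-hand side and propose $p \coloneqq \psi(q)$ as the witness showing $q\in\floor{\us{A}\setminus\us{B}}_\psi$. Three things need to be checked: (i) $\floor{p}_\psi = q$, which is immediate from the retraction identity; (ii) $p\in\us{A}$, which follows because if $q\geq \floor{a}_\psi$ for some $a\in A$, then applying the monotone map $\psi$ gives $\psi(q)\geq\psi(\floor{a}_\psi) = a$; (iii) $p\notin\us{B}$, which follows because $\psi(q)\geq b = \psi(\floor{b}_\psi)$ would force $q\geq\floor{b}_\psi$ by fullness of $\psi$, contradicting the assumption on $q$.

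For the reverse inclusion $\floor{\us{A}\setminus\us{B}}_\psi \subseteq \us{\floor{A}_\psi}\setminus\us{\floor{B}_\psi}$, I would take $q = \floor{p}_\psi$ with $p\in\us{A}\setminus\us{B}$ and verify both containments. Membership in $\us{\floor{A}_\psi}$ is immediate from monotonicity of $\floor{-}_\psi$ applied to $p\geq a$. For the failure of membership in $\us{\floor{B}_\psi}$, I would argue by contradiction: if $q\geq\floor{b}_\psi$ for some $b\in B$, then applying $\psi$ and using the counit inequality yields $p\geq \psi(\floor{p}_\psi) = \psi(q) \geq \psi(\floor{b}_\psi) = b$, contradicting $p\notin\us{B}$.

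Neither direction is especially hard, but the main subtlety to watch is the interplay between equalities and inequalities produced by the adjunction: the counit $\psi\circ\floor{-}_\psi \leq \id_\cp$ is only an inequality in general, while restricted to $\psi(\cq)$ it is an equality; similarly $\floor{-}_\psi\circ\psi = \id_\cq$ is an equality. The hypothesis $A,B\subseteq \psi(\cq)$ is exactly what is needed to turn the potentially loose counit inequalities into equalities of the form $\psi(\floor{a}_\psi) = a$, and this is where the argument would fail without that hypothesis.
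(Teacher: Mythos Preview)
Your proposal is correct and follows essentially the same approach as the paper's proof: both directions are argued via the same witnesses ($p=\psi(q)$ for one inclusion, and starting from $q=\floor{p}_\psi$ for the other) and the same adjunction identities (retraction $\floor{\psi(-)}_\psi=\id$, counit $\psi(\floor{-}_\psi)\leq\id$, and $\psi(\floor{a}_\psi)=a$ for $a\in\psi(\cq)$). The only cosmetic difference is the order in which the inclusions are treated.
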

\begin{proof}
    For convenience, we denote $\floor{-}_\psi$ by $\floor{-}$.
    We first prove that $\us{\floor{A}} \setminus \us{\floor{B}} \subseteq \floor{ \us{A} \setminus \us{B} }$.
    Let $\floor p \in \floor{\usminus{A}{B}}$.
    Since $p \in \us A$, there exists $a \in A$ with $\floor a \leq \floor p$. So $\floor p \in \us{\floor A}$. 
    Suppose for the purpose of contradiction that $\floor b \leq \floor p$ for some $b\in B$. Since $b \in \psi \cq$ we have $b = \psi \floor b \leq \psi\floor p \leq p$, which contradicts the assumption that $p \not\in \us B$.  

    We now prove the other inclusion.
    Let $q \in \usminus{\floor A}{\floor B}$.
    Then there exists $a\in A \subseteq \psi \cq$ with $a = \psi\floor a \leq \psi q$. Suppose for the purpose of contradiction that $b \leq \psi q$. Then $\floor b \leq \floor{\psi q} = q$ which is a contradiction. Therefore, $\psi q \in \usminus{A}{B}$ and $q =\floor{\psi q} \in \floor{\usminus{A}{B}}$.
\end{proof}

%In the next result, the left Kan extension $\Lan_\psi$ is that guaranteed to exist by \cref{theorem:main-extension-theorem}.
%Moreover, note that, in the setup of the result, the morphism $\floor{-}_\psi$ is defined on $\us{\psi(Q)} \subseteq \cp$, and thus $\floor{S}_\psi$ is well-defined.

\begin{prop}\label{propopsition: every spread hit by floor}
    Let $\psi : \cq \to \cp$ be a full, upper semilattice morphism, with $\cp$ and $\cq$ finite, and $\us{\psi(\cq)} = \cp$.
    %Let $\psi : \cq \to \cp$ be an upper semilattice morphism, with $\cq$ and $\cp$ finite,
    Let $S \subseteq \cp$ be a spread.
    If the image of $\psi$ contains 
    $\min S$ and $\cover S$, then
    \[
    \K_S \cong
        \Lan_\psi \left(\K_{\floor{S}_\psi}\right)\; \in\; \rep\cp \,,
    \]
    so in particular $\K_S$ is in the essential image of $\Lan_\psi : \repq \to \repp$.
\end{prop}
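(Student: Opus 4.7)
The plan is to reduce the claim to a direct application of \cref{lemma:stretching-of-spread-is-spread} by computing $\floor{S}_\psi$ explicitly using the retraction property of $\floor{-}_\psi$ and \cref{lem: floor preserve usminus}. Since $\cp$ is finite, $S$ is a finitely presented convex subset, so by \cref{lemma:description-finitely-presented-spreads} we may write $S = \us{\min S} \setminus \us{\cover S}$, with both antichains finite.

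Set $A \coloneqq \floor{\min S}_\psi$ and $B \coloneqq \floor{\cover S}_\psi$. The assumption $\min S, \cover S \subseteq \psi(\cq)$ together with the fact that $\floor{-}_\psi \circ \psi = \id_\cq$ (\cref{lemma:retraction-for-f}) gives $\psi(A) = \min S$ and $\psi(B) = \cover S$. Applying \cref{lem: floor preserve usminus} with the subsets $\min S$ and $\cover S$ of $\psi(\cq)$ yields
\[
    \floor{S}_\psi \;=\; \floor{\us{\min S} \setminus \us{\cover S}}_\psi \;=\; \us{A} \setminus \us{B}.
\]

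Next, I verify the hypothesis needed to invoke \cref{lemma:stretching-of-spread-is-spread}, namely that every $b \in B$ lies above some $a \in A$. Given $b \in B$, we have $\psi(b) \in \cover S \subseteq \us{\min S}$, so there exists $m \in \min S$ with $m \leq \psi(b)$. Writing $m = \psi(a)$ for some $a \in A$ and using that $\psi$ is full (\cref{section:poset-morphisms}), we conclude $a \leq b$.

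With this, \cref{lemma:stretching-of-spread-is-spread} gives
\[
    \Lan_\psi\left(\K_{\floor{S}_\psi}\right) \;=\; \Lan_\psi\left(\K_{\us{A} \setminus \us{B}}\right) \;\cong\; \K_{\us{\psi(A)} \setminus \us{\psi(B)}} \;=\; \K_{\us{\min S} \setminus \us{\cover S}} \;=\; \K_S,
\]
which is the desired isomorphism. The argument is essentially bookkeeping combining the retraction identity of \cref{lemma:retraction-for-f}, \cref{lem: floor preserve usminus}, and \cref{lemma:stretching-of-spread-is-spread}; the only point requiring a moment of care is the compatibility condition $\min S \leq \cover S$ (in the componentwise sense) which transports cleanly along $\floor{-}_\psi$ precisely because the covers of $S$ lie in $\us{\min S}$.
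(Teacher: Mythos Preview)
Your proof is correct and follows essentially the same approach as the paper's: write $S = \us{\min S}\setminus \us{\cover S}$ via \cref{lemma:description-finitely-presented-spreads}, use the retraction identity to recognize $\min S = \psi(A)$ and $\cover S = \psi(B)$, combine \cref{lem: floor preserve usminus} with \cref{lemma:stretching-of-spread-is-spread}, and read off the result. The only difference is the order in which you invoke these lemmas (you compute $\floor{S}_\psi$ first, then apply \cref{lemma:stretching-of-spread-is-spread}, whereas the paper does the reverse), and you explicitly check the hypothesis ``every $b\in B$ lies above some $a\in A$'' needed for \cref{lemma:stretching-of-spread-is-spread}, which the paper leaves implicit.
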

\begin{proof}
    %Since $S \subseteq \us{\psi(Q)}$, we may assume that $\cp = \us{\psi(Q)}$.
    We compute as follows
    \begin{alignat*}{3}
            \K_S
        &=
            \K_{(\us{\min S})
            \setminus
            (\us{\cover S})}
        && \text{\footnotesize (\cref{lemma:description-finitely-presented-spreads})}
        \\
        &\cong
            \K_{\us{\psi\left(\floor{\min S}_\psi\right)}
            \setminus
            \us{\psi\left(\floor{\cover S}_\psi\right)}}
        && \text{\footnotesize (since $\min S \subseteq \psi(\cq)$ and $\cover S \subseteq \psi(\cq)$)}
        \\
        &\cong
            \Lan_\psi\left(
                \K_{\us{\floor{\min S}_\psi}
                \setminus
                \us{\floor{\cover S}_\psi}}\right)
        && \text{\footnotesize (\cref{lemma:stretching-of-spread-is-spread})}
        \\
        &=
            \Lan_\psi\left(
                \K_{\floor{\us{\min S}
                            \setminus
                           \us{\cover S}}_\psi}\right)
        && \text{\footnotesize (\cref{lem: floor preserve usminus})}
        \\
        &=
            \Lan_\psi\left(
                \K_{\floor{S}_\psi}
            \right)
        && \text{\footnotesize (\cref{lemma:description-finitely-presented-spreads})}
    \end{alignat*}
    \par \vspace{-1.5\baselineskip}
    \qedhere
    \smallskip
    %the result follows from the explicit description of the action of $\Lan_\psi$ on spread representations of \cref{lemma:stretching-of-spread-is-spread}.
    %\luis{Here there's a short argument missing}
\end{proof}

The following notations make the next results easier to state and prove.

\begin{notation}
    \label{notaion:plus-minus}
    We equip any finite grid poset $\cg$ with addition and subtraction operations defined as follows.
    If $\cg = \ct_1 \times \cdots\times \ct_n$ is a finite grid poset,
    we identify each $\ct_i$ with $[m_1]$ for some $m_i \in \N$,
    and define
\begin{align*}
    \pi_i(x+ y) = \min \{ m_i-1,~ \pi_i(x) + \pi_i(y)\}
    \;\;\;\;
    \text{and}
    \;\;\;\;
    \pi_i(x- y) = \max \{ 0,~ \pi_i(x) - \pi_i(y)\}\,,
\end{align*}
for all $x,y \in \cg$, where $\pi_i$ is the $i$th projection (\cref{section:posets-of-interest}).
We also denote by $e_1,\dots , e_n$ the standard basis vectors in $\Z^{n+1}$,
so that, if $B \subseteq \cg$ and $0 \leq i \leq n$, we let 
\begin{align*}
    B + e_i \coloneqq \left\{b+e_i : b \in B \right\} \subseteq \cg
    \;\;\;\;
    \text{and}
    \;\;\;\;
    B - e_i \coloneqq \left\{b-e_i : b \in B \right\} \subseteq \cg\,.
\end{align*}
When we consider a product $\ct \times \cg$ between a totally ordered set $\ct$ and a grid poset $\cg$, which is again a grid poset, we let $\pi_0 : \ct \times \cg \to \ct$ represents the projection onto the factor~$\ct$ (as in \cref{section:posets-of-interest}).
\end{notation}

The next result gives an explicit subgrid satisfying the assumptions of 
\cref{propopsition: every spread hit by floor}.

\begin{lem}
    \label{prop the plus 1 grid}
    Let $\ct$ be a finite total order, let $\cg$ be a finite grid poset, and let $S \subseteq \ct \times \cg$ be a spread.
    If $x$ is a cover of $S$, then $\pi_0(x) \in \pi_0 (\min S \cup (\max S +e_0))$. 
    In particular, the aligned subgrid
    $\pi_0 (\min S \cup (\max S +e_0)) \times \cg$ contains $\min S$ and $\cover S$.
\end{lem}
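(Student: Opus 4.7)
The "in particular" clause is immediate from the first assertion, since $\min S \subseteq \pi_0(\min S) \times \cg$ by definition. For the first assertion, fix $x \in \cover S$ and set $t := \pi_0(x)$; I must produce either $m \in \min S$ with $\pi_0(m) = t$, or $M \in \max S$ with $\pi_0(M + e_0) = t$. The strategy is to look one step down from $x$ in the totally ordered factor $\ct$, exploiting the fact that $x$ is minimal in $\us{S} \setminus S$.

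If $t = 0$, any $s \in S$ with $s \leq x$ automatically satisfies $\pi_0(s) = 0$, and any $m \in \min S$ with $m \leq s$ then satisfies $\pi_0(m) = 0 = t$. If $t > 0$, set $y := x - e_0 < x$; minimality of $x$ in $\us{S} \setminus S$ forces either $y \in S$ or $y \notin \us{S}$. When $y \in S$, I would pick any $M \in \max S$ with $M \geq y$, so $\pi_0(M) \geq t - 1$. If one had $\pi_0(M) \geq t$, then $y \leq x \leq M$ in $\ct \times \cg$, and convexity of $S$ would force $x \in S$, a contradiction; hence $\pi_0(M) = t - 1$, and $\pi_0(M + e_0) = t$ provides the required witness in $\pi_0(\max S + e_0)$.

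When $y \notin \us{S}$, pick $s \in S$ with $s \leq x$. Since $\pi_i(x) = \pi_i(y)$ for all $i \geq 1$, the relation $s \not\leq y$ combined with $s \leq x = y + e_0$ forces $\pi_0(s) = t$. Choose $m \in \min S$ with $m \leq s$; I claim $\pi_0(m) = t$. Otherwise $\pi_0(m) \leq t - 1$, and the auxiliary point $m'$ defined by $\pi_0(m') = t - 1$ and $\pi_i(m') = \pi_i(m)$ for $i \geq 1$ satisfies $m \leq m' \leq s$, hence lies in $S$ by convexity, and moreover $m' \leq y$ coordinatewise, contradicting $y \notin \us{S}$. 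The main obstacle is this last sub-case, where convexity must be carefully leveraged to construct the auxiliary witness $m'$; every other step is a direct appeal either to minimality of $x$ in $\us{S} \setminus S$ or to convexity of $S$.
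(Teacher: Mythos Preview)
Your proof is correct and follows essentially the same strategy as the paper: handle $t=0$ separately, then for $t>0$ examine $y=x-e_0$ and split into cases based on its position relative to $S$. The paper's split is on whether $y\in\ds S$ rather than your $y\in S$ versus $y\notin\us S$, which makes the final sub-case slightly simpler there (one can take $m\in\min S$ with $m\leq x$ directly and argue $m\not\leq y$, avoiding the auxiliary point $m'$), but your version is perfectly valid.
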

%The finite grid $\cp$ is a join semi-lattice with join (least upper bound) given by \[ \pi_i\left(\bigvee A \right)~=~ \max \{\pi_i(a) ~|~ a\in A \}~.\]
\begin{proof}%[Proof of \cref{prop the plus 1 grid}]
    It is clear that the second claim follows from the first.
    Let $A=\min S$, $B= \max S$, and $\cq = \pi_0(A\cup (B+e_0) )\times \cg$.
    %Clearly $A=\min S \subseteq\cq$, so the second claim follows from the first. 
    %Reordering the $\ct_i$, it is sufficient to show $\pi_0(x) \in \pi_0(A \cup (B+e_0))$. 
    There are three cases for a cover $x=(x_0,\dots, x_n)$ of $S$
    (for an illustration of the first and second case, see the points $x$ and $y$ in \cref{figure: spread lemmas}, respectively).
    
    \medskip \noindent \emph{Case 1.}
    Suppose $\pi_0(x) \neq 0$ and $x- e_0 \not\in \ds S$. By the definition of cover, there exists $a=(a_0,\dots,a_n) \in A$ with $a \leq x$.
    If $a \leq x - e_0 \leq x$ then $x - e_0 \in S \subseteq \ds S$, which is a contradiction. 
    Therefore $a \not\leq x - e_0$. 
    Now $ a \leq x$, so $a_0 \leq x_0$ but $a_0 \not\leq x_0 -1$. 
    Then $ x_0 -1 < a_0 \leq x_0$ and $a_0 = x_0 = \pi_0(x)$. Since $\pi_0(x) \in \pi_0( A) $, we have $x\in \cq$.

    \medskip \noindent \emph{Case 2.}
    Suppose $\pi_0(x) \neq 0$ and $x- e_0 \in \ds S =\ds B$, meaning there exists $b\in B$ with
    \begin{equation*}
        x- e_0 = (x_0 -1, x_1,\dots, x_n) \leq (b_0,b_1,\dots, b_n) = b\,.
    \end{equation*}
    Thus $x_0 -1 \leq b_0$. 
    If $x_0 \leq b_0$, then $x \leq b$, which contradicts the fact that $x \not\in \ds S$. 
    Therefore $x_0 -1 \leq b_0 < x_0$ so $b_0 +1 = x_0 = \pi_0(x)$. Since $\pi_0(x) \in \pi_0( B + e_0) $, we have $x\in \cq$.

    \medskip \noindent \emph{Case 3.}
    Suppose $\pi_0(x) =0$. Since $x$ is a cover, there exists an $a\in A$ with $a\leq x$. Then $\pi_0(a) \leq \pi_0(x) =0$ so $\pi_0(x) =\pi_0(a) =0$. Thus $\pi_0(x) \in \pi_0(A)$, and $x \in \cq$.
\end{proof}

The next example shows that, in nontrivial cases, the subgrid constructed in \cref{prop the plus 1 grid} can be the smallest possible that allows one to apply \cref{propopsition: every spread hit by floor}.

\begin{eg}\label{eg: plus 1 grid}
    Consider the spread $S =\usminus{\iota A}{\iota B}$ in the finite grid poset $\cp$ of \cref{figure:example-aligned-grid-inclusion-and-floor}.
    By \cref{lemma:description-finitely-presented-spreads}, we have $\min S = \iota A$ and $\cover S = \iota B$, so $ \usminus{\floor{\iota A}_\iota}{\floor{\iota B}_\iota} = \floor{\usminus{\iota A}{\iota B}}_\iota $, by \cref{lem: floor preserve usminus}.
    The image $\iota \cq = \pi_0(\min S \cup (\max S + e_0))$ has the form described in \cref{prop the plus 1 grid}, so $\K_S \cong \Lan_\iota\left( \K_{\floor{S}_\iota}\right)$, by \cref{propopsition: every spread hit by floor}.
    In this case, the sets $\minimal S$ and $\cover S$ are distinct maximal antichains (in the sense of \cref{lemma: antichain bound}). 
    The subgrid $\iota \cq \subseteq \cp$ is the smallest origin 
    aligned subgrid satisfying the assumptions of \cref{propopsition: every spread hit by floor}.
    %Intuitively, \cref{propopsition: every spread hit by floor} ensures $S$ has the same "shape" as $\floor S$ with .
\end{eg}

\begin{figure}
    \includegraphics[width=.6\linewidth]{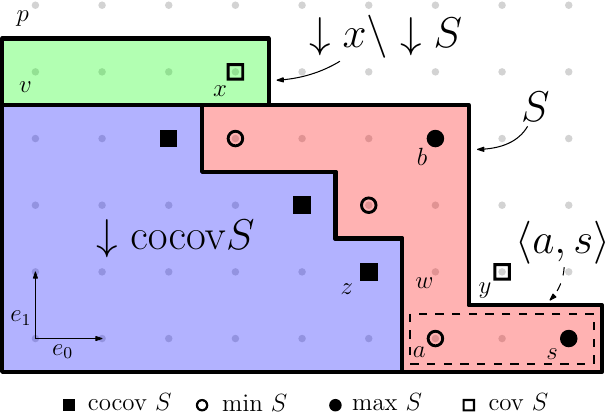}
    \caption{
    The spread $S$ in $\cp =[9]\times [7]$ serves as an example throughout \cref{section:stabilization-dimension,section:technical-results-spreads}.
    We also consider the spreads $V \coloneqq S \sqcup (\dsminus{x}{S})$ and $W \coloneqq S \setminus \iprod{a}{s}$, corresponding to cases (1) and (2) of \cref{prop: spread radical approx domain}, respectively.}
    \label{figure: spread lemmas}
\end{figure}

We conclude by constructing an explicit subgrid that allows us to apply \cref{propopsition: every spread hit by floor} to hit all spreads in a fixed spread-radical approximation (see the proof of \cref{theorem:stabilization}).

\begin{prop}\label{prop rad approx hit by floor}
    Let $\ct$ be a finite total order, let $\cg$ be a finite grid poset,
    and let $S = \iprod{A}{B} \subseteq \ct \times \cg$ be a spread.
    The following subgrid
    contains all the minimal elements and the covers of $S$,
    as well as the minimal elements and the covers of all spreads $T$ such that $\K_T$ is a direct summand of the domain of a minimal spread-radical approximation of $\K_S$:
    \[
        \pi_0\big( 0 \cup A\cup (A+ e_0) \cup (B+ e_0) \cup (B+2  e_0) \big) \times \cg \; \subseteq \; \ct \times \cg\,.
    \]
\end{prop}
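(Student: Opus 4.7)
The plan is to handle one family of spreads at a time: the spread $S$ itself, the spreads $V = S \sqcup (\dsminus{x}{S})$ for $x \in \cover S$, and the spreads $W$ with $S = W \sqcup (\usminus{s}{W})$ for some $s \in \cocover W$; the last two families are those described in \cref{prop: spread radical approx domain}. For each such spread $T$, the strategy is to apply \cref{prop the plus 1 grid}, which reduces the task to showing that both $\pi_0(\min T)$ and $\pi_0(\max T + e_0)$ are contained in $\pi_0\bigl(0 \cup A \cup (A+e_0) \cup (B+e_0) \cup (B+2e_0)\bigr)$. The case $T = S$ is immediate, since $\pi_0(\min S) = \pi_0(A)$ and $\pi_0(\max S + e_0) = \pi_0(B + e_0)$.

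For each $V = S \sqcup (\dsminus{x}{S})$, the first step will be to observe that $V \supseteq \ds x$ always contains the global minimum $0$ of $\ct \times \cg$, so $\min V = \{0\}$. A direct verification using the convexity of $S$ and $x \notin S$ shows $\max V = \{x\} \cup (B \setminus \ds x)$. Since $x \in \cover S$, applying \cref{prop the plus 1 grid} to $S$ itself yields $\pi_0(x) \in \pi_0(A \cup (B+e_0))$, whence $\pi_0(x + e_0) \in \pi_0((A+e_0) \cup (B+2e_0))$; together with $\pi_0(b + e_0) \in \pi_0(B+e_0)$ for the remaining maximal elements, this finishes the $V$ case.

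The hardest case, where the main obstacle lies, is the family of spreads $W$. Here I will use, from the proof of \cref{prop: spread radical approx domain}, that $s \in \min S$, that $W$ is a connected component of $S \setminus s$, and that the equation $S = W \sqcup (\usminus{s}{W})$ forces $\us s \subseteq S$. To bound $\pi_0(\min W)$: for $w \in \min W$, if some $a \in A \setminus \{s\}$ satisfies $a \leq w$, then $a$ and $w$ are comparable inside $S \setminus s$ and hence lie in the same connected component, so $a \in W$ and the minimality of $w$ gives $w = a \in A$; otherwise only $s \in A$ lies below $w$, and convexity together with $\us s \subseteq S$ and the minimality of $w$ force $w$ to be a cover of $s$ in $\cp$, i.e. $w = s + e_i$ for some $i$. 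In either case $\pi_0(\min W) \subseteq \pi_0(A \cup (A + e_0))$. To bound $\pi_0(\max W + e_0)$: for $w \in \max W$ with a witness $b \in B$ above it, either $b \in W$ (so $w = b \in B$) or $b \notin W$, which forces $b \in \us s$ and leaves the delicate task of showing $\pi_0(w) = \pi_0(b)$. If instead $\pi_0(w) + 1 \leq \pi_0(b)$, then convexity of $S$ places $w + e_0 \in [w, b] \subseteq S$; the equation $w + e_0 = s$ is ruled out because it would force $w < s$, contradicting $s \in \min S$, so $w + e_0 \in S \setminus s$ lies in the same connected component as $w$, yielding $w + e_0 \in W$ and contradicting the maximality of $w$. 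Hence $\pi_0(\max W + e_0) \subseteq \pi_0(B + e_0)$, and \cref{prop the plus 1 grid} concludes. Eliminating this off-by-one possibility is the main obstacle, as it requires simultaneously combining the convexity of $S$, the minimality of $s$, and the connectivity of $W$ inside $S \setminus s$.
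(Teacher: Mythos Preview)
Your treatment of the family $V = S \sqcup (\dsminus{x}{S})$ contains a genuine error. The inclusion $V \supseteq \ds x$ is false: by definition $\dsminus{x}{S} = \ds x \setminus \ds S$, so $V = S \cup (\ds x \setminus \ds S)$, and any element of $(\ds x \cap \ds S)\setminus S$ --- for instance any $p$ with $p < a$ for some $a \in \min S$ and $p \leq x$ --- lies in $\ds x$ but not in $V$. Hence $\min V = \{0\}$ only when $V$ happens to be a downset; in general $\min V$ can contain elements of $A$ and also new minima arising from $\dsminus{x}{S}$ (the paper's \cref{figure: spread lemmas} shows exactly such a $v \in \min V$ with $v \notin \{0\} \cup A$). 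Since your bound on $\pi_0(\cover V)$ via \cref{prop the plus 1 grid} also passes through $\pi_0(\min V)$, this gap breaks both the $\min V \subseteq \cq$ and the $\cover V \subseteq \cq$ conclusions. The paper fills this gap with a genuinely different idea: it shows $\cocover V = \cocover S$ (\cref{lemma: cocover of spread minus cover}), observes $\min V \subseteq \cover(\ds{\cocover V})$ when $V$ is not a downset (\cref{lem: min spread implies cover ds}), and then applies the dual of \cref{prop the plus 1 grid} to $S$ to control $\pi_0(\cocover S)$. This is where the term $0$ and part of the term $B+e_0$ in the subgrid are actually used.

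A smaller point about the $W$ case: the claim that $S = W \sqcup (\usminus{s}{W})$ forces $\us s \subseteq S$ is also false (take $S = \{1,2\}$ in $[4]$, $s = 1$, $W = \{2\}$; then $3 \in \us s \setminus S$). Fortunately your argument for $\min W$ does not actually need it: for $s < p < w$ one has $p \in [s,w] \subseteq S$ by convexity alone, and then $p \in W$ by connectivity, contradicting minimality of $w$. Your direct argument that $\pi_0(\max W) \subseteq \pi_0(B)$ is correct and gives an alternative to the paper's route, which instead invokes the dual of \cref{lemma: cocover of spread minus cover} to get $\cover W = \cover S$ immediately.
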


\begin{proof}
    Let $\cq$ be the subgrid from the statement. \cref{prop the plus 1 grid} shows that $\cq$ contains the minimal elements and covers of $S$. 
    \cref{prop: spread radical approx domain} describes the summands $\K_T$ of the radical approximation. In particular, the second claim reduces to verifying that $\cq$ contains the following:
    \begin{enumerate}
        \item The minimal elements and the covers of $\iprod{A}{B} \sqcup (\dsminus{x}{B})$, for every $x \in \cover{S}$.
    \item The minimal elements and the covers of every spread $W \subseteq \cp$,
    for which there exists $a \in A$ with $\iprod{A}{B} = W \sqcup (\usminus{a}{W})$ and with $a \in \cocover{W}$.
    \end{enumerate}

    We first check (1).
    The maximal elements of $V= \iprod{A}{B} \sqcup (\dsminus{x}{B})$ are contained in $x \cup B$
    (e.g., consider the maxima of $V = S\sqcup(\usminus{x}{S})$ in \cref{figure: spread lemmas}).
    By \cref{prop the plus 1 grid}, the covers of $V$ are contained in the poset
    $\cq_V =\pi_0 \big( A \cup (B \cup x) +e_0 \big) \times \cg$,
    and $\pi_0(x) \in \pi_0( A \cup (B+ e_0))$.
    Thus,
    \[ \cover{V}\subseteq \cq_V \subseteq \pi_0\big( A \cup (B+ e_0) \cup (A +e_0)\cup (B+ 2e_0) \big) \times \cg = \cq\,.\]
    (e.g., in \cref{figure: spread lemmas}, the covers of $V $ are $p$, $y$, and $x+e_0$, and they satisfy $\pi_0(p)=0$, $\pi_0(y)\in \pi_0(B+e_0)$, and $\pi_0(x+e_0) \in \pi_0(A+e_0)$).

    Next we show $\min V \subseteq \cq$.
    If $V$ is a downset, then $\min V = 0$ is contained in $\cq$.
    Suppose now that $V$ is not a downset, so that, equivalently, we have $\cocover V\neq \emptyset$, by the definition of cocover.
    Then, the minimum $0 \in \ct \times \cg$ belongs to $\ds{\cocover{V}}$, and thus
    $\minimal V \subseteq \cover{(\ds{\cocover V})}$, by \cref{lem: min spread implies cover ds}.
    It is therefore sufficient to show that $\cq$ contains the covers of the spread $Z:= \ds{\cocover V}$.
    (e.g., in \cref{figure: spread lemmas}, we have $\min V = v\sqcup \min S$, and
    the minimum $v$ is a cover of $\ds{\cocover V}$).
    Now, \cref{prop the plus 1 grid} implies that
    \[
        \pi_0(\minimal V) \subseteq \pi_0(\cover{Z}) \subseteq \pi_0( \min{Z} \cup (\max{Z}+e_0))\,.\tag{$\ast$}
    \]
    By \cref{lemma:description-finitely-presented-spreads}, the set $Z$ is the downset of an antichain. Therefore, $\min Z =0$ and by \cref{lemma: cocover of spread minus cover} $\max Z={\cocover V}={\cocover S}$.

    %By \cref{lemma: cocover of spread minus cover}, we have $\ds{\cocover T} = \ds{\cocover S} $, which is a spread with maxima $\cocover S$ and unique minimum $0$.
    The addition and subtraction operations of \cref{notaion:plus-minus} are duals of each other (in the sense of \cref{remark:poset-duality}).
    %Furthermore, there is a dual version of \cref{prop the plus 1 grid} for cocovers. 
    The dual of \cref{prop the plus 1 grid} shows that
    $\pi_0(\max Z) =\pi_0(\cocover S) \subseteq \pi_0( (A -e_0) \cup B)$.
    Combining this with ($\ast$), we have
    \[
        \pi_0(\minimal V) \subseteq \pi_0( \min{Z} \cup (\max{Z}+e_0)) \subseteq \pi_0( 0\cup A \cup (B+e_0))\, .
    \]
    %(e.g., consider the minimum $v\in \min V$ in \cref{figure: spread lemmas}, which is a minimal element of $V$ such that $\pi_0(v)=0$).
    It follows that $ \min V \subseteq \pi_0( 0\cup A \cup (B+e_0))\times \cg \subseteq\cq$.
    
    \smallskip

    We now check (2).
    The dual of \cref{lemma: cocover of spread minus cover} implies that $\cover{S} = \cover{W}$, and we have already shown that $\cover{S}\subseteq \cq$, by \cref{prop the plus 1 grid}. 
    (e.g., if $W:= S\setminus \iprod{a}{s}$ in \cref{figure: spread lemmas}, then $\cover W = \{x,y\} = \cover S$). 

    It remains to show $\min W \subseteq \cq$.
    By \cref{lem: cocover vs connected}, we have that $W$ is a connected component of $\iprod{A}{B}\setminus a$. 
    Therefore, if $w \in \min{W}$, then either $w\in A$ or $w=a+e_i$ for some $0\leq i \leq n$ (e.g., in \cref{figure: spread lemmas}, we have $w=a+e_1 \in \min W$). 
    We thus get that $\pi_0(\min{W})\times \cg \subseteq \pi_0\big( A\cup (A+ e_0) \big) \times \cg$, so $\cq$ contains the minimal elements of $W$. 
\end{proof}

\subsection{Proof of \cref{theorem:stabilization} and consequences}

Before we give the proof of \cref{theorem:stabilization},
we require a standard result.

\begin{lem}\label{lemma: antichain bound}
    If $\cq$ is a finite poset, and $\ct$ is a finite total order,
    then the largest antichain in $\ct \times \cq$ has cardinality at most $|\cq|$.
\end{lem}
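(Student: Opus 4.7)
The plan is to use the projection onto $\cq$ as the main tool. Specifically, let $\pi_\cq : \ct \times \cq \to \cq$ denote the projection, and let $A \subseteq \ct \times \cq$ be an antichain. I would show that $\pi_\cq$ restricted to $A$ is injective, which immediately gives $|A| \leq |\cq|$.

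To verify injectivity on $A$, suppose toward a contradiction that $(t,q), (t',q') \in A$ are distinct elements with $\pi_\cq(t,q) = \pi_\cq(t',q')$, i.e.\ $q = q'$. Since $\ct$ is totally ordered, either $t \leq t'$ or $t' \leq t$, so by the definition of the product order, either $(t,q) \leq (t',q')$ or $(t',q') \leq (t,q)$. Either way, the two elements are comparable, contradicting the antichain hypothesis.

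There is essentially no obstacle here; the statement follows immediately from the definition of the product order and the totality of $\ct$. The only subtlety worth noting is that totality of $\ct$ is used in an essential way: the analogous statement with $\ct$ replaced by an arbitrary poset would fail, since one could take two incomparable elements of $\ct$ paired with the same element of $\cq$.
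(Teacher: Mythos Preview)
Your proposal is correct and is essentially identical to the paper's own proof: both show that the projection $\pi_\cq$ is injective on any antichain, using totality of $\ct$ to force comparability whenever two elements share the same $\cq$-coordinate.
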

\begin{proof}
    Consider the projections $\pi_\ct : \ct \times \cq \to \ct$ and $\pi_\cq : \ct \times \cq \to \cq$.
    If $A \subseteq \ct \times \cq$ and the restriction $\pi_\cq|_A : A \to \cq$ is not injective, then there exist $a \neq b \in A$ with $\pi_\cq(a) = \pi_\cq(b)$ and $\pi_\ct(a) \leq \pi_\ct(b)$ (since $\ct$ is a total order), and thus $A$ contains two distinct comparable elements.
    This implies that, if $A$ is an antichain, then 
    $\pi_\cq|_A$ is injective, and we have $|A| \leq |\cq|$.
\end{proof}

\stabilization*
\begin{proof}
    Let us start by proving that the second statement implies the first.
    Any finite poset~$\cq$ admits a full embedding into a finite grid poset $\cg$ (indeed, into $\cg = \{0 < 1\}^\cq = [2]^{\cq}$, by mapping $q \in \cq$ to $\{x \in \cq : x \leq q\}$).
    This, in turn, induces a full embedding of $\ct \times \cq$ into $\ct \times \cg$, so that
    \[
        \gldim{\spread}{\ct \times \cq} \leq
        \gldim{\spread}{\ct \times \cg} \leq n_\cg = \gldim{\spread}{[k] \times \cq},
    \]
    where the first inequality is due to the monotonicity of the spread-global dimension \cite[Thm.~1.2]{aoki-escolar-tada},
    the second inequality is by definition of $n_\cg$, 
    and the equality is by hypothesis.
    The global dimension $\gldim{\spread}{[k] \times \cq}$ is finite \cite[Prop.~4.5]{AENY23} and independent of $\ct$, so $n_\cq$ is finite.

    \medskip

    To prove the second statement, it is enough to prove that
    \[
        \gldim{\spread}{[\ell] \times \cg} \leq \gldim{\spread}{[k] \times \cg}
    \]
    for every $\ell \geq 1 \in \N$, where $k = 1 + 4 \cdot |\cg|$.
    If $\ell \leq k$, this follows from the monotonicity of the spread-global dimension \cite[Thm.~1.2]{aoki-escolar-tada}, so let us assume that $\ell > k$.

    We use \cref{thm: big small radapp} with $\Delta = \K ([k] \times \cg)$ and $\blam = \K([\ell] \times \cg)$ poset algebras so that $\fmod \Delta \simeq \rep ([k] \times \cg)$ and
    $\fmod \blam \simeq \rep ([\ell] \times \cg)$ (\cref{remark:poset-representations-as-modules}),
    and with $\cy$ being of the spread representations of $[k] \times \cg$ and $\cx$ being the spread representations of $[\ell] \times \cg$.
    If the hypothesis of \cref{thm: big small radapp} hold, then we have $\gldim{\spread}{[\ell] \times \cg} = \gldim{\spread}{[k] \times \cg}$, since in this case, both $\add \cx$ and $\add \cy$ contain the corresponding injective objects, by \cref{lemma:proj-is-spread}.

    It is thus enough to construct a finite set of additive functors
    $\rep([k] \times \cg) \to \rep([\ell] \times \cg)$
    satisfying the following conditions:
    \begin{enumerate}
        \item All of the functors are exact and fully faithful.
        \item All of the functors map spread-approximations to spread-approximations.
        \item All of the functors map spread-decomposable representations to spread-decomposable representations.
        \item For every spread $S\subseteq [\ell] \times \cg$ there exists one of the functors whose essential image contains $\K_S$ and the domain of a minimal spread-radical approximation of $\K_S$.
    \end{enumerate}
    As set of functors, we choose
    \[
        \left\{\,\Lan_\iota :  \text{$\iota$ is an origin aligned grid inclusion $[k] \times \cg \hookrightarrow [\ell] \times \cg$}\,\right\},
    \]
    where the left Kan extensions are guaranteed to exist by \cref{theorem:main-extension-theorem} (or \cref{prop: fus spread exact}), using the fact that any aligned grid inclusion is an upper semilattice morphism (\cref{lemma:aligned-grid-inclusion-semilattice-morphism}).
    This set of functors satisfies conditions (1) and (2), by \cref{theorem:main-extension-theorem}, as well as condition~(3), by \cref{lemma:stretching-of-spread-is-spread}.

    To conclude, we must show that this set of functors also satisfies condition (4).
    That is, we must show that, for every spread $S \subseteq [k] \times \cg$, there exists an aligned grid inclusion $\iota : [\ell] \times \cg \to [k] \times \cg$ such that the essential image of $\Lan_\iota$ contains $\K_S$ and the domain of a minimal spread-radical approximation of $\K_S$.
    Let $S = \iprod{A}{B}$ with $A$ and $B$ antichains,
    define $E = 0 \cup A\cup (A+ e_0) \cup (B+ e_0) \cup (B+2  e_0)$
    and consider the subgrid
    \[
        \pi_0(E) \times \cg \subseteq [k] \times \cg.
    \]
    By \cref{prop rad approx hit by floor,propopsition: every spread hit by floor},
    it is enough to construct an aligned grid inclusion
    $\iota : [k] \times \cg \to [\ell] \times \cg$ such that $\pi_0(E) \times \cg  \subseteq \iota([\ell] \times \cg)$.
    In order to construct this,
    it is enough to produce a morphism $[k] \to [\ell]$ with image containing 
    $\pi_0( E)$.
    This morphism exists as long as 
    $|E| \leq k$, so let us check this.

    Since $A$ and $B$ are antichains, we have
    $|A| \leq |\cg|$ and $|B| \leq |\cg|$, by \cref{lemma: antichain bound}.
    We can then bound $|E| = |0 \cup A\cup (A+ e_0) \cup (B+ e_0) \cup (B+2  e_0)| \leq 1 + 2 |A| + 2 |B| \leq 1+ 4 \cdot |\cg| = k$,
    as required.
\end{proof}

\begin{proof}[Proof of \cref{corollary:conjecture-1}]
    Let $g_2(k) \coloneqq \gldim{\spread}{ [k] \times [2] }$.
    By \cref{theorem:stabilization}, we have $g_2(k) \leq g_2(1 + 4 \cdot 2) = g_2(9)$, for every $k \geq 1 \in \N$.
    So, by the monotonicity of the spread-global dimension \cite[Thm.~1.2]{aoki-escolar-tada}, we get that $g_2$ is constant for $k \geq 9$.
    The result then follows from the fact that $g_2(4) = \dots = g_2(9) = 2$,
    by \cite[Ex.~4.10]{AENY23}.
\end{proof}

\begin{proof}[Proof of \cref{corollary:conjecture-2}]
    By \cref{theorem:stabilization}, we have $g_m(k) \leq g_m(1 + 4 m)$ for every $k \geq 1 \in \N$.
    The result then follows from the monotonicity of the spread-global dimension \cite[Thm.~1.2]{aoki-escolar-tada}.
\end{proof}
    
\begin{proof}[Proof of \cref{corollary:R-times-lattice}]
    If $M \in \rep(\ct \times \cg)$, then, by \cref{lemma:restriction-to-finite-lattice},
    there exists a finite total order $\ct' \subseteq \ct$ and a representation $M' \in \rep(\ct' \times \ct)$, such that 
    $M \cong \Lan_\iota M'$, where $\iota : \ct' \times \cg \to \ct \times \cg$ is the corresponding aligned grid inclusion. 
    Take a minimal spread-resolution of $M'$, which has length at most 
    $n_\cq < \infty$, by \cref{theorem:stabilization}.
    By \cref{theorem:main-extension-theorem},
    applying $\Lan_\iota$ to this spread-resolution we obtain a spread-resolution of $M$ of length at most $n_\cq$, as required.
\end{proof}

%%%%%%%%%%%%%%%%%
\appendix

\section{Technical results about spreads}
\label{section:technical-results-spreads}

\begin{defn}
    An upset $U \subseteq \cp$ of a poset $\cp$ is \emph{finitely generated}
    if there exists a finite set $A$ with $\us{A} = U$.
\end{defn}

\begin{lem}
    \label{lemma:finitely-generated-upset}
    If $\cp$ be a poset and $U \subseteq \cp$ is a finitely generated upset, then $\min U$ is finite and $U = \us{\min U}$.
\end{lem}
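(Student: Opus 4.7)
The plan is to show that every element of $\min U$ must lie in the finite generating set $A$, and then to use finiteness of $A$ to produce a minimal element below any given element of $U$.

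First, I would observe that $\min U \subseteq A$. Indeed, if $m \in \min U$, then $m \in U = \us{A}$, so there exists $a \in A$ with $a \leq m$. Since $A \subseteq \us{A} = U$, we have $a \in U$, and minimality of $m$ in $U$ forces $a = m$, so $m \in A$. This immediately gives that $\min U$ is finite, since $A$ is.

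Next, I would prove $U = \us{\min U}$. The inclusion $\us{\min U} \subseteq U$ is immediate since $\min U \subseteq U$ and $U$ is an upset. For the reverse inclusion, let $u \in U$. The set $A_u := \{a \in A : a \leq u\}$ is finite (as a subset of $A$) and non-empty (since $u \in \us{A}$). Pick any element $m$ that is minimal in $A_u$ with respect to the order inherited from $\cp$; this exists because $A_u$ is a finite non-empty poset. I claim $m \in \min U$. Clearly $m \in A \subseteq U$. Suppose $v \in U$ with $v \leq m$. Then $v \in \us{A}$, so there exists $a' \in A$ with $a' \leq v \leq m \leq u$. Hence $a' \in A_u$ and $a' \leq m$, so by minimality of $m$ in $A_u$ we get $a' = m$; combined with $a' \leq v \leq m$ this forces $v = m$. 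Thus $m \in \min U$, and since $m \leq u$ we conclude $u \in \us{\min U}$.

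This argument is essentially routine; the only subtlety is that one must select the minimal element inside the finite set $A_u$ rather than attempting to descend inside $U$ (which is typically infinite, so no descending chain condition is available a priori). No deep obstacle is expected.
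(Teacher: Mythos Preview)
Your proof is correct and follows essentially the same approach as the paper's: first show $\min U \subseteq A$ by minimality, then for $u \in U$ pick a minimal element of $\{a \in A : a \leq u\}$ and argue it lies in $\min U$. Your version is slightly more explicit in naming the set $A_u$ and spelling out the final chain $a' \leq v \leq m$, but the argument is the same.
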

\begin{proof}
    Suppose $\us{A} = U$ with $A$ finite. 
    If $u \in \min U$ then there exists $a\in A \subseteq U$ with $a\leq u$, and thus $a=u$, by minimality.
    This implies that $\min U \subseteq A$, so $\min U$ is finite. 
    Since $\us{\min U} \subseteq U$, to conclude it is enough to show that $U \subseteq \us{\min U}$.
    Given $u \in U$, there exists $a \in A$ with $a \leq u$.
    Since $A$ is finite, we may choose $a$ to be a minimal element of $A$ with this property.
    If $a$ is a minimal element of $U$, then $a \in \min U$, and we are done, so let us show that $a$ must be a minimal element of $U$.
    If this were not the case, then there would exist $U \ni u' < a$, but then $u' \notin \us{A}$, by the minimality of $a$, which is a contradiction.
\end{proof}

\begin{lem}
    \label{lemma:finitely-presented-implies-finitely-generated-upsets}
    If $\cp$ is a poset and $S \subseteq \cp$ a finitely presented convex set, then $\us{S}$ and $\us{S} \setminus S$ are finitely generated upsets.
\end{lem}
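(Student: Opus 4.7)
The plan is to exploit a finite presentation $M_1 \xrightarrow{g} M_0 \xrightarrow{f} \K_S \to 0$ of $\K_S$, with $M_0 = \bigoplus_{j=1}^n \K_{\us{a_j}}$ and $M_1 = \bigoplus_{i=1}^m \K_{\us{b_i}}$. A morphism $\K_{\us{a_j}} \to \K_S$ is determined by its value at $a_j$, which lies in $\K_S(a_j)$ and so is forced to vanish whenever $a_j \notin S$; dropping such summands, I may assume every $a_j \in S$, and then the $j$-th component of $f$ is a nonzero scalar $\lambda_j \in \K$ times the natural morphism $\K_{\us{a_j}} \to \K_S$. Surjectivity of $f$ at each $s \in S$ then forces the existence of some $a_j \leq s$, and combining this with $a_j \in S$ yields $\us S = \us\{a_1, \ldots, a_n\}$, proving the first half of the lemma.

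For the second half, I would first note that $\us S \setminus S$ is itself an upset: given $p \in \us S \setminus S$ with $p \leq q$, and any $s \in S$ with $s \leq p$, convexity of $S$ applied to $s \leq p \leq q$ would force $p \in S$ if we had $q \in S$, a contradiction. I would also record that the natural morphism $\pi : \K_{\us S} \to \K_S$, defined to be the identity at points of $S$ and zero on $\us S \setminus S$, is a well-defined surjection whose kernel is precisely $\K_{\us S \setminus S}$.

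The crux is to factor $f$ through $\pi$. Define $\sigma : M_0 \to \K_{\us S}$ at each $q \in \us S$ by $(c_j)_{a_j \leq q} \mapsto \sum_{a_j \leq q} \lambda_j c_j$, re-using the scalars of $f$; this is surjective because every $\lambda_j$ is nonzero, and by construction $f = \pi \circ \sigma$. Writing $K \coloneqq \ker f$, surjectivity of $\sigma$ gives $\sigma(K) = \sigma(\sigma^{-1}(\ker \pi)) = \ker \pi \cong \K_{\us S \setminus S}$, and since $K$ coincides with the image of $g$, the composition $\sigma \circ g : M_1 \twoheadrightarrow \K_{\us S \setminus S}$ is a surjection from a finitely generated representation. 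Concretely, the image of the standard generator $1 \in \K_{\us{b_i}}(b_i)$ is a scalar in $\K_{\us S \setminus S}(b_i)$ that vanishes unless $b_i \in \us S \setminus S$; the $b_i$ for which this scalar is nonzero form a finite subset of $\us S \setminus S$ which, by surjectivity of $\sigma \circ g$, generates it as an upset.

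The main subtlety is in the design of $\sigma$: one must re-use the scalars $\lambda_j$ appearing in $f$ (rather than taking the naive sum-of-coordinates map) so that the identity $f = \pi \circ \sigma$ holds on the nose, and the preliminary reduction to $\lambda_j \neq 0$ for every $j$ is precisely what makes $\sigma$ surjective. The remaining verifications are routine.
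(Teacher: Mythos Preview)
Your proof is correct and follows essentially the same strategy as the paper's: extract finite generating sets for $\us S$ and for $\us S \setminus S$ from a projective presentation of $\K_S$. The paper invokes a \emph{minimal} presentation so that the generator and relation supports directly furnish the required generating sets, whereas you work with an arbitrary presentation, first normalize so that every $a_j \in S$ with $\lambda_j \neq 0$, and then explicitly produce a surjection $M_1 \twoheadrightarrow \K_{\us S \setminus S}$ via the factorization $f = \pi \circ \sigma$. Your route is a little longer but more self-contained---it sidesteps any appeal to the existence of minimal projective presentations over an arbitrary poset, and it makes the second half (handled in the paper by a terse ``similarly'') fully explicit.
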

\begin{proof}
    Since $\K_S \in \rep \cp$ is finitely presented, there are functions
    $\beta_0 : \cp \to \N$ and $\beta_1 : \cp \to \N$ of finite support, and a minimal projective presentation
    \[
        \bigoplus_{j \in \cp} \left(\K_{\us{j}}\right)^{\beta_1(j)}
        \to
        \bigoplus_{i \in \cp} \left(\K_{\us{j}}\right)^{\beta_0(i)}
        \to \K_S\,.
    \]
    The second morphism is surjective, so that $S \subseteq \us{\supp(\beta_0)}$.
    The second morphism is, moreover, a projective cover,
    so that $\supp(\beta_0) \subseteq S$.
    It follows that $\us{\supp(\beta_0)} = \us{S}$.
    The statement for $\us{S}$ then follows from \cref{lemma:finitely-generated-upset}
    by taking $A = \supp(\beta_0)$.

    Similarly, we must have $\us{S} \setminus S = \us{\supp(\beta_1)}$,
    so the statement for $\us{S} \setminus S$ also follows from \cref{lemma:finitely-generated-upset}
    by taking $A = \supp(\beta_1)$.
\end{proof}

\begin{proof}[Proof of \cref{lemma:description-finitely-presented-spreads}]
    Taking $U = \us{S}$ in \cref{lemma:finitely-generated-upset}, we get that $\min{\us{S}}$ (which also equals $\min{S}$) is finite, and that $\us{S} = \us{\min{(\us{S})}} = \us\min S$.
    Taking $U = \us{S} \setminus S$ in
    \cref{lemma:finitely-generated-upset}, we get that $\min(\us{S} \setminus S) = \cover(S)$ is finite, and that $\us{S} \setminus S = \us{\min(\us{S} \setminus S)} = \us{\cover{S}}$.
\end{proof}

\begin{proof}[Proof of \cref{cor: cover iff min upminus}]
    Since $S$ is convex, we have $S = \ds S \cap \us S$, and
\begin{align*}
    \cp& = S \sqcup (\us S \setminus S) \sqcup (\ds S \setminus S) \sqcup \big(\cp \setminus(\ds S \cup \us S )\big)\,.
\end{align*}
Note that $\cp \setminus(\ds S \cup \us S ) = \incomp S$, by definition.
We get $\us S \setminus S = \us{\cover S}$ by using 
\cref{lemma:finitely-generated-upset,lemma:finitely-presented-implies-finitely-generated-upsets}.
Dually, we also get $\ds S \setminus S = \ds{\cocover S}$, as required.
%\luis{Finish this proof \cref{lemma:description-finitely-presented-spreads}.
%}
%\luis{What was here:}
%    Since $S$ is convex, it is a downset in $\us S$.
%    Thus, $\us S \setminus S$ is an upset. If $\cp$ is finite, then $\minimal(\us S \setminus S)= \cover{S}$ by \cref{lemma:description-finitely-presented-spreads}.
%    Thus, we have $\us S = S \sqcup \us{\cover S} $, and, by duality, also $\ds S = S \sqcup \ds{\cocover S}$.
%    To conclude, we substitue into
%    the equality above.
\end{proof}

In the next results, we denote a spread $W$ and $V$ (rather than $S$) in order to match the notation in the proof of \cref{prop rad approx hit by floor}, where these results are used.
The notation $V$ corresponds to case (1) of \cref{prop: spread radical approx domain}, while the notation $W$ corresponds to case (2).

For an illustration of the next result, let $S$ and $T = S \sqcup (\dsminus{x}{S})$ be as in \cref{figure: spread lemmas}.

\begin{lem}\label{lemma: cocover of spread minus cover}
    Let $\cp$ be a finite poset, and let $V \subseteq \cp$ be a spread.
    If there exists $x \in \cover S$ such that $V = S \sqcup (\dsminus{x}{S})$,
    then $\cocover V = \cocover S$
\end{lem}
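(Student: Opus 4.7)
The plan is to prove the stronger statement $\ds{V} \setminus V = \ds{S} \setminus S$, from which $\cocover V = \cocover S$ follows immediately by taking maxima.

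First, I would show that $x \in V$. Since $x \in \cover S \subseteq \us{S} \setminus S$, there exists $s \in S$ with $s \leq x$ while $x \notin S$. I claim $x \notin \ds{S}$: otherwise some $s' \in S$ satisfies $x \leq s'$, so convexity of $S$ applied to $s \leq x \leq s'$ forces $x \in S$, a contradiction. Thus $x \in \ds{x} \setminus \ds{S}$, so $x \in V$, and consequently $\ds{x} \subseteq \ds{V}$.

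Next, $\ds{V} = \ds{S} \cup \ds{x}$: the inclusion $\supseteq$ is immediate from $S \subseteq V$ and $x \in V$, while $\subseteq$ follows from $V \subseteq S \cup \ds{x}$. Now I would compute $\ds{V} \setminus V$ by splitting: elements of $\ds{S} \setminus V$ are exactly $\ds{S} \setminus S$ (since $\ds{S}$ is disjoint from $\ds{x} \setminus \ds{S}$), while elements of $\ds{x} \setminus V$ can be rewritten using $\ds{x} \setminus (\ds{x} \setminus \ds{S}) = \ds{x} \cap \ds{S}$, giving $\ds{x} \cap (\ds{S} \setminus S) \subseteq \ds{S} \setminus S$. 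Combining the two gives $\ds{V} \setminus V = \ds{S} \setminus S$, and taking maxima yields $\cocover V = \cocover S$.

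No step looks like a serious obstacle here; the argument is pure set-theoretic bookkeeping once the key observation $x \notin \ds{S}$ (via convexity of $S$) is in hand.
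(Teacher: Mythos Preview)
Your proof is correct and follows essentially the same approach as the paper: both reduce the claim to the equality $\ds V \setminus V = \ds S \setminus S$, establish $\ds V = \ds S \cup \ds x$, and finish with the resulting set-theoretic computation. You simply spell out in detail the ``straightforward set theoretic argument'' that the paper leaves to the reader, including the observation $x \notin \ds S$ (via convexity of $S$), which the paper uses implicitly.
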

\begin{proof}
    By the dual of \cref{lemma:description-finitely-presented-spreads}, it is sufficient to show that $\ds V \setminus V = \ds S \setminus S$. 
    Since the downset operator commutes with unions, we have $\ds V = \ds S \sqcup \ds(\dsminus{x}{S}) = \ds S \cup \ds x$, so that
    \begin{align*}
        \ds V \setminus V & = \big(\ds S \cup \ds x \big) \setminus \big( S\sqcup (\dsminus{x}{S}) \big)\,.
    \end{align*}
    Now, a straightforward set theoretic argument shows that $\ds V \setminus V = \ds S \setminus S$, as required.
\end{proof}

\begin{eg}
    The spread $V = S\sqcup (\usminus{x}{S})$ in \cref{figure: spread lemmas} gives examples of \cref{lemma: cocover of spread minus cover,lem: min spread implies cover ds}.
    The cocovers of $S$ (also in \cref{figure: spread lemmas}) are also cocovers of $V$, as proven in \cref{lemma: cocover of spread minus cover}. 
    Furthermore, we have that $\ds{\cocover V} = \ds{\cocover S}$ contains $0=\min \cp$.
    The covers of $\ds{\cocover S} =\ds{\cocover V} $ are $v\sqcup \min S $, where $v \in \incomp S$ is the unique minimum of ${\dsminus{x}{S}}$.
\end{eg}

\begin{lem}
    \label{lem: min spread implies cover ds}
    Let $\cp$ be a finite poset and let $V \subseteq \cp$ be convex.
    If $\min\cp \subseteq \ds{\cocover V} $, then 
    $\minimal V \subseteq \cover{(\ds{\cocover{V}})}$.
\end{lem}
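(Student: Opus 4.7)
The plan is to unpack $\cover(\ds{\cocover V}) = \min(\us{\ds{\cocover V}} \setminus \ds{\cocover V})$ and, for a fixed $v \in \minimal V$, verify the three conditions: $v \notin \ds{\cocover V}$, $v \in \us{\ds{\cocover V}}$, and $v$ is minimal in $\us{\ds{\cocover V}} \setminus \ds{\cocover V}$.

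First I would show $v \notin \ds{\cocover V}$ using convexity of $V$: if $v \leq c$ for some $c \in \cocover V \subseteq \ds V \setminus V$, then there exists $u \in V$ with $c \leq u$, giving $v \leq c \leq u$ with $v, u \in V$, so $c \in V$ by convexity, contradicting $c \notin V$. Next, $v \in \us{\ds{\cocover V}}$ is immediate from the hypothesis: since $\cp$ is finite, there exists $m \in \minimal \cp$ with $m \leq v$, and $m \in \ds{\cocover V}$ by assumption.

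The remaining step is minimality: suppose $p \in \us{\ds{\cocover V}} \setminus \ds{\cocover V}$ with $p \leq v$; we must show $p = v$. The idea is to argue that $p \in V$, after which $p = v$ follows from $v \in \minimal V$. Indeed, if $p \notin V$, then since $p \leq v \in V$ we have $p \in \ds V \setminus V$, and by finiteness of $\cp$ there exists a maximal element $c$ of $\ds V \setminus V$ (that is, $c \in \cocover V$) with $p \leq c$, putting $p \in \ds{\cocover V}$, contradiction.

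I do not foresee a real obstacle here; the only slightly delicate point is the two appeals to finiteness of $\cp$ (to guarantee that every element of $V$ lies above some minimum, and that every element of $\ds V \setminus V$ lies below some maximum of $\ds V \setminus V$), but both are standard. The convexity argument in the first step, not the hypothesis $\minimal \cp \subseteq \ds{\cocover V}$, is what rules out $v$ being swallowed by $\ds{\cocover V}$ from above; the hypothesis only enters to ensure $v$ sits above something in $\ds{\cocover V}$ in the first place.
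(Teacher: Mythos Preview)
Your proof is correct. Each of the three verifications is clean: convexity of $V$ rules out $v \in \ds{\cocover V}$, the hypothesis together with finiteness gives $v \in \us{\ds{\cocover V}}$, and the minimality step correctly uses that any $p \leq v$ with $p \notin V$ lies in $\ds V \setminus V$ and hence below some cocover.

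The paper takes a different, more global route. It sets $Z = \ds{\cocover V}$, uses the hypothesis to conclude $\us Z = \cp$ (since $\min\cp \subseteq Z$ forces $\cp = \us{\min\cp} \subseteq \us Z$), and then invokes the decomposition $\cp = V \sqcup \us{\cover V} \sqcup \ds{\cocover V} \sqcup \incomp V$ from \cref{cor: cover iff min upminus} to identify $\cp \setminus Z = \us V \sqcup \incomp V$. From this it extracts the stronger equality $\cover Z = \minimal V \sqcup \minimal(\incomp V)$, of which the lemma is a corollary. Your argument is more elementary and self-contained (it does not need \cref{cor: cover iff min upminus}), but it proves only the stated inclusion; the paper's approach costs an appeal to the decomposition lemma but yields an explicit description of all covers of $\ds{\cocover V}$, not just those coming from $\minimal V$.
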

\begin{proof}
Let $Z \coloneqq \ds{\cocover{V}}$, so that
$\cover{Z} = \minimal (\us Z \setminus Z)$, by definition of cover. 
Since $\min\cp \subseteq Z$, we have $\cp= \us{\min\cp} \subseteq \us{Z}$, so $\us Z \setminus Z = \cp \setminus Z= \cp\setminus \ds{\cocover{V}} $.
To conclude, note that
\[
    \cover{(\ds{\cocover{V}})} = \min (\us Z\setminus Z) = \minimal (\cp\setminus \ds{\cocover{V}}) = \minimal(\incomp V) \sqcup \minimal V\,,
\]
where in the last equality we used that $\cp\setminus \ds{\cocover{V}}= (\incomp V) \sqcup \us{V}$, by \cref{cor: cover iff min upminus}.
\end{proof}
\begin{comment}
\begin{proof}
Let $Y \coloneqq (\us{\ds{\cocover{V}}}) \setminus \ds{\cocover{V}} $, so that
$\cover{(\ds{\cocover V})} = \minimal (Y)$, by definition of cover. 
Since $\min\cp \subseteq \ds{\cocover V}$, we have $\cp= \us{\min\cp} \subseteq \us{\ds{\cocover V}}$, so $Y = \cp\setminus \ds{\cocover{V}} = \cp \setminus \left(\ds{V} \setminus V\right)$.
To conclude, note that
\[
    \cover{(\ds{\cocover{V}})} = \min Y = \minimal (\cp\setminus \ds{\cocover{V}}) = \minimal(\incomp V) \sqcup \minimal V\,,
\]
where in the last equality we used that $Y= (\incomp V) \sqcup \us{V}$, by \cref{cor: cover iff min upminus}.
\end{proof}
\end{comment}

\begin{lem}\label{lem: cocover vs connected}
    Let $\cp$ be a finite poset, let $W \subseteq \cp$ be a spread, and let $a \in \ds W \setminus W$.
    We have $a \in \cocover W$ if and only if $W$ is a connected component of $W \sqcup \left((\usminus{a}{W}) \setminus a\right)$.
\end{lem}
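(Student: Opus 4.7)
The plan is to prove the biconditional directly, by contradiction in each direction, working just from the definitions of $\cocover$, connected component, and convexity of a spread. Throughout, let $U \coloneqq W \sqcup \left((\usminus{a}{W}) \setminus a\right)$. The union is genuinely disjoint since $W \subseteq \us W$ forces $W \cap (\us a \setminus \us W) = \emptyset$. I also note the auxiliary fact $a \notin \us W$: otherwise $w \leq a \leq w'$ for some $w, w' \in W$ would force $a \in W$ by convexity, contradicting $a \in \ds W \setminus W$.

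For the forward direction, assuming $a \in \cocover W$, I would verify that $W$ is a connected component of $U$ by checking maximality (connectivity is automatic since $W$ is a spread). If $W$ were not maximal, some $x \in (\usminus{a}{W}) \setminus a$ would lie in the same connected component of $U$ as $W$, and so a zigzag in $U$ from $x$ to an element of $W$ would contain a pair of comparable consecutive elements $u_1 \in W$, $u_2 \in (\usminus{a}{W}) \setminus a$. Each comparison produces a contradiction: if $u_1 \leq u_2$ then $u_2 \in \us W$, contradicting $u_2 \in \us a \setminus \us W$; if $u_2 \leq u_1$ then $a \leq u_2 \leq u_1 \in W$ with $u_2 \neq a$ places $u_2$ strictly above $a$ in $\ds W \setminus W$, contradicting the maximality of $a$.

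For the backward direction, assuming $W$ is a connected component of $U$, I would suppose toward contradiction that some $a' \in \ds W \setminus W$ satisfies $a' > a$. Pick $w_0 \in W$ with $a' \leq w_0$. Convexity of $W$ rules out $a' \in \us W$ (else $w'' \leq a' \leq w_0$ would force $a' \in W$), so $a' \in (\usminus{a}{W}) \setminus a \subseteq U$. But then $W \cup \{a'\}$ is a connected subset of $U$ strictly larger than $W$ (via $a' \leq w_0$), contradicting that $W$ is a connected component.

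I do not expect a serious obstacle. The only mildly delicate point is the crossing-edge analysis in the forward direction, where one must track carefully which of the sets $\us W$, $\us a$, and $\ds W$ each side of the crossing edge belongs to; once that bookkeeping is done, both cases collapse to immediate contradictions.
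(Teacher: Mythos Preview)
Your proof is correct and follows essentially the same approach as the paper's own argument: in both directions the crux is locating a comparable pair crossing between $W$ and $(\usminus{a}{W})\setminus a$ and contradicting either $u_2\notin\us W$ or the maximality of $a$ in $\ds W\setminus W$. The only cosmetic difference is in the backward direction, where the paper picks a cocover $p\geq a$ and shows $p=a$, whereas you directly negate maximality by postulating $a'>a$; these are equivalent reformulations.
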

\begin{proof}
    \noindent ($\Rightarrow$)
    Since $W$ is a spread, it is connected by assumption, so it suffices to prove that there is no zigzag path between an element of $W$ and an element of $(\usminus{a}{W}) \setminus a$ entirely contained in $W \sqcup \left((\usminus{a}{W}) \setminus a\right)$.
    Towards a contradiction, assume that there is a zigzag path between an element $(\usminus{a}{W}) \setminus a$ and an element of $W$, so that there is a $s \in (\usminus{a}{W}) \setminus a$ and $w \in W$ that are comparable.
    If $w\leq s$, then $s \in \us W$, which contradicts the fact that $s \notin W$.
    If $s \leq w$, then,
    using that $s \in (\usminus{a}{W}) \setminus a$, we get that
    $a < s \leq s$.
    Thus $s = w \in W$, since $a\in\cocover{W} = \max(\ds W\setminus W)$, which again contradicts the fact that $s \notin W$.

    \smallskip

    \noindent
    ($\Leftarrow$) By assumption, we have $a\in \ds W\setminus W$, so there exists some $p\in \max(\ds W\setminus W) = \cocover W$, with $a\leq p$.
    Since $p \notin W$, we have $p \in \usminus{a}{W}$, and since $p\in \ds W \setminus W$, there exists $w \in W$ with $p \leq w$.
    If $a \neq p$, then $p \in \left(\usminus{a}{W}\right) \setminus a$, and thus $p \leq w$ would be a zigzag path between an element of $(\usminus{a}{W}) \setminus a$ and an element of $W$, which contradicts the assumption that $W$ is a connected component.
    It follows that $a = p$, which is a cocover of $W$, by assumption.
\end{proof}

\begin{eg}
    The spread $W = S\setminus \iprod{a}{s}$ in \cref{figure: spread lemmas} gives an example of \cref{lem: cocover vs connected}.
    The element $a \in \min S$ is a cocover of $W$.
    We have $\iprod{a}{s} = \usminus{a}{W}$ and $S = W \sqcup \usminus{a}{W}$.
    Note that every zigzag in $S$ from $u$ to $s$ must pass through $a$.
    Thus $W$ is a connected component of $S\setminus a$.
\end{eg}

\bibliography{bib}
\bibliographystyle{alpha}

\end{document}